\newtheorem{theorem}{Theorem}[section]
\newtheorem{lemma}[theorem]{Lemma}
\newtheorem{corollary}[theorem]{Corollary}
\newtheorem{proposition}[theorem]{Proposition}
\theoremstyle{definition}
\newtheorem{definition}[theorem]{Definition}
\theoremstyle{remark}
\newtheorem{remark}[theorem]{Remark}
\numberwithin{equation}{section}
\begin{document}
	
\title{Blow-up solutions for $L^2$-supercritical gKdV equations with exactly $k$ blow-up points}\def\rightmark{BLOW-UP SOLUTIONS FOR GKDV EQUATIONS WITH K BLOW-UP POINTS}
	
\author{Yang LAN}
	
\address{Institut des Hautes \'{E}tudes Scientifiques, Bures-sur-Yvette, France}
	
\email{yanglan@ihes.fr}
	
\keywords{KdV, supercritical, blow-up, multiple blow-up points, topological argument}

\begin{abstract}
In this paper we consider the slightly $L^2$-supercritical gKdV equations $\partial_t u+(u_{xx}+u|u|^{p-1})_x=0$, with the nonlinearity $5<p<5+\varepsilon$ and $0<\varepsilon\ll 1$. In the previous work of the author, we know that there exists a stable self-similar blow-up dynamics for slightly $L^2$-supercritical gKdV equations. Such solution can be viewed as solutions with single blow-up point. In this paper we will prove the existence of solutions with multiple blow-up points, and give a description of the formation of the singularity near the blow-up time.
\end{abstract}
	
\maketitle

\section{Introduction}	
\subsection{Setting of the problem}
We consider the following gKdV equations:
\begin{equation}\label{CP}\begin{cases}
\partial_t u +(u_{xx}+u|u|^{p-1})_x=0, \quad (t,x)\in[0,T)\times\mathbb{R},\\
u(0,x)=u_0(x)\in H^1(\mathbb{R}),
\end{cases}
\end{equation}
with $1\leq p<+\infty$.
	
From the result of C. E. Kenig, G. Ponce and L. Vega \cite{KPV} and N. Strunk \cite{Stru}, \eqref{CP} is locally well-posed in $H^1$ and thus for all $ u_0\in H^1$, there exists a maximal lifetime $0<T\leq+\infty$ and a unique solution $u(t,x)\in C([0,T), H^1(\mathbb{R}))$ to \eqref{CP}. Besides, we have the blow-up criterion: either $T=+\infty$ or $T<+\infty$ and $\lim_{t\rightarrow T}\|u_x(t)\|_{L^2}=+\infty$.
	
\eqref{CP} admits two conservation laws, i.e. the mass and energy:
\begin{gather*}
M(u(t))=\int |u(t,x)|^2 \,dx=M(u(0)),\\
E(u(t))=\frac{1}{2}\int |u(t,x)|^2 \,dx-\frac{1}{p+1}\int |u(t,x)|^{p+1} \,dx=E(u(0)).
\end{gather*}
	
{For all $ \lambda>0$, $u_{\lambda}(t,x)=\lambda^{\frac{2}{p-1}}u(\lambda^3t,\lambda x)$ is also a solution. Moreover, the $\dot{H}^{\sigma_c}$ norm of the initial data with the index:
\begin{equation}
\sigma_c=\frac{1}{2}-\frac{2}{p-1},
\end{equation}
is invariant under this scaling.}
	
We introduce the  ground state $\mathcal{Q}_p$, which is the unique radial nonnegative function with exponential decay at infinity to the following equation:
\begin{equation}
\mathcal{Q}_p''-\mathcal{Q}_p+\mathcal{Q}_p|\mathcal{Q}_p|^{p-1}=0.
\end{equation}
$\mathcal{Q}_p$ plays a distinguished role in the analysis. It provides a family of travelling wave solutions:
$$u(t,x)=\lambda^{\frac{2}{p-1}}\mathcal{Q}_p(\lambda(x-\lambda^2 t-x_0)),\quad (\lambda,x_0)\in\mathbb{R}_{+}^*\times\mathbb{R}.$$
	
For $p<5$ or equivalently $\sigma_c<0$, \eqref{CP} is called $L^2$ subcritical. The mass and energy conservation laws imply that the solution is always global and bounded in $H^1$.
	
{For $p=5$, the solution is called $L^2$ critical. From variation arguments \cite{W1}, we know that if $\|u_0\|_{L^2}<\|\mathcal{Q}_5\|_{L^2}$, then the solution to \eqref{CP} is always global and bounded in $H^1$. 
	
While for $\|u_0\|_{L^2}\geq\|\mathcal{Q}_5\|_{L^2}$, blow up may occurs. The blow up dynamics for solution with slightly supercritical mass:
\begin{equation}
\|\mathcal{Q}_5\|_{L^2}<\|u_0\|_{L^2}<\|\mathcal{Q}_5\|_{L^2}+\alpha^*
\end{equation}
has been developed in a series paper of Martel and Merle \cite{MM4,MM5,MM3,M1}. In particular, they obtain the existence of blow up solutions with negative energy, and the classification of the ground state $\mathcal{Q}_5$ as the unique global attractor for blow up solutions in $H^1$. 

In \cite{MMNR,MMR1,MMR2,MMR3}, Martel, Merle, Nakanishi and Rapha\"{e}l, give a comprehensive study of the asymptotic dynamics near the ground state: classification of the flow near soliton; existence of the minimal mass blow up solutions; exotic blow up regime; condimension $1$ threshold manifold for unstable regime.
    
\subsection{On the supercritical problems}Let us consider the focusing $L^2$ supercritical nonlinear Schr\"{o}dinger equations (NLS)
\begin{equation}\label{NLS}\tag{\rm NLS}
\begin{cases}
i\partial_t u +\Delta u+u|u|^{p-1}=0, \quad (t,x)\in[0,T)\times\mathbb{R}^d,\\
u(0,x)=u_0(x)\in H^1(\mathbb{R}^d),
\end{cases}
\end{equation}
with $1+\frac{4}{d}<p<+\infty$. 
The blow up dynamics for supercritical NLS is mostly open. Only until recently, a few special examples are known. From \cite{MRS1,R1,RS2}, there exist blow-up solutions with $\log$-$\log$ blow up rate for $p=5$ and $d\geq2$ with radial initial data. From \cite{HR,Zw}, there exist blow-up solutions with cylindrically symmetry blowing up at $\log$-$\log$ blow up rate for $p=3$ and $d\geq3$. In cite In \cite{MRS2}, Merle, Rapha\"{e}l and Szeftel construct a stable self-similar blow up dynamics for slightly supercritical nonlinearity in low dimensions (i.e. $d\leq 5$).
    
For supercritical gKdV equations the existence of blow up solution in energy space $H^1$ has been a long standing open problem. Numerical simulation \cite{DM} suggests the existence of self-similar blow up solution in the slightly $L^2$ supercritical case%
\footnote{From \cite{MM3}, there is no self-similar blow up solutions for $L^2$ critical gKdV with slightly supercritical mass.}
, where a self similar solution is a solution of the following form:
$$u(t,x)\sim\frac{1}{[3b(T-t)]^{\frac{2}{3(p-1)}}}V_b\bigg(\frac{x}{\sqrt[3]{3b(T-t)}}\bigg),$$
where $b>0$. Direct computation shows that $V_b$ should be a solution to the following ODE%
\footnote{See Section 1.6 for the definition of ``$\Lambda$".}%
:
\begin{equation}
\label{SSE}
b\Lambda V_b+(V_b''-V_b+V_b|V_b|^{p-1})'=0.
\end{equation}

The exact solution of \eqref{SSE} in slightly supercritical case has been constructed by H. Koch \cite{K}. It is related to an eigenvalue problem, i.e. for all $p>5$ close enough to $5$, there exists a unique $b=b(p)>0$ such that a unique solution $V_b$ to \eqref{SSE} can be found. Hence, this $V_b$ leads to a self-similar blow up solution to \eqref{CP} directly. 

But unfortunately, the exact solution $V_b$ constructed in \cite{K}, has a slowly decaying tail:
$$V_b(y)\sim\frac{1}{|y|^{\frac{1}{2}-\sigma_c}},\quad \text{as }|y|\rightarrow +\infty.$$

Thus, $V_b$ belongs to $L^{p+1}\cap \dot{H}^1$, but always misses the critical Sobolev space $\dot{H}^{\sigma_c}$ (hence $L^2$), which makes it impossible to obtain a stability result for the exact self-similar blow up solution. Since, for typical Cauchy problem like \eqref{CP}, we can only expect a stability result in a Cauchy space, i.e. a space where local wellposedness holds. In our case, natural Cauchy spaces are the critical Sobolev space $\dot{H}^{\sigma_c}$ and the energy space $H^1$ from \cite{KPV}, while $V_b$ is not in neither of them. Hence, we cannot use the profile $V_b$ directly. 

Despite the slowly decaying tail of $V_b$, we can choose a suitable cut-off of $V_b$ as an
approximation, such that it is bounded in $L^2$ with exponential decay on the right. Based on this approximate self-similar profile, Lan \cite{L1} has construct a stable self-similar blow-up dynamics for slightly $L^2$ supercritical gKdV:}
\begin{theorem}[Existence and stability of a self-similar blow-up dynamics]\label{PT}
There exists a $p^*>5$ such that for all $p\in(5,p^*)$, there exist constants $\delta(p)>0$ and $b^*(p)>0$ with
\begin{gather}
\lim_{p\rightarrow5}\delta(p)=0\\
{b^*(p)=\frac{4\pi^2}{\Gamma(1/4)^4}(p-5)+O(|p-5|^2), \text{ as }p\rightarrow5}
\end{gather}
and a nonempty open subset $\mathcal{O}_p$ in $H^1$ such that the following holds. If $u_0\in \mathcal{O}_p$, then the corresponding solution to \eqref{CP} blows up in finite time $0<T<+\infty$, with the following dynamics: there exist geometrical parameters $(\lambda(t),x(t))\in\mathbb{R}_{+}^*\times\mathbb{R} $ and an error term $\varepsilon(t)$ such that:
\begin{equation}\label{11}
u(t,x)=\frac{1}{\lambda(t)^{\frac{2}{p-1}}}\big{[}\mathcal{Q}_p+\varepsilon(t)\big{]}\bigg{(}\frac{x-x(t)}{\lambda(t)}\bigg{)}
\end{equation}
with
\begin{equation}\label{12}
\|\varepsilon_{y}(t)\|_{L^2}\leq \delta(p).
\end{equation}
Moreover, we have:
\begin{enumerate}
\item The blow-up point converges at the blow-up time:
\begin{equation}\label{13}
x(t)\rightarrow x(T) \text{ as } t\rightarrow T,
\end{equation}
\item The blow-up speed is self-similar:
\begin{equation}\label{14}
\forall t\in[0,T),\quad (1-\delta(p))\sqrt[3]{3b^*(p)}\leq\frac{\lambda(t)}{\sqrt[3]{T-t}}\leq(1+\delta(p))\sqrt[3]{3b^*(p)}.
\end{equation}
\item The following convergence holds:
\begin{equation}\label{15}
\forall q\in[2,q_c), \quad u(t)\rightarrow u^*\text{ in $L^q$ as $t\rightarrow T$},
\end{equation}
where $q_c=\frac{p-1}{2}$.
\item The asymptotic profile $u^*$ displays the following singular behavior:
\begin{equation}\label{16}
\big{(}1-\delta(p)\big{)}\int \mathcal{Q}^2_p\leq \frac{1}{R^{2\sigma_c}}\int_{|x-x(T)|<R}|u^*|^2\leq \big{(}1+\delta(p)\big{)}\int \mathcal{Q}^2_p.
\end{equation}
for $R$ small enough.
In particular, we have for all $ q\geq q_c$:
$$u^*\notin L^q.$$
\end{enumerate}
\end{theorem}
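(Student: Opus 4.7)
My plan is to prove Theorem \ref{PT} by a modulated decomposition of the flow around a suitable approximate self-similar profile, closing a bootstrap argument that simultaneously controls the geometric parameters and a small remainder. Following Koch, one has an exact self-similar solution $V_b$ for a unique $b = b(p) > 0$ close to $b^*(p)$, but $V_b(y) \sim |y|^{-(1/2 - \sigma_c)}$ on the right, and so does not lie in $L^2$. I would fix this by introducing a truncated profile $Q_b$ that coincides with $V_b$ on a large interval $|y| \leq A$ and decays exponentially further out. Then $Q_b \in H^1$ satisfies \eqref{SSE} up to an error localized in the tail, quantitatively controlled by the smallness of $p - 5$.

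I would then decompose any solution with initial data in $\mathcal{O}_p$ as
\begin{equation*}
u(t,x) = \frac{1}{\lambda(t)^{2/(p-1)}}\big[Q_{b(t)} + \varepsilon(t)\big]\bigg(\frac{x - x(t)}{\lambda(t)}\bigg),
\end{equation*}
imposing three orthogonality conditions on $\varepsilon(t)$ associated with scaling, translation and the $b$-direction so that $(\lambda(t), x(t), b(t))$ is uniquely determined. Differentiating these orthogonalities yields modulation ODEs of the schematic form
\begin{equation*}
\bigg|\frac{\lambda_t}{\lambda} + \frac{b}{\lambda^2}\bigg| + \bigg|\frac{x_t}{\lambda} - \frac{1}{\lambda^2}\bigg| + |b_t| \lesssim \|\varepsilon_y\|_{L^2} + |b - b^*(p)| + \text{profile errors}.
\end{equation*}
The core of the proof is then a monotonicity estimate for a mixed energy-virial functional $\mathcal{F}(\varepsilon)$, equivalent to $\|\varepsilon_y\|_{L^2}^2$ under the chosen orthogonality, designed so that $\tfrac{d}{dt}\bigl(\mathcal{F}/\lambda^{2(1-2\sigma_c)}\bigr)$ is nonpositive up to profile errors. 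Coercivity of the linearized operator $L = -\partial_y^2 + 1 - p \mathcal{Q}_p^{p-1}$ on the orthogonality subspace, combined with a virial-type correction encoding the self-similar scaling, makes this monotonicity work. The main obstacle, I expect, is balancing the monotonicity against the errors created by the slowly-decaying tail of $V_b$: the truncation in $Q_b$ and the dispersive transport on the right of the soliton must be handled via sharp pointwise bounds so that the sign of the dissipation is preserved.

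Given the monotonicity, the bootstrap closes and $\|\varepsilon_y\|_{L^2}$, $|b - b^*(p)|$ stay small as long as $\lambda(t)$ is bounded away from zero. The ODE $\lambda_t/\lambda = -b/\lambda^2 + o(\lambda^{-2})$ with $b \to b^*(p)$ integrates to $\lambda(t)^3 \sim 3 b^*(p)(T - t)$, yielding \eqref{14} and finite-time blow-up at some $T < \infty$; \eqref{13} then follows since $|x_t| \lesssim 1/\lambda$ is integrable on $[0,T)$. For \eqref{15} and \eqref{16}, I would combine monotonicity estimates on translates (à la Martel–Merle) to show that $u(t)$ stays $H^1$-bounded on any fixed region not containing $x(T)$, giving strong $L^q$-convergence to some $u^*$ for $q < q_c$. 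The two-sided concentration bound \eqref{16} is then obtained by rescaling: at scale $\lambda(t) \sim (T-t)^{1/3}$ around $x(t)$ the solution is $L^2$-close to $\mathcal{Q}_p$, and passing to the limit with the help of mass conservation converts this into the desired bound on $R^{-2\sigma_c}\int_{|x - x(T)| < R}|u^*|^2$. The $u^* \notin L^q$ statement for $q \geq q_c$ is then a direct scaling consequence of the lower bound in \eqref{16}.
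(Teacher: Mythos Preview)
This theorem is not proved in the present paper: it is quoted from the author's earlier work \cite{L1}, and the paper only sketches the underlying mechanism (construction of the localized profile $Q_b$ from Koch's exact self-similar solution, geometric decomposition with three orthogonality conditions, modulation estimates, and a monotonicity/Lyapunov functional to close a bootstrap). Your outline matches that strategy essentially point for point, so as a summary of the approach it is correct.

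One small correction worth flagging: the exact profile $V_b$ has its slow $|y|^{-(1/2-\sigma_c)}$ decay on the \emph{left} (for $y\to -\infty$, see \eqref{ASB}), not on the right as you wrote; on the right it is exponentially decaying. Correspondingly, the truncation in $Q_b$ is at scale $|y|\sim b_c^{-1}$ on both sides, and the main profile error $\Phi_b$ sits near $|b_c y|\in[1,2]$. This localisation detail matters when you set up the weighted monotonicity functional $\mathcal{F}$ (the weights $\psi_B,\zeta_B$ in the paper), since the dispersion carries mass to the left and the tail error must be absorbed there, not on the right.
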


\subsection{Blow up solution with multiple blow up points} {The existence of large blow up solution is mostly open. One way to construct such solution is to consider blow up solutions with multiple blow up points. There are several examples:
\begin{itemize}
	\item Merle \cite{M5} for $L^2$ critical NLS with conformal blow up rate;
	\item Planchon, Rapha\"{e}l \cite{PR} and Fan \cite{Fan} for $L^2$ critical NLS with $\log$-$\log$ blow up rate;
	\item Merle \cite{M4} and Merle, Zaag \cite{MZ} for nonlinear heat equation with ODE blow up rate.
\end{itemize}
Such constructions correspond to the weak interaction case, i.e. the interaction between the bubbles does not change the blow up rate of each bubble. There are also some examples for strongly interacting bubbles:
\begin{itemize}
	\item Martel, Rapha\"{e}l \cite{MarR} for $L^2$ critical NLS;
	\item Cort\'{a}zar, Del Pino, Musso \cite{CDM} for energy critical nonlinear heat equations in domain;
	\item Jendrej \cite{J1,J2} for focusing energy critical wave equations.
\end{itemize} }

The goal of this paper is to construct blow up solutions for slightly supercritical gKdV with multiple bubbles, and each bubble concentrates at a finite point. First, we need to give the definition of the {\it blow up point} for solution to \eqref{CP}.

\begin{definition}
Let $u(t)$ be a solution of \eqref{CP}, which blows up in finite time $T$. The \emph{blow-up set} of $u(t)$ is the set of all the points $z$ such that:
$$\liminf_{R\rightarrow0}\bigg{(}\liminf_{t\rightarrow T}\frac{1}{R^{2\sigma_c}}\int_{|x-z|<R}|u(t)|^2\bigg{)}\geq \frac{1}{2}\int\mathcal{Q}^2_p.$$
\end{definition}
\begin{remark}\label{symmetry}
From the definition, the blow-up set is ``invariant" under the symmetry of the equation. More precisely, consider a solution $u(t)$ of \eqref{CP}, which blows-up in finite time $T$ with blow-up set $B$. Then for all $\lambda_0>0$, $x_0\in\mathbb{R}$,
$$\bar{u}(t,x)=\frac{1}{\lambda_0^{\frac{2}{p-1}}}u\bigg(\frac{t}{\lambda_0^3},\frac{x-x_0}{\lambda_0}\bigg),$$
is still a solution to \eqref{CP}, which blows up in finite time $\bar{T}=\lambda_0^3 T$. Moreover, its blow-up set is exactly:
$$\bar{B}=\big\{\lambda_0x+x_0|x\in B\big\}.$$
\end{remark}
\begin{remark}
For all solution $u(t)$ mentioned in Theorem \ref{PT}, we {can} see from the proof of \eqref{16} in \cite{L1},
\begin{gather*}
\limsup_{R\rightarrow0}\lim_{t\rightarrow T}\frac{1}{R^{2\sigma_c}}\int_{|x-z_0|<R}|u(t)|^2\leq \delta(p), \quad\text{for all $z_0\not=x(T)$},\\
\frac{1}{R^{2\sigma_c}}\int_{|x-x(T)|<R}|u(t)|^2\sim\int\mathcal{Q}_p^2, \quad\text{for $R$ small enough}.
\end{gather*}
Therefore, the blow up set of $u(t)$ is exactly $\{x(T)\}$.
\end{remark}
	
\subsection{Statement of the main theorem}
\begin{theorem}[Main Theorem]\label{MT}
There exist universal constants $p^*>5$, $c>0$ such that for all $p\in(5,p^*)$, $k\in\mathbb{N}^+$, if $2\leq k\leq c|\log(p-5)|$, then for all $k$ pairwise distinct points $x_1,x_2,\ldots,x_k\in \mathbb{R}$, {there exists $u_0\in C_0^{\infty}$, such that the corresponding solution $u(t)$ of \eqref{CP}, blows up in finite time $T<+\infty$}. And for $t$ close to $T$, there exist scaling parameters $\lambda_j(t)\in \mathbb{R}^+$ and an error term $\tilde{u}(t,x)$ with
\begin{equation}\label{10001}
u(t,x)=\sum_{j=1}^k\frac{1}{\lambda^{\frac{2}{p-1}}_j(t)}\mathcal{Q}_p\bigg(\frac{x-x_j}{\lambda_j(t)}\bigg)+\tilde{u}(t,x).
\end{equation}
Here for all $j=1,\ldots,k$,  and $t$ close to $T$,
\begin{gather}
(1-\delta(p))\sqrt[3]{b^*(p)}\leq\frac{\lambda_j(t)}{\sqrt[3]{T-t}} \leq (1+\delta(p))\sqrt[3]{b^*(p)}, \label{10002}\\
\lambda_j(t)^{1-\sigma_c}\|\tilde{u}_x(t)\|_{L^2}\leq \delta(p),\label{10003}
\end{gather}
where $\delta(p)$ and $b^*(p)$ are the universal constants introduced in Theorem \ref{PT}. Hence the blow-up rate is self-similar, i.e.
{\begin{equation}
\frac{c_0(p)k}{(T-t)^{(1-\sigma_c)/3}}\leq\|u_x(t)\|_{L^2}\leq \frac{C_0(p)k}{(T-t)^{(1-\sigma_c)/3}},
\end{equation}}
for $t$ close to $T$. Here $0<c_0(p)<C_0(p)$ are two constants depending only on $p$.

Moreover, the blow-up set of $u(t)$ is exactly $\{x_1,x_2,\ldots,x_k\}$.
\end{theorem}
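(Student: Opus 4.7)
\emph{Overall strategy.} The plan is to look for a solution in the $k$-bubble form
\[
u(t,x) = \sum_{j=1}^k \frac{1}{\lambda_j(t)^{2/(p-1)}}\, Q_{b_j(t)}\!\left(\frac{x - y_j(t)}{\lambda_j(t)}\right) + \tilde u(t,x),
\]
where $Q_b$ denotes the cut-off approximate self-similar profile introduced in \cite{L1}, which has exponential decay on the right and quasi-compact support, and $(\lambda_j(t),b_j(t),y_j(t))_{j=1}^k$ are modulation parameters to be determined. Each bubble should enter the stable self-similar regime of Theorem~\ref{PT} with $\lambda_j(t)\sim\sqrt[3]{3b^*(p)(T-t)}$, $b_j(t)\to b^*(p)$, and $y_j(t)\to x_j$ as $t\to T$. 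Since the profiles $Q_b$ are sharply localized on the scale $\lambda_j\to 0$, the bubbles interact only weakly as soon as $\lambda_j\ll\min_{i\ne j}|x_i-x_j|$, which is automatic in the self-similar regime.

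\emph{Modulation and bootstrap.} I would impose a system of orthogonality conditions on $\tilde u$, localized near each bubble via cut-off functions $\chi_j$ adapted to the positions $y_j(t)$, so as to determine $(\lambda_j,b_j,y_j)$ uniquely. The resulting modulation equations decouple, up to interaction terms, into $k$ copies of the single-bubble system from \cite{L1}, each driving its bubble into the self-similar regime with $\lambda_j^2(\lambda_j)_t\sim -b^*(p)$. The error $\tilde u$ is then controlled by the mixed monotonicity/Lyapunov functional of \cite{L1} applied in a localized fashion around each bubble and summed over $j$, which, combined with a bootstrap argument, yields the bounds \eqref{10002}--\eqref{10003} on a time interval of the form $[t_*,T)$.

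\emph{Topological shooting.} Even though each single-bubble regime is stable, requiring the $k$ individual blow-up times to coincide imposes $k-1$ compatibility constraints, which I would enforce by a Brouwer-type argument. Pick $t_n\to T^-$; at time $t_n$ prescribe $k$-bubble ``final'' data parametrised by $\eta=(\eta_1,\dots,\eta_k)\in[-1,1]^k$ perturbing the individual scales $\lambda_j^{(n)}$ around the reference value $\sqrt[3]{3b^*(p)(T-t_n)}$. Solving \eqref{CP} backward to $t=0$ and forward to the blow-up times $T_j(\eta)$ via the modulation analysis above, the map $\eta\mapsto(T_1(\eta)-T,\dots,T_k(\eta)-T)$ is continuous and, by the monotone response of the $j$-th scaling ODE to $\eta_j$ at the boundary of $[-1,1]^k$, has non-trivial topological degree near $0$. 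Brouwer then supplies $\eta_*$ with $T_j(\eta_*)=T$ for all $j$. Extracting a weak limit of the resulting solutions $u_n$ as $n\to\infty$ (after a harmless mollification of the backward data at $t_n$) produces a $C_0^\infty$ Cauchy datum whose solution satisfies the decomposition \eqref{10001}. The lower and upper bounds on $\|u_x(t)\|_{L^2}$ and the identification of the blow-up set with $\{x_1,\dots,x_k\}$ then follow from \eqref{10002}--\eqref{10003} and the local mass concentration property \eqref{16}.

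\emph{Main difficulty.} The principal obstacle is the control of bubble interactions. The exact self-similar profile $V_b$ of \cite{K} has a slowly decaying tail $|y|^{-(1/2-\sigma_c)}$, so its cut-off $Q_b$ leaks a small amount of radiation, of size governed by a power of $(p-5)$, during each self-similar evolution; moreover, the gKdV flow transports this leak one-sidedly to the right, so that the tail of bubble $j$ eventually enters the support of any bubble $j'$ with $x_{j'}>x_j$. Each bubble therefore contributes a multiplicative factor of the form $1+O((p-5)^\alpha)$ to the accumulated error, and absorbing this compounded term into the monotonicity functional of \cite{L1} is exactly what forces $k\le c|\log(p-5)|$. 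Making this bookkeeping quantitative, uniformly in $k$ and in the configuration of the $x_j$'s, is where the most delicate work will lie.
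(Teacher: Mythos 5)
Your overall ansatz (a sum of $k$ localized profiles $Q_{b_j}$ with modulation, bootstrap, and a Brouwer argument to synchronize the $k$ blow-up times) matches the paper's strategy in spirit, but your implementation of the topological step contains a gap that the paper explicitly avoids. You propose to prescribe the $k$-bubble data at times $t_n\to T^-$ and then \emph{solve \eqref{CP} backward to $t=0$}, extracting a weak limit as $n\to\infty$. For gKdV this does not work: all the quantitative tools available here (the modulation estimates, the Kato-type localized energy identity \eqref{39}, and the Lyapunov functional $\mathcal{F}_j$ of Proposition \ref{MF}) are \emph{monotone in one time direction only}, reflecting the one-sided smoothing of the Airy flow, so the backward evolution from $t_n$ cannot be controlled and the weak-limit extraction has no uniform bounds to rest on. (This is exactly why the paper takes the second approach described in its Comment 3: one works \emph{forward} from $t=0$ and runs the Brouwer/shooting argument on the \emph{initial} scaling parameters $\lambda_{2,0},\dots,\lambda_{k,0}$, via the nested continuous functions $F_j$ of Lemma \ref{212}, to force $\lambda_i(t)/\lambda_j(t)$ to stay in $[2^{-(k+1)},2^{k+1}]$ for all forward time.) Relatedly, you have the direction of the radiation transport backwards: the leaked tail of $Q_b$ and the dispersive radiation travel to the \emph{left} of each bubble (this leftward transport is precisely what makes the forward-in-time monotonicity work), not to the right as you assert, so your heuristic for the constraint $k\le c|\log(p-5)|$ is off; in the paper that constraint comes from factors like $10^k$ or $6^{k+1}$ appearing when global norms of the error are summed over $k$ bubbles, which must be dominated by a small power $b_c^{-\nu/4}\sim (p-5)^{-\nu/4}$.

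Two further points you omit. First, prescribing the blow-up set to be \emph{exactly} $\{x_1,\dots,x_k\}$ requires a second, separate topological argument: the translation parameters drift by $|x_j(0)-x_j(T)|\lesssim b_c^{-1}$, which is large, so one cannot simply center the initial bubbles at the $x_j$; the paper runs a degree argument (Lemma \ref{TL}) on the map $(x_{1,0},\dots,x_{k,0})\mapsto(x_1(T),\dots,x_k(T))$ over cubes of side $b_c^{-3}$, using continuity of the blow-up points with respect to the data, and then a scaling reduction (Remark \ref{symmetry}) to pass from well-separated to arbitrary configurations. Second, in the multi-bubble case the error must be controlled on the long intervals \emph{between} the bubbles, which forces $H^2$ initial data and a pseudo-conservation law for $\int u_{xx}^2$ (see the proof of \eqref{311}); your purely $H^1$-localized Lyapunov bookkeeping does not see this region. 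Without a forward-only formulation of your shooting argument and without these two ingredients, the proof as proposed does not close.
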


\noindent{\it Comments on Theorem \ref{MT}:}

{{\it 1. Large blow up solutions.} For solutions constructed in Theorem \ref{MT}, we know from the proof of Theorem \ref{MT} that $\|u_0\|_{L^{q_c}}\sim k\|\mathcal{Q}_p\|_{L^{q_c}}$. For $p$ close enough to $5$, $c|\log(p-5)|$ is large, thus we prove the existence of blow up solutions with large initial data (i.e. the critical Lebesgue norm is comparable to $|\log(p-5)|$) for slightly supercritical gKdV equations.

{\it 2. Higher regularity for multiple bubble blow up solutions.} {The initial data of the solutions constructed in Theorem \ref{MT} is in $C_0^{\infty}$. However, the results of Theorem \ref{MT} hold true for $u_0\in \mathcal{O}_{k,p}\subset H^2$, where $\mathcal{O}_{k,p}$ is an infinite subset in $H^2$ containing functions which are not in $C_0^{\infty}$.}  Here, we will see in Section 3.1, in the multiple bubble case (i.e. $k\geq2$), the minimal requirement on the regularity of the initial data is $u_0\in H^2$. This is in contrast with the single bubble case, where $H^1$ is enough for the analysis%
\footnote{From Theorem \ref{PT}, in the single bubble case, the initial data set is a nonempty open subset in $H^1$, hence containing functions which are not in $H^2$.}%
. For the multiple bubble case, the $H^2$ regularity on the initial data is used to control the error term between the blow-up points%
\footnote{See the proof of \eqref{311}.}%
.

{\it 3. On the multiple bubble problem.} There are two approach to this kind of problem:\\
- The first approach is that we work from the the asymptotic expansion and move the time backward to obtain a suitable initial data. This approach is suitable when estimates are reversible (do not depend on whether the time move back or forward). Typical example for this approach is \cite{M5} for $L^2$ critical NLS.\\
- The second approach is that we work directly from the initial data to achieved the result. Since we have to deal with some instability directions (for example the translation of the blow-up point), we will need to adjust some finite dimensional parameters to have the right initial data (the so-called \emph{topological argument}). These approach is suitable when the estimates we are dealing with are only for time moving forward, due to a parabolic effect. We conclude the proof by adjusting the finite dimensional parameters using a Brouwer type theorem. There are several examples of this approach: \cite{Fan,MarR,M4,MZ} for multiple bubble blow up solutions, \cite{CMM,KMR,MMT1,MMT} for multi-soliton solutions.

While for supercritical gKdV, the estimates we are dealing with are only for time moving forward due to a hidden parabolic nature of the Airy equation, so it is natural to take the second approach. We mention here, due to the use of a topological argument, the solutions obtained in Theorem \ref{MT} is not expected to be stable.

{\it 4. Blow up speed.} The blow up solution constructed in Theorem \ref{MT} corresponds to the weak interaction case, i.e. the blow up speed is still self-similar, same as the single bubble case. The existence of blow up solution to supercritical gKdV with blow up rate other than self-similar still remains open.}

\subsection{Outline of the proof}
The main idea in this paper is to construct a solution which behaves like a decoupled sum of $k$ self-similar blow-up solutions constructed in Theorem \ref{PT}. To do this, we start with a nonempty open subset of initial data $\mathcal{U}_{k,p}\subset H^2$, consisting of $H^2$ functions which can be written as a decoupled sum of bubbles. Then we establish the geometrical decomposition and the modulation estimates for the corresponding solutions just like what we do in \cite{L1}. Next we use a topological argument to show that there exists a nonempty subset $\mathcal{O}_{k,p}\subset\mathcal{U}_{k,p}$, such that the corresponding solution has exactly $k$ blow-up points. { Here for technical reasons, we have to assume that the distance between the blow-up points is large.} Finally, by another topological argument and { a standard argument of scaling}, we can show that the blow-up points can be chosen arbitrarily. To be specific, we have the following steps:
\subsubsection{Geometrical decomposition and modulation estimate (Section 2)} { We start with initial data which can be written as a decoupled sum of bubbles plus a small error term, i.e.
\begin{equation}
\label{FD}
u_0(x)=\sum_{j=1}^k\frac{1}{\lambda_{j,0}^{\frac{2}{p-1}}}Q_{b_{j,0}}\bigg(\frac{x-x_{j,0}}{\lambda_{j,0}}\bigg)+\tilde{u}_0(x),
\end{equation}
where $Q_b$ is the self-similar profile constructed in \cite{L1}. Moreover, we assume that 
\begin{equation}\label{LDI}
\min_{1\leq i\not=j\leq k}|x_{i,0}-x_{j,0}|\geq b_c^{-100},
\end{equation}
where $b_c$ is some universal constant with $b_c\sim p-5>0.$}

We then apply the standard argument of implicit function theory to establish the following geometrical decomposition on some time interval $[0,T^*)$:
\begin{equation}\label{101}
u(t,x)=\sum_{i=1}^{k}\frac{1}{\lambda_i(t)^{\frac{2}{p-1}}}Q_{b_i(t)}\bigg{(}\frac{x-x_i(t)}{\lambda_i(t)}\bigg{)}+\tilde{u}(t,x),
\end{equation}
with some orthogonality conditions on the error term $\tilde{u}$. Here the formal ODE system of the parameters $(\lambda_j(t),b_j(t),x_j(t))$ is 
\begin{equation}
\label{FODE}
\begin{cases}
\frac{1}{\lambda_j}\frac{d\lambda_j}{ds_j}+b_j=0,\\
\frac{1}{\lambda_j}\frac{dx_j}{ds_j}-1=0,\\
\frac{db_j}{ds_j}=0,
\end{cases}
\quad\text{for all }j=1,\ldots,k.
\end{equation}
	
Following from similar arguments as in \cite{L1} and \cite{MMR1}, we can show the following modulation estimates (which can be viewed as an approximation of \eqref{FODE}) hold:
\begin{gather}
\bigg|\frac{1}{\lambda_j}\frac{d\lambda_j}{ds_j}+b_j\bigg|+\bigg|\frac{1}{\lambda_j}\frac{dx_j}{ds_j}-1\bigg|\lesssim b_c^2+\|\varepsilon_j\|_{H^1_{\rm loc}},\label{102}\\
\bigg|\frac{db_j}{ds_j}+c_p(b_j-b_c)b_c\bigg|\lesssim b_c^{\frac{5}{2}}+b_c\|\varepsilon_j\|_{H^1_{\rm loc}},\label{103}\\
s_j(t)=\int_0^t\frac{1}{\lambda^3_j(\tau)}\,d\tau,\quad \varepsilon_j(t,y)=\lambda_j^{\frac{2}{p-1}}(t)\tilde{u}\big(t,\lambda_j(t)y+x_j(t)\big),\nonumber
\end{gather}
for all $1\leq j\leq k$, as along as the geometrical decomposition \eqref{101} holds. 
	
\subsubsection{Construction of the initial data set (Section 2)} 
{In Section 2, we will construct the set of initial data which leads to multiple bubble blow-up solutions. We start from the formal ODE system \eqref{FODE}. We assume that $b_j(0)>0$ for all%
\footnote{This is ensured by the choice of the open set $\mathcal{U}_{k,p}$.}
$j=1,\ldots,k$. We can see the solution to \eqref{FODE} is:
$$\lambda_j(t)=\sqrt[3]{3b_j(0)(T_j-t)},\quad b_j(t)=b_j(0),$$
for all $j=1,\ldots,k$ and for all $t<\min_{j}T_j$, where
$$T_j=\frac{\lambda_j^3(0)}{3b_j(0)}>0.$$

Clearly, the solution has multiple blow-up points if and only if $T_1=\ldots=T_k$, or equivalently
\begin{equation}
\label{IC}
\frac{\lambda_1^3(0)}{3b_1(0)}=\cdots=\frac{\lambda_k^3(0)}{3b_k(0)}.
\end{equation}
That is to say a special condition on the initial data is needed to ensure the solution has exactly $k$ blow-up points. We can also see that the condition \eqref{IC} is not stable under small perturbation in $H^2$.

While for the approximation system \eqref{102} and \eqref{103}, it seems hard to find an explicit description (similar as \eqref{IC}) of the corresponding condition on the initial data. However, we may still use a standard topological argument to show the existence of such a condition. 

More precisely, we can find an infinite subset $\mathcal{O}_{k,p}\subset\mathcal{U}_{k,p}$ such that
\begin{equation}
\label{MBU}
\frac{1}{2^{k+1}}\leq \frac{\lambda_{i}(t)}{\lambda_j(t)}\leq 2^{k+1},
\end{equation}
for all $j=1,\ldots,k$ as long as the geometrical decomposition holds. {Here $\mathcal{O}_{k,p}$ is defined as following%
\footnote{See Section 2.4 for the definition of ``admissible".}
:
\begin{align*}
\mathcal{O}_{k,p}=\bigg\{&u_0=\frac{1}{\lambda_{1,0}^{\frac{2}{p-1}}}Q_{b_{1,0}}\bigg(\frac{x-x_{1,0}}{\lambda_{1,0}}\bigg)+\sum_{j=2}^k\frac{1}{(\lambda^*_{j,0})^{\frac{2}{p-1}}}Q_{b_{j,0}}\bigg(\frac{x-x_{j,0}}{\lambda^*_{j,0}}\bigg)+\tilde{u}_0(x),\\
&\qquad\Big|(\vec{b}_0,\vec{x}_0,\tilde{u}_0,\lambda_{1,0})\text{ is admissible.}\bigg\}
\end{align*}
where $\{\lambda_{j,0}^*\}_{j=2}^k$ depend continuously on $(\vec{b}_0,\vec{x}_0,\tilde{u}_0,\lambda_{1,0})$. Here $\tilde{u}_0$ is an $H^2$ function which is only assume to be small%
\footnote{Hence, one may chose $\tilde{u}_0\in C^{\infty}_0$, so that we have $u_0\in C^{\infty}_0$.}%
. }This is done by a standard topological argument%
\footnote{See Lemma \ref{Lemma21} for more details.}%
. 

From \eqref{MBU}, we can see that if the solution blows up in finite time, and the geometrical decomposition holds for all time, then the solution has exactly $k$ blow-up points. Hence, a good control on the error term is required.}

\subsubsection{Estimates on $\varepsilon_j$ by using monotonicity tools (Section 3).}{In Section 3, we will derive some crucial control of the error term $\varepsilon_j$, for all $1\leq j\leq k$. More precisely, for all%
\footnote{See Section 2.3 for the definition of $s_j^*$.}
 $s_j\in[0,s_j^*)$,
\begin{gather}
\int_{\kappa B<y<b_c^{-20}}\big(\varepsilon_j(s_j)\big)_y^2\lesssim b_c^{\frac{55}{7}},\label{104}\\
\frac{d}{ds_j}\mathcal{F}_j+\frac{\mu}{B}\|\varepsilon_j\|_{H^1_{\rm loc}}^2\lesssim b_c^{\frac{7}{2}},\label{105}
\end{gather}
where $\kappa,\mu>0$ are some universal constants, $B=b_c^{-1/20}$ is a large constant and 
$$\mathcal{F}_j= \int\bigg{[}(\varepsilon_j)_y^2\psi_B+\varepsilon_j^2\zeta_B-\frac{2}{p+1}\big{(}|\varepsilon_j+Q_{b_j}|^{p+1}-Q_{b_j}^{p+1}-(p+1)\varepsilon_j Q_{b_j}^p\big{)}\psi_B\bigg{]},$$
for some well chosen weight function $(\psi_B,\zeta_B)$.
	S
The derivation of these estimates follows from almost the same strategy and computation as in \cite[Section 4, Section 5]{L1}, which is developed originally in \cite{MM3} and \cite{MMR1}. The key observation is that the interaction of the bubbles:
$$\frac{1}{\lambda^{\frac{2}{p-1}}_j(t)}Q_{b_j(t)}\bigg(\frac{x-x_j(t)}{\lambda_j(t)}\bigg),$$
is extremely small due to the assumption of \eqref{LDI}. For all $j=1,\ldots,k$, we may ignore the bubbles with an index other than $j$, due to the choice of the weigh function. Then the estimate of the error term is exactly the same to the single blow-up point case. }
	
There are only two different things. One is that we need the $H^2$ assumption to estimate $\varepsilon_j$ on the interval between the blow-up points (clearly, there is no such interval in the single blow-up point case). The other one is that the error term $\tilde{u}$ behaves like a sum of $k$ error terms introduced in \cite{L1}. So if $k$ is too large, we cannot obtain the smallness of any global norm%
\footnote{For example, \eqref{BB5} and \eqref{BB6}.}
 of $\varepsilon_j$. That's why we need to add a restriction on $k$.
	
\subsubsection{End of the proof (Section 4 and Section 5).}{ Following from similar argument as in \cite[Section 6.1]{L1}, the modulation estimates \eqref{102}, \eqref{103} and the estimates on the error term obtained in Section 3, we can see that for $u_0\in\mathcal{O}_{k,p}$, the corresponding solution blows up in finite time $T$. We will also see that the translation parameters $\{x_1(t),\ldots,x_k(t)\}$ converge to $k$ pairwise distinct points $\{x_1(T),\ldots,x_k(T)\}$ as $t\rightarrow T$. Moreover, the blow-up set is exactly $\{x_1(T),\ldots,x_k(T)\}$.

Hence, we have already constructed solutions blow-up in finite time with exactly $k$ blow-up points, where the distance between the blow-up points is large%
\footnote{See \eqref{BB3} for more details.}%
. Then we can show that Theorem \ref{MT} follows from Proposition \ref{prop1} by standard arguments. 

Indeed for $k$ given pairwise distinct points $\{x_1,\ldots,x_k\}$, we first assume that the distance between them is large enough, i.e.
$$\min_{1\leq i\not=j\leq k}|x_i-x_j|\geq b_c^{-120}.$$
Based on the following two facts
\begin{enumerate}
\item the blow-up points are continuously depend on the initial data in $\mathcal{O}_{k,p}$ (due to the continuity of the functions $F_j$, $j=2,\ldots,k$);
\item the blow-up points are not too far away from the the translation parameters, i.e.
\begin{equation}
\max_{1\leq j\leq k}|x_j(0)-x_j(T)|\lesssim b_c^{-1},
\end{equation}
\end{enumerate}
we can construct blow-up solutions whose blow-up set is exactly the set of these $k$ points by a topological argument%
\footnote{See Lemma \ref{TL} for more details.}
(different from the one that is used to construct the set $\mathcal{O}_{k,p}$). 

While for arbitrarily given $k$ pairwise distinct points, from Remark \ref{symmetry}, we may use an argument of scaling to reduce to the case where the distance between the points is large. Thus, we conclude the proof of Theorem \ref{MT}.}

\subsection{Notations}
We first introduce the scaling generator:
\begin{equation}
\Lambda f=\frac{2}{p-1}f+yf'.
\end{equation}
	
We denote the $L^2$ scalar product by:
\begin{equation}
(f,g)=\int_{\mathbb{R}}f(x)g(x)dx
\end{equation}
and observe the integration by parts:
\begin{equation}
(\Lambda f,g)=-(f,\Lambda g+2\sigma_c g).
\end{equation}
	
Then we let $\mathcal{Q}_p$ be the ground state. For $p=5$, we simply write $\mathcal{Q}_p$ as $\mathcal{Q}$. We introduce the linearized operators at $\mathcal{Q}_p$:
\begin{equation}
Lf=-f''+f-p\mathcal{Q}_p^{p-1}f.
\end{equation}
A standard computation leads to:
\begin{equation}
L(\mathcal{Q}_p')=0,\quad L(\Lambda \mathcal{Q}_p)=-2\mathcal{Q}_p.
\end{equation}
{ We also denote by 
\begin{equation}
\nu=\frac{1}{1000}>0,
\end{equation}
a small universal constant.}

Next, we denote by $A\lesssim B$ ($A\gtrsim B$), if there exists a universal constant%
\footnote{In this paper, ``universal constant" means a constant which does not depend on $p$ and $k$.}
 $C>0$ such that
\begin{equation}
A\leq CB\text{ ($A\geq \frac{1}{C}B$)}.
\end{equation}
	
Finally, we denote by $\delta(p)$ a small positive constant such that:
\begin{equation}
\lim_{p\rightarrow5}\delta(p)=0.
\end{equation}

\subsection*{Acknowledgement} The author is grateful for his supervisors F. Merle \& T. Duyckaerts for suggesting this problem and giving a lot of guidance. The author also thanks H. Koch for very helpful discussion about the self-similar profile.
	
\section{Modulation estimate and Topological argument}
\subsection{Self-similar profile}
Let us first recall the properties of the self-similar profile $Q_b$ constructed in \cite{L1}:
\begin{proposition}[H. Koch, \cite{K}]
There exists $p^*>5$, $b^*>0$ and 2 smooth maps: $\gamma(b,p): [0,b^*)\times[5,p^*)\rightarrow \mathbb{R}$, $v(b,p,y): [0,b^*)\times[5,p^*)\times\mathbb{R}\rightarrow \mathbb{R}$, such that the following holds:
\begin{enumerate}
\item The self-similar equation:
\begin{gather}
b\big{(}(1+\gamma(b,p))v+xv'\big{)}+(v''-v+v|v|^{p-1})'=0,\label{SFE}\\
(v(b,p,\cdot),\mathcal{Q}_p'(\cdot))=0,\quad v(b,p,y)>0.
\end{gather}
\item For all $ p\in[5,p^*)$, there exists a unique $b=b(p)\in[0,b^*)$ such that:
\begin{equation}
\gamma(b(p),p)=-1+\frac{2}{p-1},\quad b(5)=0,
\end{equation}
Moreover,
\begin{gather}
\frac{d b(p)}{d p}\bigg{|}_{p=5}=\frac{\|\mathcal{Q}\|_{L^2}^2}{\|\mathcal{Q}\|_{L^1}^2}=\frac{4\pi^2}{\Gamma(1/4)^4}>0,\\
\frac{\partial\gamma}{\partial b}\bigg{|}_{b=b(p)}=-\frac{\|\mathcal{Q}_p\|_{L^1}^2}{8\|\mathcal{Q}_p\|_{L^2}^2}+O(|p-5|)<0,\\
\frac{1}{2}\int|v_y(b(p),p,y))|^2dy-\frac{1}{p+1}\int|v(b(p),p,y)|^{p+1}dy=0.
\end{gather}
\item $v(b,p,\cdot)\in \dot{H}^1\cap L^{p+1}$, $v(b,p,\cdot)\notin L^2$ if $b>0$ and $v(0,p,y)=\mathcal{Q}_p(y)$. Moreover, let
$$w_p(b,y)=v(b,p,y)-\mathcal{Q}_p(y),$$
then for all $k,n\in\mathbb{N}$ there holds:
\begin{gather}
|w_p(b,y)|\lesssim\begin{cases}
e^{-\frac{1}{3b}}(1+b^{-2/3}|1-by|)^{-1-\gamma} & \text{if $y > b^{-1}$},\\
b\exp(\frac{1}{3b}[(1-by)^{3/2}-1]) & \text{if $b^{-1}\geq y>0$},\\
b(1-by)^{-1-\gamma} & \text{if $y\leq0$},
\end{cases}\label{ASB}\\
|\partial_y^k\partial_b^nv|
\lesssim\begin{cases}
e^{-\frac{1}{3b}}(1+b^{-2/3}|1-by|)^{-1-\gamma-k} & \text{if $y > b^{-1}$},\\
e^{-\frac{y}{10}}& \text{if $0\leq y\leq b^{-1}$},\\
\big{|}\partial_y^k\partial_b^n\big{(}b(1-by)^{-1-\gamma}\big{)}\big{|}+e^y & \text{if $y\leq0$}.
\end{cases}\label{ASB1}
\end{gather}
\end{enumerate}	
\end{proposition}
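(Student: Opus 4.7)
The plan is to construct the pair $(v(b,p,\cdot),\gamma(b,p))$ by perturbing off the trivial $b=0$ datum $v(0,p,\cdot)=\mathcal{Q}_p$ (at which $\gamma$ is unconstrained by \eqref{SFE}), and then to locate $b(p)$ as the unique zero in $b$ of $\gamma(b,p)+1-\tfrac{2}{p-1}$ via the implicit function theorem. The genuinely delicate part of the proposition is the sharp pointwise asymptotic in (3), which requires a matched-asymptotic analysis across the three zones $\{y\leq 0\}$, $\{0\leq y\leq b^{-1}\}$, and $\{y\geq b^{-1}\}$ separated by the turning point at $y=b^{-1}$ of the linearized self-similar operator.

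First I would integrate \eqref{SFE} once, using $(1+\gamma)v+yv'=\gamma v+(yv)'$, to obtain the nonlocal second-order equation
\[
v''-v+v|v|^{p-1}=-b\bigl(yv+\gamma V\bigr),\qquad V(y)=\int_{-\infty}^y v(b,p,z)\,dz.
\]
Writing $v=\mathcal{Q}_p+w$ linearizes the left-hand side to $Lw$, so the equation takes the form $Lw=-b\mathcal{R}(v,\gamma)+\mathcal{N}(w)$ with $\mathcal{N}(w)$ quadratic in $w$ and $\mathcal{R}$ explicit. Since $\ker L=\mathrm{span}\{\mathcal{Q}_p'\}$ on decaying functions, the Fredholm alternative imposes $(\mathcal{R}(v,\gamma),\mathcal{Q}_p')=0$, a scalar equation that determines $\gamma$ smoothly as a function of $(b,w,p)$; together with the normalization $(w,\mathcal{Q}_p')=0$, which removes the kernel direction and makes $L$ invertible on its orthogonal complement, a contraction argument in an exponentially-weighted Sobolev space yields $(w,\gamma)$ jointly smooth in $(b,p)$ for small $b$. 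Differentiating the solvability condition at $b=0$ and using $L(\Lambda\mathcal{Q}_p)=-2\mathcal{Q}_p$ to invert $L$ against the source produces the closed expression for $\partial_b\gamma|_{b=0}$ in terms of $\|\mathcal{Q}_p\|_{L^1}^2/\|\mathcal{Q}_p\|_{L^2}^2$; its nonvanishing lets the implicit function theorem solve $\gamma(b,p)=-1+\tfrac{2}{p-1}$ uniquely in $b$, producing $b(p)$ with $b(5)=0$ and $b'(5)$ as stated. The zero-energy identity for $v(b(p),p,\cdot)$ is a Pohozaev relation: multiplying \eqref{SFE} by $v$ and by $yv$ and integrating, the $b$-contributions cancel exactly at the critical value $\gamma=-1+\tfrac{2}{p-1}$.

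The main obstacle is the sharp pointwise bound \eqref{ASB}. On $\{y\leq 0\}$ and on $\{0\leq y\leq b^{-1}\}$ the potential $1-p\mathcal{Q}_p^{p-1}$ of $L$ is bounded below away from the soliton core, so the resolvent of $L$ against the source $-b(yv+\gamma V)$ produces the exponential decay $e^{-y/10}$ in the intermediate region; on the far left the slowly-varying source $-b\gamma V\to -b\gamma\|v\|_{L^1}$ transported by the constant-coefficient operator $-\partial_y^2+1$ yields the algebraic tail $b(1-by)^{-1-\gamma}$. Near the turning point $y\sim b^{-1}$, the rescaling $y=b^{-1}+b^{-2/3}\xi$ reduces the linearized self-similar equation to an Airy-type ODE whose decaying solution has modulus $(1+b^{-2/3}|1-by|)^{-1-\gamma}$ by the standard connection formulas, and the exponentially small prefactor $e^{-1/(3b)}$ arises from the WKB tunneling integral $\int_1^{b^{-1}}\sqrt{1-\cdots}\,dy\sim \tfrac{1}{3b}$ connecting the soliton core to the turning point. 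The $(k,n)$-derivative bounds in \eqref{ASB1} then follow by differentiating the fixed-point equation in $y$ and $b$ and iterating the same resolvent and WKB estimates.
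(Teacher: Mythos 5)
A preliminary remark: the paper does not prove this proposition --- it is quoted from Koch \cite{K} (with the localized consequences developed in \cite{L1}), so there is no internal proof to compare against. Judged on its own terms, your outline has the right skeleton (bifurcation from $\mathcal{Q}_p$ at $b=0$, a solvability condition fixing $\gamma$, the implicit function theorem in $b$ for the eigenvalue relation, and a turning-point/WKB analysis for the tails), and the orthogonality condition you propose does reproduce the correct leading value: pairing the first-order-in-$b$ equation $(Lw_1)'=(1+\gamma_0)\mathcal{Q}_p+y\mathcal{Q}_p'$ with $\mathcal{Q}_p$ gives $(1+\gamma_0)\|\mathcal{Q}_p\|_{L^2}^2-\tfrac12\|\mathcal{Q}_p\|_{L^2}^2=0$, i.e.\ $\gamma_0=-\tfrac12=-1+\tfrac{2}{p-1}\big|_{p=5}$, consistent with $b(5)=0$.

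However, two of your steps fail as written. First, the once-integrated nonlocal equation is ill-defined: by \eqref{ASB} the profile satisfies $v-\mathcal{Q}_p\sim b(1-by)^{-1-\gamma}$ with $-1-\gamma\approx-\tfrac12$ as $y\to-\infty$, so $v\notin L^1(-\infty,0)$ (the same slow tail that puts $v$ outside $L^2$), and $V(y)=\int_{-\infty}^y v$ diverges; one must work with the third-order equation directly or renormalize the primitive, and this divergence is exactly the mechanism that produces the algebraic tail, not a technicality. Second, and more seriously, the claimed ``contraction argument in an exponentially-weighted Sobolev space'' cannot close: $w=v-\mathcal{Q}_p$ does not belong to any exponentially weighted space (it is not even in $L^2$), and the perturbation $b\,y\,\partial_y$ is unbounded there no matter how small $b$ is. Indeed the first corrector already satisfies $w_1\to\gamma_0\|\mathcal{Q}_p\|_{L^1}\neq0$ at one end, so the expansion leaves the exponentially weighted category at order $b$. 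The construction must take place in spaces encoding the three-zone asymptotics of \eqref{ASB}, and for $b>0$ the function $\gamma(b,p)$ is determined by a \emph{connection problem} --- requiring admissible decay of $v$ across the turning point $y=b^{-1}$ --- of which your finite-dimensional Fredholm condition is only the $b\to0$ limit; the condition $(v,\mathcal{Q}_p')=0$ in the statement is merely a normalization removing translations. Relatedly, the Pohozaev derivation of the zero-energy identity manipulates $\int v^2=+\infty$ and needs regularization. These are precisely the hard parts of \cite{K}; as it stands the proposal describes the structure of the answer rather than establishing it.
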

For $p^*>p>5$ fixed, we denote by $b_c=b(p)\sim p-5$. We choose a smooth cut-off function $\chi$, such that $\chi(y)=0$ if $|y|>2$, $\chi(y)=1$ if $|y|<1$. We define the localized profile as follows:
$$Q_b(y)=v(b,p,y)\chi(b_cy).$$
	
Then $Q_b$ has the following properties:
\begin{lemma}[Properties of the localized profile]\label{SSP} Assume that $b_c$ is small and $|\tilde{b}|\ll b_c$, then there holds:
\begin{enumerate}
\item Estimates on $Q_b$, for all $ \ell\in\mathbb{N}$, $q\in [1,+\infty]$:
\begin{align}
&|\partial^{\ell}_yQ_b(y)|\lesssim_{\ell} e^{-\frac{y}{10}}, \quad\text{for } y\geq 0,\\
&|\partial_y^{\ell} Q_b(y)|\lesssim_{\ell} e^{y}+b_c^{1+k}\mathbf{1}_{[-2b_c^{-1},0]}(y),\quad\text{for } y\leq 0,\\
&\|Q_b-\mathcal{Q}_p\|_{L^q}\lesssim b_c^{1-\frac{1}{q}},\quad \|(Q_b-\mathcal{Q}_p)_y\|_{L^2}\lesssim b_c.
\end{align}
Here $\mathbf{1}_I$ is the characteristic function of any interval $I$.
\item $Q_b$ is an approximate self-similar profile: Let
\begin{equation}
-\Phi_b=b\Lambda Q_b+(Q_b''-Q_b+Q_b|Q_b|^{p-1})',
\end{equation}
then for $\ell=0,1$:
\begin{equation}\label{APP}
\partial_y^{\ell}\Phi_b=C_p\tilde{b}b_c\partial_y^{\ell}Q_b+O\big{(}|\tilde{b}|^2\partial_y^{\ell}Q_b+b_c^{2}\mathbf{1}_{[-2,-1]}(b_cy)+e^{-\frac{1}{10b_c}}\mathbf{1}_{[1,2]}(b_cy)\big{)},
\end{equation}
where $C_p=\frac{d\gamma}{db}\big{|}_{b=b_c}<0$.
\item Energy property of $Q_b$:
\begin{equation}
|E(Q_b)|\lesssim b_c^3+|\tilde{b}|.
\end{equation}
\item Properties of the first order term with respect to $b$:
let $P_b(y)=\frac{\partial Q_b}{\partial b}(y)$, then
\begin{equation}
|P_b(y)|\lesssim e^{-\frac{y}{10}}\mathbf{1}_{\{y>0\}}(y)+\mathbf{1}_{[-2b_c^{-1},0]}(y).
\end{equation}
Furthermore, we have:
\begin{equation}\label{NV}
(P_b,\mathcal{Q}_p)=\frac{1}{16}\bigg{(}\int \mathcal{Q}_p\bigg{)}^2+O(|p-5|)>0.
\end{equation}
\end{enumerate}
\end{lemma}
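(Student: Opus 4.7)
The plan is to treat $Q_b = v(b,p,\cdot)\chi(b_c\cdot)$ throughout as a cutoff perturbation of the exact self-similar profile $v$, so that each assertion reduces to combining three ingredients: the pointwise bounds \eqref{ASB}--\eqref{ASB1} supplied by the Koch proposition, a Taylor expansion in $\tilde b = b-b_c$ around the distinguished critical value $b_c = b(p)$, and control of the cutoff commutators, which are always localized to the two bands $b_c y \in [-2,-1]$ and $b_c y \in [1,2]$ where $\chi'$ is supported.

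For part (1), I would split $v = \mathcal{Q}_p + w_p$ and insert the Koch estimates: on $y\ge 0$ the soliton contributes $e^{-y}$ and the correction $w_p$ is controlled by the middle and top cases of \eqref{ASB1}, both bounded by $e^{-y/10}$ up to the support of $\chi(b_c y)$; on $y\le 0$, $\mathcal{Q}_p\lesssim e^y$ while $|w_p|\lesssim b_c$ on $[-2b_c^{-1},0]$ and $\chi$ kills everything beyond. The $L^q$ estimates follow from writing $Q_b - \mathcal{Q}_p = \mathcal{Q}_p(\chi(b_c y) - 1) + w_p\chi(b_c y)$ and bounding each piece (the first is exponentially tiny since its support sits where $|b_c y|\ge 1$); the $(Q_b-\mathcal{Q}_p)_y$ bound uses in addition the second line of \eqref{ASB1} for $\partial_y w_p$. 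For part (4), the pointwise bound on $P_b = \partial_b v\cdot\chi$ is just the $n=1$ case of \eqref{ASB1} combined with the support of $\chi$.

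For part (2), the key move is to rewrite the exact equation \eqref{SFE} for $v$ using the standard scaling generator, by isolating the shift of $\gamma$: since $\gamma(b_c,p) = -1 + 2/(p-1)$ by definition of $b_c$, one has $(1+\gamma(b,p))v + yv_y = \Lambda v + (\gamma(b,p) - \gamma(b_c,p))v = \Lambda v + C_p\tilde b\, v + O(\tilde b^2 v)$, so the self-similar equation becomes $b\Lambda v + (v_{yy} - v + v|v|^{p-1})' = -C_p b\tilde b\, v + O(b\tilde b^2 v)$. Absorbing $b = b_c + \tilde b$ produces the leading $C_p b_c\tilde b\, Q_b$ contribution to $\Phi_b$. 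Passing from $v$ to $Q_b$ introduces commutator terms supported on the two cutoff bands: on the right band $v$ is of size $e^{-1/(3 b_c)}$ by the top case of \eqref{ASB}, producing the $e^{-1/(10 b_c)}\mathbf{1}_{[1,2]}$ error after one differentiation; on the left band $|v|\lesssim b_c$ and $|v'|\lesssim b_c^2$, yielding the $b_c^2\mathbf{1}_{[-2,-1]}$ error. The $\ell=1$ statement follows by differentiating once more and reapplying \eqref{ASB1}.

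For part (3), write $E(Q_b) = E(v(b_c,p)) + [E(v(b,p)) - E(v(b_c,p))] + [E(Q_b) - E(v(b,p))]$. The first term vanishes by the energy identity in item (2) of the Koch proposition. The second is $O(\tilde b)$ by Taylor expansion, using the pointwise $\partial_b v$ bound from \eqref{ASB1}. For the third, expand $Q_b' = v'\chi + vb_c\chi'$, so that $(Q_b')^2 - (v')^2$ and $|Q_b|^{p+1} - |v|^{p+1}$ are supported on the two cutoff bands; on the right band the integrand is exponentially small, and on the left it is $O(b_c^2)$ over an interval of length $b_c^{-1}$, producing the $b_c^3$ bound. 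The main obstacle I foresee is the nondegeneracy identity \eqref{NV}: differentiating the self-similar equation in $b$ at $b=0$ gives $(L\partial_b v|_{b=0})' = (1+\gamma(0,p))\mathcal{Q}_p + y\mathcal{Q}_p'$, which at $p=5$ reduces exactly to $\Lambda\mathcal{Q}$; one must invert this primitive (choosing the integration constant carefully, using $\ker L\ni\mathcal{Q}'$ and $L(\Lambda\mathcal{Q}) = -2\mathcal{Q}$) and then compute $(\partial_b v|_{b=0,\,p=5},\mathcal{Q})$, identifying the result with $\frac{1}{16}(\int\mathcal{Q})^2$. The $O(|p-5|)$ remainder then comes from smoothness of $v$ and $\gamma$ in $(b,p)$.
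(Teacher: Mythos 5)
The paper itself does not prove Lemma \ref{SSP} here: it is recalled verbatim from the author's earlier work \cite{L1} (whose proof in turn follows the critical-gKdV computations of \cite{MMR1}), so there is no in-paper argument to compare against. That said, your outline is essentially the standard and correct proof. Parts (1), (2) and (4) are handled exactly as one should: the decomposition $Q_b-\mathcal{Q}_p=\mathcal{Q}_p(\chi(b_c\cdot)-1)+w_p\chi(b_c\cdot)$ together with \eqref{ASB}--\eqref{ASB1}; the identity $(1+\gamma(b,p))v+yv'=\Lambda v+(\gamma(b,p)-\gamma(b_c,p))v$ exploiting $1+\gamma(b_c,p)=\tfrac{2}{p-1}$, which is precisely where the leading term $C_p\tilde b b_c Q_b$ of \eqref{APP} comes from; and for \eqref{NV}, inverting $(L\,\partial_b v|_{b=0})'=\Lambda\mathcal{Q}$ with the primitive normalized to vanish at $+\infty$ and then using $L(\Lambda\mathcal{Q})=-2\mathcal{Q}$, self-adjointness of $L$, and $\int\Lambda\mathcal{Q}=-\tfrac12\int\mathcal{Q}$ at $p=5$ to get $(P,\mathcal{Q})=-\tfrac12(LP,\Lambda\mathcal{Q})=\tfrac1{16}\left(\int\mathcal{Q}\right)^2$; the choice of integration constant you flag is indeed the only delicate point, as it controls the sign.

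The one place where your sketch does not close as written is in part (3): you claim the cutoff-induced energy-density difference on the left band is ``$O(b_c^2)$ over an interval of length $b_c^{-1}$, producing the $b_c^3$ bound,'' but $b_c^2\cdot b_c^{-1}=b_c$, which would ruin the estimate. The correct accounting is that on $\{y\le -b_c^{-1}\}$ one has $|v|\lesssim b_c$ and $|v_y|\lesssim b_c^2$ (from the third line of \eqref{ASB1}), so $(Q_b)_y^2-v_y^2=v_y^2(\chi^2-1)+2v v_y b_c\chi\chi'+v^2b_c^2(\chi')^2=O(b_c^4)$ pointwise and $|Q_b|^{p+1}-|v|^{p+1}=O(b_c^{p+1})$; integrating over a region of length $O(b_c^{-1})$ then gives $O(b_c^3)$, as needed. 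With that correction, and noting that the paper's exponent $b_c^{1+k}$ in part (1) should be read as $b_c^{1+\ell}$ (coming from $\partial_y^{\ell}\bigl(b(1-by)^{-1-\gamma}\bigr)\sim b^{1+\ell}$), your argument is complete.
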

	
\subsection{Geometrical decomposition}
We first give definition of the open subset $\mathcal{U}_{k,p}$ such that the corresponding solution has at least one blow-up point. 
\begin{definition}
Let $p^*>5$ and close enough to $5$, for all $p\in(5,p^*)$ we define $\mathcal{U}_{k,p}$ as the set of all $u_0$ satisfying the following conditions:
\begin{enumerate}
\item Geometrical decomposition:
\begin{equation}
u_0(x)=\sum_{i=1}^k\frac{1}{\lambda_{i,0}^{\frac{2}{p-1}}}Q_{b_{i,0}}\bigg(\frac{x-x_{i,0}}{\lambda_{i,0}}\bigg){}+\tilde{u}_0(x).
\end{equation}
\item $b_{j,0}$ is near $b_c$:
\begin{equation}\label{27}
|b_{j,0}-b_c|\leq b_c^5.
\end{equation}
\item Conditions on the scaling parameters:
\begin{gather}
0<\lambda_{j,0}<1,\\
\frac{1}{2^k}<\frac{\lambda_{i,0}}{\lambda_{j,0}}<2^k,\quad \text{for all }i\not=j\label{28}.
\end{gather}
\item Distance between the blow-up points:
\begin{equation}
|x_{i,0}-x_{j,0}|> b_c^{-100},\quad \text{for all }i\not=j.
\end{equation}
\item $H^2$ smallness on the error term:
\begin{equation}\label{29}
\|\tilde{u}_0\|_{H^2}< b_c^{50}
\end{equation}
\end{enumerate}
\end{definition}
	
\begin{remark}
It is easy to check that $\mathcal{U}_{k,p}$ is nonempty and open.
\end{remark}
	
\begin{remark}
We will see in the next section why the assumption $\mathcal{U}_{k,p}\subset H^2$ is necessary for $k\geq 2$.
\end{remark}
Now we can introduce the classical geometrical decomposition. From a standard inverse function argument%
\footnote{See Lemma 1 in \cite{MM4} and Lemma 2.5 in \cite{MMR1}.}%
, we know that for all $u_0\in \mathcal{U}_{k,p}$, there exist a $t^*>0$ and geometrical parameters $\lambda_i(t)$, $b_i(t)$, $x_i(t)$, such that the corresponding solution $u(t)$ satisfies the following for all $t\in[0,t^*)$:
\begin{enumerate}
\item Geometrical decomposition:
\begin{equation}\label{GD}
u(t,x)=\sum_{i=1}^{k}\frac{1}{\lambda_i(t)^{\frac{2}{p-1}}}Q_{b_i(t)}\bigg(\frac{x-x_i(t)}{\lambda_i(t)}\bigg)+\tilde{u}(t,x).
\end{equation}
\item Orthogonality condition:
\begin{equation}\label{OC}
(\varepsilon_j(t),\mathcal{Q}_p)=(\varepsilon_j(t),\Lambda\mathcal{Q}_p)=(\varepsilon_j(t),y\Lambda\mathcal{Q}_p)=0,\text{ for all }j=1,\cdots,k,
\end{equation}
where $\mathcal{Q}_p$ is the ground state and 
$$\varepsilon_j(t,y)=\lambda_j(t)^{\frac{2}{p-1}}\tilde{u}\big(t,\lambda_j(t)y+x_j(t)\big).$$
\item Estimates on the parameters at the initial time: for all $i\not=j$,
\begin{gather}
0<\lambda_j(0)<2,\label{21}\\
\frac{2\lambda_{i,0}}{3\lambda_{j,0}}<\frac{\lambda_i(0)}{\lambda_j(0)}<\frac{3\lambda_{i,0}}{2\lambda_{j,0}},\label{22}\\
|b_j(0)-b_c|\leq b_c^{4},\label{225}\\
|x_i(0)-x_j(0)|\geq b_c^{-80},\label{23}\\
\|\varepsilon_j(0)\|_{L^{(p-1)/2}}+\big\|(\varepsilon_j(0))_y\big\|_{H^1}\leq b_c^{30}\label{24}
\end{gather}
\item Continuity of the parameters: Consider $u_{0,n}\in\mathcal{U}_{k,p}$, $u_{0,n}\rightarrow u_0$ in $H^1$. Let $u_n(t)$ be the solution of \eqref{CP} with initial data $u_{0,n}$ and $\lambda_{j,n}(t)$, $b_{j,n}(t)$, $x_{j,n}(t)$, $\tilde{u}_{n}(t,x)$ be the corresponding geometrical parameters and error terms. Suppose there exists a $t^*_1>0$ such that the geometrical decomposition for all $u_n(t)$ and $u(t)$ hold on $[0,t^*_1)$, then for all $0\leq t<t^*_1$, we have:
\begin{equation}\label{continuity}
\lim_{n\rightarrow +\infty}\big(\lambda_{j,n}(t),b_{j,n}(t),x_{j,n}(t),\tilde{u}_{n}(t)\big)=\big(\lambda_j(t),b_j(t),x_j(t),\tilde{u}(t)\big).
\end{equation}
\end{enumerate}
	
Next, we want to define the localized $H^1$ norm of $\varepsilon_j$. We first denote:
\begin{equation}
B=b_c^{-\frac{1}{20}},
\end{equation}
and choose a smooth weight function $\varphi$ such that:
\begin{equation}
\begin{split}
&\varphi(y)=
\begin{cases}
e^y &\text{ if } y<-1,\\
+y &\text{ if } -\kappa<y<\kappa,\\
3   &\text{ if } y>1.\\
\end{cases}\\
&\varphi'\geq 0\text{ for all }y\in\mathbb{R},
\end{split}
\end{equation}
where $\kappa$ is a small universal positive constant which will be chosen later.
	
Then we define the following {localized $H^1$ norm}:
\begin{equation}
\mathcal{N}_{j}(t)=\int\big((\varepsilon_j)^2_y+\varepsilon_j^2\big)\varphi'_B,
\end{equation}
where $\varphi_B(y)=\varphi(y/B).$
	
Let us consider the maximal time $T^*$ such that the geometrical decomposition \eqref{GD}, orthogonality condition \eqref{OC} and the following a $priori$ estimates hold in $[0,T^*)$:
\begin{gather}
0<\lambda_j(t)<3,\label{BA1}\\
\frac{1}{2^{k+2}}<\frac{\lambda_i(t)}{\lambda_j(t)}<2^{k+2},\label{BA2}\\
|b_{j}(t)-b_c|\leq b_c^{\frac{3}{2}+\nu},\label{BA3}\\
|x_i(t)-x_j(t)|\geq b_c^{-70}, \text{ for all }i\not=j,\label{BA4}\\
\|\varepsilon_j(t)\|_{L^{p_0}}\leq b_c^{\frac{23}{50}},\label{BA5}\\
\big\|\big(\varepsilon_j(t)\big)_y\big\|_{L^2}\leq b_c^{\frac{2}{3}},\label{BA6}\\
\mathcal{N}_j(t)\leq b_c^{3+6\nu},\label{BA7}
\end{gather}
where $\nu>0$ is a small universal constant to be chosen later, and $$p_0=\frac{5}{2}.$$

\begin{remark}
It is easy to see from \eqref{21}--\eqref{24} and the continuity of the flow that $T^*>0$.
\end{remark}	
	
{\begin{remark}\label{rem1}
Our goal is to improve these estimates in $[0,T^*)$. Then from a standard bootstrap argument, we have $T^*=T$,  and these estimates actually hold on $[0,T)$, where $T$ is the maximal life span. Indeed, following from similar argument as in \cite{L1}, we can improve \eqref{BA1} and \eqref{BA3}--\eqref{BA7}. But to improve the bound \eqref{BA2}, we need to assume that $u_0\in\mathcal{O}_{k,p}\subset\mathcal{U}_{k,p}$, where $\mathcal{O}_{k,p}$ is an infinite subset of $\mathcal{U}_{k,p}$. This subset can be constructed by a topological argument.
\end{remark}}

\begin{remark}
From \eqref{BA5}, \eqref{BA6} and Gagliardo-Nirenberg's inequality, we have for all $q_0\geq p_0$, $t\in[0,T^*)$,
\begin{equation}\label{210}
\|\varepsilon_j(t)\|_{L^{q_0}}\leq b_c^{\frac{149q_0-62}{270q_0}}.
\end{equation}
\end{remark}

\subsection{Modulation estimate}
In this subsection, we will prove the modulation estimates for the geometrical parameters on $[0,T^*)$ by using the a $priori$ estimates \eqref{BA1}--\eqref{BA7}. We first introduce a rescaled coordinate $(s_j,y)$ for all $j=1,\ldots,k$:
$$s_j=\int_0^t\frac{1}{\lambda_j(\tau)^3}\,d\tau,\quad y=\frac{x-x_j(t)}{\lambda_j(t)}.$$
Let 
$$s_j^*=\int_0^{T^*}\frac{1}{\lambda_j(\tau)^3}\,d\tau.$$
Now we can state the modulation estimates:
\begin{proposition}
	For all $j=1,\ldots,k$, the following properties hold for all $s_j\in[0,s_j^*)$:
	\begin{enumerate}
		\item Equation of $\varepsilon_j$:
		\begin{equation}\label{ME}\begin{split}
		&\frac{d\varepsilon_j}{ds_j}=(L\varepsilon_j)_y-b_j\Lambda\varepsilon_j+\bigg(\frac{1}{\lambda_j}\frac{d\lambda_j}{ds_j}+b_j\bigg)(\Lambda Q_{b_j}+\Lambda \varepsilon_j)+\bigg(\frac{1}{\lambda_j}\frac{dx_j}{ds_j}-1\bigg)(Q_{b_j}+\varepsilon_j)_y\\
		&+\Phi_{b_j}-\frac{db_j}{ds_j}P_{b_j}-(R_{b_j}(\varepsilon_j))_y-(R_{NL}(\varepsilon_j))_y\\
		&+\sum_{i=1,i\not=j}^k\bigg(\frac{\lambda_j}{\lambda_i}\bigg)^{\frac{3p-1}{p-1}}\Bigg[\Phi_{b_i}\bigg(\frac{\lambda_j\cdot+x_j-x_i}{\lambda_i}\bigg)+\frac{db_i}{ds_i}P_{b_i}\bigg(\frac{\lambda_j\cdot+x_j-x_i}{\lambda_i}\bigg)\Bigg]\\
		&+\sum_{i=1,i\not=j}^k\bigg(\frac{\lambda_j}{\lambda_i}\bigg)^{\frac{3p-1}{p-1}}\Bigg[\bigg(\frac{1}{\lambda_i}\frac{d\lambda_i}{ds_i}+b_i\bigg)\Lambda Q_{b_i}+\bigg(\frac{1}{\lambda_i}\frac{dx_i}{ds_i}-1\bigg)(Q_{b_i})_y\Bigg]\bigg(\frac{\lambda_j\cdot+x_j-x_i}{\lambda_i}\bigg)\\
		&-p\sum_{i=1,i\not=j}^k\bigg(\frac{\lambda_j}{\lambda_i}\bigg)^{2+\frac{2}{p-1}}\Bigg[\varepsilon_jQ_{b_i}^{p-1}\bigg(\frac{\lambda_j\cdot+x_j-x_i}{\lambda_i}\bigg)\Bigg]_y,
		\end{split}
		\end{equation}
		where
		\begin{align*}
		&\Phi_{b_i}=-b_i\Lambda Q_{b_i}-(Q_{b_i}''-Q_{b_i}+Q_{b_i}^p)',\\
		&R_{b_j}(\varepsilon_j)=p(Q_{b_j}^{p-1}-\mathcal{Q}_p^{p-1})\varepsilon_j,\\
		&R_{NL}(\varepsilon_j)=\bigg[\varepsilon_j+\sum_{i=1}^k\bigg(\frac{\lambda_j}{\lambda_i}\bigg)^{\frac{2}{p-1}}Q_{b_i}\bigg(\frac{\lambda_j\cdot+x_j-x_i}{\lambda_i}\bigg)\bigg]^p\\
		&\quad\qquad\qquad -p\sum_{i=1}^k\bigg(\frac{\lambda_j}{\lambda_i}\bigg)^{2+\frac{2}{p-1}}\varepsilon_jQ_{b_i}^{p-1}\bigg(\frac{\lambda_j\cdot+x_j-x_i}{\lambda_i}\bigg)\\
		&\quad\qquad\qquad-\sum_{i=1}^k\bigg(\frac{\lambda_j}{\lambda_i}\bigg)^{2+\frac{2}{p-1}}Q_{b_i}^p\bigg(\frac{\lambda_j\cdot+x_j-x_i}{\lambda_i}\bigg).
		\end{align*}
		\item Modulation estimates:
		\begin{gather}
		\bigg|\frac{1}{\lambda_j}\frac{d\lambda_j}{ds_j}+b_j\bigg|\lesssim b_c^2+\mathcal{N}_j^{\frac{1}{2}},\label{ME1}\\
		\bigg|\frac{1}{\lambda_j}\frac{dx_j}{ds_j}-1\bigg|\lesssim b_c^2+\mathcal{N}_j^{\frac{1}{2}},\label{ME2}\\
		\bigg|\frac{db_j}{ds_j}+c_p\tilde{b}_jb_c\bigg|\lesssim b_c^3+b_c\mathcal{N}_j^{\frac{1}{2}},\label{ME3}
		\end{gather}
		where $c_p=2+O(|p-5|)>0$.
	\end{enumerate}
\end{proposition}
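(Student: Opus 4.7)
The plan is to proceed in three stages: derive the evolution equation \eqref{ME} for $\varepsilon_j$, extract the modulation system from the three orthogonality conditions \eqref{OC}, and separately control the bubble-bubble interactions that distinguish the present setting from the single-bubble analysis of \cite{L1}.

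\textbf{Step 1. Equation of $\varepsilon_j$.} In the rescaled variables $(s_j,y)$ with $y=(x-x_j)/\lambda_j$, I rewrite $u$ by isolating the $j$-th bubble: $u=\lambda_j^{-2/(p-1)}[Q_{b_j}(y)+\varepsilon_j(s_j,y)]+\sum_{i\neq j}\lambda_i^{-2/(p-1)}Q_{b_i}((\lambda_j y+x_j-x_i)/\lambda_i)$. Plugging into \eqref{CP}, using the chain rule on the parameters (which produces the $\Lambda Q_{b_j}$, $(Q_{b_j})_y$ and $P_{b_j}$ terms via $P_{b_j}=\partial_b Q_{b_j}$), invoking the definition of $\Phi_{b_j}$ to absorb the exact self-similar part, and expanding $u|u|^{p-1}$ around $Q_{b_j}$ in the $j$-th frame while isolating the part linear in $\varepsilon_j$ (which gives $-p\mathcal{Q}_p^{p-1}\varepsilon_j$ with the $R_{b_j}(\varepsilon_j)$ correction arising from $Q_{b_j}-\mathcal{Q}_p$), one obtains \eqref{ME}. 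The Jacobian powers $(\lambda_j/\lambda_i)^{(3p-1)/(p-1)}$ come from rescaling derivative-of-source terms at scale $\lambda_i$ into the $j$-th frame, while $(\lambda_j/\lambda_i)^{2+2/(p-1)}$ comes from the nonlinear cross term, consistent with the definition of $R_{NL}(\varepsilon_j)$.

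\textbf{Step 2. Modulation system.} I differentiate each identity $(\varepsilon_j,\mathcal{Q}_p)=(\varepsilon_j,\Lambda\mathcal{Q}_p)=(\varepsilon_j,y\Lambda\mathcal{Q}_p)=0$ in $s_j$ and substitute \eqref{ME}. Using integration by parts, the relations $L\mathcal{Q}_p'=0$ and $L(\Lambda\mathcal{Q}_p)=-2\mathcal{Q}_p$, together with $(\Lambda f,g)=-(f,\Lambda g+2\sigma_c g)$, the terms $(L\varepsilon_j)_y$ and $b_j\Lambda\varepsilon_j$ contribute only $O(\mathcal{N}_j^{1/2})$ and $O(b_c\mathcal{N}_j^{1/2})$ after invoking \eqref{OC}. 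The unknowns $X_j:=\frac{1}{\lambda_j}\frac{d\lambda_j}{ds_j}+b_j$, $Y_j:=\frac{1}{\lambda_j}\frac{dx_j}{ds_j}-1$ and $Z_j:=\frac{db_j}{ds_j}$ then appear linearly through an essentially constant-coefficient $3\times 3$ matrix; its invertibility follows from the non-degeneracy of the pairings $(\Lambda Q_{b_j},\mathcal{Q}_p)$, $((Q_{b_j})_y,y\Lambda\mathcal{Q}_p)$ and the key fact $(P_{b_j},\mathcal{Q}_p)>0$ from \eqref{NV}. The forcing from $\Phi_{b_j}$, expanded via \eqref{APP}, contributes the main term $c_p\tilde b_j b_c$ in the $Z_j$-equation with $\tilde b_j:=b_j-b_c$ and only $O(b_c^2)$ in the $X_j,Y_j$-equations; the remainders $R_{b_j}(\varepsilon_j),R_{NL}(\varepsilon_j)$ are absorbed using \eqref{BA5}, \eqref{BA6}, \eqref{210} and the exponential decay of the test functions. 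Inverting the system produces \eqref{ME1}--\eqref{ME3}.

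\textbf{Step 3. Bubble interactions (the main obstacle).} The genuinely new feature, absent from \cite{L1}, is the three coupling sums in \eqref{ME}. When projected against $\mathcal{Q}_p,\Lambda\mathcal{Q}_p,y\Lambda\mathcal{Q}_p$, each integral evaluates $Q_{b_i}$, $P_{b_i}$, $\Phi_{b_i}$ or $Q_{b_i}^{p-1}$ at $z=(\lambda_j y+x_j-x_i)/\lambda_i$. On the effective support of the test function we have $|y|\lesssim 1$, and \eqref{BA4} together with \eqref{BA1}--\eqref{BA2} give $|z|\gtrsim b_c^{-70}/\lambda_i\gtrsim b_c^{-70}$. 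The pointwise bounds of Lemma \ref{SSP} (exponential decay on the right of $Q_{b_i}$ and $P_{b_i}$, and a small $O(b_c)$ plateau on the left of size $b_c^{-1}\ll b_c^{-70}$) then make each such inner product exponentially small in $b_c^{-70}$, far beyond the declared errors $b_c^2$ and $b_c^3$ in \eqref{ME1}--\eqref{ME3}. The nonlinear coupling $[\varepsilon_j Q_{b_i}^{p-1}(\cdot)]_y$ is treated in the same spirit, using \eqref{BA5}--\eqref{BA6} to absorb $\varepsilon_j$. Consequently the multi-bubble couplings are absorbed into the $b_c^2$/$b_c^3$ remainders and the modulation analysis reduces to the single-bubble one bubble by bubble.
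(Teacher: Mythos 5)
Your proposal is correct and follows essentially the same route as the paper: the derivation of \eqref{ME} and the inversion of the $3\times 3$ modulation system are carried over from the single-bubble analysis of \cite[Proposition 3.1]{L1}, and the only genuinely new point is that the cross-bubble terms are negligible because the localized profiles $Q_{b_i}$ (supported in $[-2b_c^{-1},2b_c^{-1}]$) and the exponentially decaying test functions $\mathcal{Q}_p,\Lambda\mathcal{Q}_p,y\Lambda\mathcal{Q}_p$ live at distance $\gtrsim b_c^{-70}$ from each other by \eqref{BA1}--\eqref{BA2} and \eqref{BA4}. The paper phrases this by locating the support of the interaction in the $y$-variable (where the test functions are of size $e^{-b_c^{-10}/10}$, summable over $i\neq j$ under $10^k\leq b_c^{-\nu/4}$), while you evaluate $Q_{b_i}$ far from its support in the $z$-variable; these are the same estimate.
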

\begin{proof}
	The proof of this proposition is almost the same as Proposition 3.1 in \cite{L1}. The only difference here is that we need to deal with some terms like 
	\begin{equation}
	Q_{b_i}\bigg(\frac{\lambda_j\cdot+x_j-x_i}{\lambda_i}\bigg),\,P_{b_i}\bigg(\frac{\lambda_j\cdot+x_j-x_i}{\lambda_i}\bigg) \text{ or } \Phi_{b_i}\bigg(\frac{\lambda_j\cdot+x_j-x_i}{\lambda_i}\bigg), 
	\end{equation}
	for $i\not=j$.
	
	We consider for example the following term 
	\begin{equation}\label{25}
	-p\sum_{i=1,i\not=j}^k\bigg(\frac{\lambda_j}{\lambda_i}\bigg)^{2+\frac{2}{p-1}}\Bigg[\varepsilon_jQ_{b_i}^{p-1}\bigg(\frac{\lambda_j\cdot+x_j-x_i}{\lambda_i}\bigg)\Bigg]_y.
	\end{equation}
	Since $Q_{b_i}$ is supported in $[-2b_c^{-1},2b_c^{-1}]$, if $y$ belongs to the support of
	$$Q_{b_i}^{p-1}\bigg(\frac{\lambda_j\cdot+x_j-x_i}{\lambda_i}\bigg),$$
	then we have 
	$$\frac{x_i-x_j}{\lambda_j}-2b_c^{-1}\frac{\lambda_i}{\lambda_j}<y<\frac{x_i-x_j}{\lambda_j}+2b_c^{-1}\frac{\lambda_i}{\lambda_j}.$$
	From \eqref{BA2} and \eqref{BA4}, we know that if $i\not=j$, 
	$$\bigg|\frac{x_i-x_j}{\lambda_j}\pm2b_c^{-1}\frac{\lambda_i}{\lambda_j}\bigg|\geq b_c^{-70}-2b_c^{-1}2^{k+2}>b_c^{-10},$$
	provided that $10^k\leq b_c^{-\nu/4}$ for some small universal constant $\nu>0$.  Since $$b_c\sim p-5,$$ this is implied by the condition 
	$$k\leq c|\log(p-5)|,$$
	if we choose 
	$$c=\frac{\nu}{8\log 10}=\frac{1}{8000\log 10}>0.$$
	
	Since we are considering a scalar product of \eqref{25} and some functions with exponential decay (i.e. $\mathcal{Q}_p$, $\Lambda\mathcal{Q}_p$, $y\Lambda{\mathcal{Q}_p}$), these terms can be controlled by
	$$\sum_{i=1,i\not=j}^k 10^k \exp(-\frac{1}{10}b_c^{-10})\leq b_c^{10},$$
	provided that $10^k\leq b_c^{-\nu/4}$. Then we conclude the proof.
\end{proof}

\subsection{First topological argument}
In this subsection we will find a nonempty subset $\mathcal{O}_{k,p}\subset\mathcal{U}_{k,p}$, such that the corresponding scaling parameters (i.e. $\lambda_j(t)$) are comparable to each other.

\begin{proposition}\label{211}
There exists a nonempty subset $\mathcal{O}_{k,p}\subset\mathcal{U}_{k,p}$, which {contains infinite many elements}, such that for all solution $u(t)$ with initial data in $\mathcal{O}_{k,p}$, the corresponding scaling parameters $\lambda_j(t)$ satisfy:
\begin{equation}
\frac{1}{2^{k+1}}\leq\frac{\lambda_i(t)}{\lambda_j(t)}\leq2^{k+1},\text{ for all $t\in [0,T^*)$ and $1\leq i,j\leq k$}.
\end{equation}
\end{proposition}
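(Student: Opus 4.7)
The plan is to carry out a Poincar\'e--Miranda shooting argument in the $(k-1)$-dimensional space of initial scaling parameters $(\lambda_{2,0}, \ldots, \lambda_{k,0})$, with the remaining ingredients of the decomposition \eqref{FD} held fixed, so as to synchronize the formal blow-up times of the $k$ bubbles (cf.\ \eqref{IC}) and hence trap the ratios $\lambda_i(t)/\lambda_j(t)$ in the desired window.

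I would first fix an admissible quadruple $(\vec b_0, \vec x_0, \tilde u_0, \lambda_{1,0})$ and introduce reference values $\lambda_{j,0}^{\dagger} = \lambda_{1,0}(b_{j,0}/b_{1,0})^{1/3}$, at which the formal system \eqref{FODE} predicts equal blow-up times. With $\eta = b_c^{1/10}$, set
\begin{equation*}
\mathcal{B} = \prod_{j=2}^{k} \big[\lambda_{j,0}^{\dagger}(1 - \eta),\ \lambda_{j,0}^{\dagger}(1 + \eta)\big],
\end{equation*}
which still sits inside $\mathcal{U}_{k,p}$. Integrating the modulation estimate \eqref{ME1} in the original time variable, together with \eqref{BA3} and \eqref{BA7}, yields on $[0, T^*)$
\begin{equation*}
\frac{d}{dt}\lambda_j^3(t) = -3 b_c + O\!\big(b_c^{3/2 + \nu}\big), \qquad \lambda_i^3(t) - \lambda_j^3(t) = \lambda_{i,0}^3 - \lambda_{j,0}^3 + O\!\big(b_c^{3/2 + \nu} t\big).
\end{equation*}
Since $t \lesssim \lambda_{1,0}^3 / b_c$ on the bootstrap interval, the accumulated error is $O(b_c^{1/2 + \nu} \lambda_{1,0}^3)$, dwarfed by the separation $|\lambda_{i,0}^3 - \lambda_{j,0}^3|$ of order $\eta \lambda_{1,0}^3 = b_c^{1/10} \lambda_{1,0}^3$ available on $\partial \mathcal{B}$. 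Hence the sign of $\lambda_i^3(t) - \lambda_j^3(t)$ is dictated by the sign of $\lambda_{i,0}^3 - \lambda_{j,0}^3$ throughout $[0, T^*)$, and any crossing of a ratio threshold is strictly transverse.

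Next, define the shooting map $F: \mathcal{B} \to \mathbb{R}^{k-1}$ by letting $t^{\sharp} \in (0, T^*]$ denote the first exit time of some ratio $\lambda_i(t)/\lambda_j(t)$ from $[2^{-(k+1)}, 2^{k+1}]$ (or $t^{\sharp} = T^*$ if no such exit occurs), and setting
\begin{equation*}
F_j(\lambda_{2,0}, \ldots, \lambda_{k,0}) = \lambda_j^3(t^{\sharp}) - \lambda_1^3(t^{\sharp}), \qquad j = 2, \ldots, k.
\end{equation*}
The continuous dependence \eqref{continuity} together with the transversality above ensures that $F$ is continuous. On the face of $\partial \mathcal{B}$ where $\lambda_{j,0} = \lambda_{j,0}^{\dagger}(1 + \eta)$, the leading dynamics force $F_j > 0$ with margin $\gtrsim \eta \lambda_{1,0}^3$ beating all perturbative errors, while $F_j < 0$ on the opposite face. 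The Poincar\'e--Miranda theorem (a direct consequence of Brouwer's fixed point theorem) then produces an interior zero $(\lambda_{2,0}^*, \ldots, \lambda_{k,0}^*) \in \mathcal{B}$ of $F$. At such a zero, any exit at $t^{\sharp} < T^*$ would force a nonzero $F_{i_0}$ for the bubble $i_0$ participating in the exit, contradicting $F = 0$; hence no exit occurs, and $2^{-(k+1)} \leq \lambda_i(t)/\lambda_j(t) \leq 2^{k+1}$ holds throughout $[0, T^*)$.

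Finally, varying $(\vec b_0, \vec x_0, \tilde u_0, \lambda_{1,0})$ over the open set of admissible quadruples yields a continuous family of such zeros, so $\mathcal{O}_{k,p}$ is manifestly infinite. The main obstacle I anticipate is the quantitative transversality at $\partial \mathcal{B}$: one must verify that the perturbative errors arising from \eqref{ME1}--\eqref{ME3} and from residual bubble interactions (handled via \eqref{BA4} and the restriction $k \leq c|\log(p-5)|$) stay strictly smaller than the initial separation $|\lambda_{i,0}^3 - \lambda_{j,0}^3| \gtrsim \eta \lambda_{1,0}^3$ throughout the bootstrap interval. This scale comparison via \eqref{BA1}--\eqref{BA7} is the only genuinely quantitative step in an otherwise topological argument.
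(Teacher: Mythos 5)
Your overall strategy --- shooting in the initial scaling parameters and closing with a topological theorem --- matches the paper's, but the implementation has two genuine gaps. The paper does not run a $(k-1)$-dimensional Poincar\'e--Miranda argument on a map evaluated at an exit time; it proceeds coordinate by coordinate (Lemma \ref{212}, by induction), and for each coordinate it only needs that the sets $S_j^<$, $S_j^>$ of initial values leading to an exit below, resp.\ above, the window are \emph{open}, \emph{disjoint} and \emph{proper} subsets of a connected interval. Openness of an exit set is a one-sided, robust condition and does not require continuity of the first exit time. Your map $F_j(\vec\lambda_0)=\lambda_j^3(t^\sharp)-\lambda_1^3(t^\sharp)$ must instead be continuous on all of $\mathcal{B}$, and continuity fails (or at least is not established) precisely at the interface between data that exit and data that do not: there $t^\sharp$ jumps to $T^*$, whose dependence on the data is not controlled. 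The near-constancy of $\lambda_i^3-\lambda_j^3$ does not repair this. It only shows $F$ is within $O(b_c^{1/2+\nu}\lambda_{1,0}^3)$ of a continuous function, and an \emph{approximate} zero of $F$ is useless here: near the blow-up time $\lambda_j^3(t)$ itself drops below $b_c^{1/2+\nu}\lambda_{1,0}^3$, so a residual difference of that size can still drive a ratio out of $[2^{-(k+1)},2^{k+1}]$. Relatedly, your claimed transversality and the disjointness $S^<\cap S^>=\emptyset$ really come from the repulsivity of the ratio ODE (the paper's Lemma \ref{Lemma21}: $\frac{d}{dt}(\lambda_i/\lambda_j)$ is outgoing once the ratio leaves $[9/10,10/9]$), not from sign persistence of the cubic differences; you need that lemma, and you have not proved it.

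Second, your boundary sign conditions on $\partial\mathcal{B}$ are computed for the prescribed parameters $\lambda_{j,0}$ with a margin of order $\eta\lambda_{1,0}^3=b_c^{1/10}\lambda_{1,0}^3$, but the dynamics is governed by the modulation parameters $\lambda_j(0)$ produced by the implicit function theorem, and the only stated relation between the two is \eqref{22}, which permits a multiplicative slack of $3/2$ --- vastly larger than your margin, so the sign of $\lambda_j(0)^3-\lambda_1(0)^3$ on the faces of $\mathcal{B}$ is not determined by your construction. The paper's argument survives because it uses \eqref{22} only crudely (e.g.\ $\lambda_{2,0}/\lambda_{1,0}>2\Rightarrow\lambda_2(0)/\lambda_1(0)>4/3>10/9$, then Lemma \ref{Lemma21}). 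A simultaneous $(k-1)$-dimensional degree argument is a legitimate alternative to the paper's iterated one-dimensional connectedness argument, but it must be formulated as a no-retraction statement on the (open) set of exiting data, where continuity is actually available, rather than as a zero of a globally defined shooting map; and the initial-time decomposition must be quantified far more sharply than \eqref{22}.
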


\begin{proof}
	We first claim the following lemma:
	\begin{lemma}\label{Lemma21}
		For all $t_0\in[0,T^*)$, $1\leq i\not=j\leq k$, if $\lambda_i(t_0)/\lambda_j(t_0)\geq \frac{10}{9}$, then for all $t\in[t_0,T^*)$, we have:
		{\begin{equation}\label{26}
		\frac{\lambda_i(t)}{\lambda_j(t)}\geq \frac{10}{9}.
		\end{equation}}
	\end{lemma}
	\begin{proof}[Proof of Lemma \ref{Lemma21}]The proof of \eqref{26} is a consequence of the a $priori$ assumption \eqref{BA1}--\eqref{BA7} and the modulation estimates. Indeed, from \eqref{BA3}, \eqref{BA7} and \eqref{ME1}, we have:
		$$-\frac{99}{100}b_c>(\lambda_j)_t\lambda_j^2>-\frac{101}{100}b_c.$$
	Then we can compute the derivative of $\lambda_i/\lambda_j$ with respect to $t$:
	\begin{align*}
	\frac{d}{dt}\bigg(\frac{\lambda_i}{\lambda_j}\bigg)&=\frac{(\lambda_i)_t\lambda_j-(\lambda_j)_t\lambda_i}{\lambda_j^2}\\
	&<\frac{1}{\lambda_j^2}\bigg(-\frac{99b_c\lambda_j}{100\lambda_i^2}+\frac{101b_c\lambda_i}{100\lambda_j^2}\bigg)\\
	&=\frac{b_c}{\lambda_j\lambda_i^2}\bigg[-\frac{99}{100}+\frac{101}{100}\bigg(\frac{\lambda_i}{\lambda_j}\bigg)^3\bigg].
	\end{align*}
	Similarly, we have:
	$$\frac{d}{dt}\bigg(\frac{\lambda_i}{\lambda_j}\bigg)>\frac{b_c}{\lambda_j\lambda_i^2}\bigg[-\frac{101}{100}+\frac{99}{100}\bigg(\frac{\lambda_i}{\lambda_j}\bigg)^3\bigg].$$
	The above {two} inequalities show that if for some time $t_0\in[0,T^*)$, we have
	$${\frac{\lambda_i(t_0)}{\lambda_j(t_0)}\geq \frac{10}{9},}$$
	then we have:
	$${\frac{d}{dt}\bigg(\frac{\lambda_i}{\lambda_j}\bigg)\bigg|_{t=t_0}>0.}$$
	Hence, the lemma follows from a simple argument.
	\end{proof}
	
For convenience we introduce the following notations:
\begin{enumerate}
	\item 
    $\vec{\lambda}_0=(\lambda_{1,0},\ldots,\lambda_{k,0})$, $\vec{b}_0=(b_{1,0},\ldots,b_{k,0})$, $\vec{x}_0=(x_{1,0},\ldots,x_{k,0})$, and
    \begin{gather*}
    F(\vec{\lambda}_0,\vec{b}_0,\vec{x}_0,\tilde{u}_0)=\sum_{i=1}^k\frac{1}{\lambda_{i,0}^{\frac{2}{p-1}}}Q_{b_{i,0}}\bigg(\frac{x-x_{i,0}}{\lambda_{i,0}}\bigg){}+\tilde{u}_0(x),
    \end{gather*}
    \item Let $\mathcal{C}$ be the set of $(\vec{\lambda}_0,\vec{b}_0,\vec{x}_0,\tilde{u}_0)$ such that \eqref{27}--\eqref{29} hold (or equivalently $F(\vec{\lambda}_0,\vec{b}_0,\vec{x}_0,\tilde{u}_0)\in \mathcal{U}_{k,p}$). 
    \item For $\ell=1,\ldots,k$, we let $\mathcal{C}_{\ell}$ be the set of $(\vec{\lambda}_0,\vec{b}_0,\vec{x}_0,\tilde{u}_0)$ such that \eqref{27}--\eqref{29} hold with \eqref{28} replaced by
    $$\frac{1}{2^{k+1-\ell}}<\frac{\lambda_{i,0}}{\lambda_{j,0}}<2^{k+1-\ell}.$$
\end{enumerate}
Clearly, we have $\mathcal{C}_k\subset\mathcal{C}_{k-1}\subset,\ldots,\subset\mathcal{C}_1=\mathcal{C}$.
Proposition \ref{211} is a simple consequence of the following lemma
\begin{lemma}\label{212}
	{For all $2\leq \ell\leq k$ there exist continuous functions $F_{\ell}$: 
	\begin{gather*}\mathbb{R}_{+}^k\times\mathbb{R}^k\times H^2\times \mathbb{R}_{+}^{k-\ell+1}\rightarrow\mathbb{R}_+,\\
	(\vec{b}_0,\vec{x}_0,\tilde{u}_0,\lambda_{1,0},\lambda_{\ell+1,0},\ldots,\lambda_{k,0})\mapsto F_{\ell}(\vec{b}_0,\vec{x}_0,\tilde{u}_0,\lambda_{1,0},\lambda_{\ell+1,0},\ldots,\lambda_{k,0}),
	\end{gather*}}such that for all $2\leq j\leq k$ and $(\lambda_{1,0},\lambda_{j+1,0},\ldots,\lambda_{k,0},\vec{b}_0,\vec{x}_0,\tilde{u}_0)$, if there exist $\lambda_{2,0},\ldots,\lambda_{j,0}>0$ such that
	$$(\lambda_{1,0},\lambda_{2,0},\ldots,\lambda_{k,0},\vec{b}_0,\vec{x}_0,\tilde{u}_0)\in\mathcal{C}_{j},$$
	then the following holds:
	\begin{enumerate}
		\item $(\lambda_{1,0},\lambda_{2,0}^*,\ldots,\lambda_{j,0}^*,\lambda_{j+1,0},\ldots,\lambda_{k,0},\vec{b}_0,\vec{x}_0,\tilde{u}_0)\in\mathcal{C}_{1}$, where
		{\begin{align*}
		&\lambda_{j,0}^*=F_j(\vec{b}_0,\vec{x}_0,\tilde{u}_0,\lambda_{1,0},\lambda_{j+1,0},\ldots,\lambda_{k,0})>0,\\
		&\lambda_{j-1,0}^*=F_{j-1}(\vec{b}_0,\vec{x}_0,\tilde{u}_0,\lambda_{1,0},\lambda_{j,0}^*,\lambda_{j+1,0},\ldots,\lambda_{k,0})>0,\\
		&\quad\vdots\\
		&\lambda_{2,0}^*=F_{2}(\vec{b}_0,\vec{x}_0,\tilde{u}_0,\lambda_{1,0},\lambda_{3,0}^*,\ldots\lambda_{j,0}^*,\lambda_{j+1,0},\ldots,\lambda_{k,0})>0.
		\end{align*}}
		\item Let $u(t)$ be the solution of \eqref{CP} with initial data $$u_0=F(\lambda_{1,0},\lambda_{2,0}^*,\ldots,\lambda_{j,0}^*,\lambda_{j+1,0},\ldots,\lambda_{k,0},\vec{b}_0,\vec{x}_0,\tilde{u}_0),$$ and $\{\lambda_{i}(t)\}_{i=1}^k$ be the corresponding scaling parameters, then for all $t\in[0,T^*)$, $1\leq i_1,i_2\leq j$, we have
		$$\frac{1}{2^{k+1}}\leq \frac{\lambda_{i_1}(t)}{\lambda_{i_2}(t)}\leq 2^{k+1}.$$
	\end{enumerate}
	
\end{lemma}
\begin{proof}
We will prove Lemma \ref{212} by induction on $j$. We first prove Lemma \ref{212} for $j=2$.

Consider $(\lambda_{1,0},\lambda_{3,0},\ldots,\lambda_{k,0},\vec{b}_0,\vec{x}_0,\tilde{u}_0)$ such that there exists $\lambda_{2,0}>0$ such that
$$(\lambda_{1,0},\lambda_{2,0},\ldots,\lambda_{k,0},\vec{b}_0,\vec{x}_0,\tilde{u}_0)\in\mathcal{C}_2.$$
We denote by $S_2$, the set of all $\lambda_{2,0}$ such that $(\vec{\lambda}_0,\vec{b}_0,\vec{x}_0,\tilde{u}_0)\in\mathcal{C}_1$. Clearly $S_2$ is a nonempty interval. Next we define the following sets:
\begin{align*}
S^<_2=&\{\lambda_{2,0}\in S_2|\text{The solution $u(t)$ with initial data $F(\vec{\lambda}_0,\vec{b}_0,\vec{x}_0,\tilde{u}_0)$,} \\
&\text{satisfies $\lambda_{2}(t_0)/\lambda_1(t_0)<\frac{1}{2^{k+1}}$, for some $t_0\in[0,T^*)$.}\},\\
S^>_2=&\{\lambda_{2,0}\in S_2|\text{The solution $u(t)$ with initial data $F(\vec{\lambda}_0,\vec{b}_0,\vec{x}_0,\tilde{u}_0)$,} \\
&\text{satisfies $\lambda_{2}(t_0)/\lambda_1(t_0)>2^{k+1}$, for some $t_0\in[0,T^*)$.}\}.
\end{align*}
For these two sets, we have the following observations:
\begin{enumerate}
	\item $S^<_2$ and $S^>_2$ are both contained in $S_2$ and open. Here the openness comes from \eqref{continuity}.
	\item $S^<_2\cap S^>_2$ is empty. This is a direct corollary of Lemma \ref{Lemma21}.
	\item $S^<_2\not=S_2$, $S^>_2\not=S_2$. Since if $S^<_2=S_2$, then from the definition of $\mathcal{C}_1$ and $\mathcal{C}_2$, we may always find some $\lambda_{2,0}\in S_2$ such that 
	$$\frac{\lambda_{2,0}}{\lambda_{1,0}}>2.$$
	Then from \eqref{22}, we have 	$$\frac{\lambda_2(0)}{\lambda_{1}(0)}>\frac{2}{3}\times\frac{\lambda_{2,0}}{\lambda_{1,0}} >\frac{10}{9}.$$ 
	then from Lemma \ref{Lemma21}, we have for all $t\in[0,T^*)$,
	$$\frac{\lambda_2(t)}{\lambda_1(t)}>\frac{10}{9},$$
	which leads to a contradiction. Similarly, we have $S^>_2\not=S_2.$
\end{enumerate}

Since $S_2$ is a nonempty interval (i.e. connected), the above observations imply that $S_2/(S^<_2\cup S^>_2)$ is not empty. On the other hand, it is easy to check that if $\delta>0$ is small enough, then $(\inf S_2+\delta)\in S^<_2$. So we have $\inf{S_2/(S^<_2\cup S^>_2)}\in S_2/(S^<_2\cup S^>_2)$. We then choose 
$$F_2(\vec{b}_0,\vec{x}_0,\tilde{u}_0,\lambda_{1,0},\lambda_{3,0},\ldots,\lambda_{k,0})=\inf{S_2/(S^<_2\cup S^>_2)}.$$
From Lemma \ref{Lemma21} and \eqref{continuity}, we know that $F_2$ is continuous.

Next (if $k\geq3$), suppose for all $2\leq j\leq j_0-1$ ($3\leq j_0\leq k$), Lemma \ref{212} holds, i.e. there exists a continuous function $F_2,\ldots,F_j$, such that for all $$(\lambda_{1,0},\lambda_{j+1,0},\ldots,\lambda_{k,0},\vec{b}_0,\vec{x}_0,\tilde{u}_0),$$ if there exist $\lambda_{2,0},\ldots,\lambda_{j,0}>0$ such that
$$(\lambda_{1,0},\lambda_{2,0},\ldots,\lambda_{k,0},\vec{b}_0,\vec{x}_0,\tilde{u}_0)\in\mathcal{C}_{j},$$
then the following holds:
\begin{enumerate}
	\item $(\lambda_{1,0},\lambda_{2,0}^*,\ldots,\lambda_{j,0}^*,\lambda_{j+1,0},\ldots,\lambda_{k,0},\vec{b}_0,\vec{x}_0,\tilde{u}_0)\in\mathcal{C}_{1}$, where
	{\begin{align*}
		&\lambda_{j,0}^*=F_j(\vec{b}_0,\vec{x}_0,\tilde{u}_0,\lambda_{1,0},\lambda_{j+1,0},\ldots,\lambda_{k,0})>0,\\
		&\lambda_{j-1,0}^*=F_{j-1}(\vec{b}_0,\vec{x}_0,\tilde{u}_0,\lambda_{1,0},\lambda_{j,0}^*,\lambda_{j+1,0},\ldots,\lambda_{k,0})>0,\\
		&\quad\vdots\\
		&\lambda_{2,0}^*=F_{2}(\vec{b}_0,\vec{x}_0,\tilde{u}_0,\lambda_{1,0},\lambda_{3,0}^*,\ldots,\lambda_{j,0}^*,\lambda_{j+1,0},\ldots,\lambda_{k,0})>0.
	\end{align*}}
	\item Let $u(t)$ be the solution of \eqref{CP} with initial data $$u_0=F(\lambda_{1,0},\lambda_{2,0}^*,\ldots,\lambda_{j,0}^*,\lambda_{j+1,0},\ldots,\lambda_{k,0},\vec{b}_0,\vec{x}_0,\tilde{u}_0),$$ and $\{\lambda_{i}(t)\}_{i=1}^k$ be the corresponding scaling parameters, then for all $t\in[0,T^*)$, $1\leq i_1,i_2\leq j$, we have
	$$\frac{1}{2^{k+1}}\leq \frac{\lambda_{i_1}(t)}{\lambda_{i_2}(t)}\leq 2^{k+1}.$$
\end{enumerate}

Now for $j=j_0$, we consider all $(\lambda_{1,0},\lambda_{j_0+1,0},\ldots,\lambda_{k,0},\vec{b}_0,\vec{x}_0,\tilde{u}_0)$, such that there exist $\lambda_{2,0},\ldots,\lambda_{j_0,0}>0$, such that
$$(\lambda_{1,0},\lambda_{2,0},\ldots,\lambda_{k,0},\vec{b}_0,\vec{x}_0,\tilde{u}_0)\in\mathcal{C}_{j_0}.$$

We similarly denote by $S_{j_0}$ the set of all $\lambda_{j_0,0}$ such that there exist $\lambda_{2,0},\ldots,\lambda_{j_0-1,0}>0$ such that
$$(\lambda_{1,0},\lambda_{2,0},\ldots,\lambda_{k,0},\vec{b}_0,\vec{x}_0,\tilde{u}_0)\in\mathcal{C}_{j_0-1}.$$
It is easy to see from the definition of $\mathcal{C}_{j_0-1}$ and $\mathcal{C}_{j_0}$, that $S_{j_0}$ is an interval and not empty. Moreover, from the induction hypothesis, for all $\lambda_{j_0,0}\in S_{j_0}$, we have:
$$(\lambda_{1,0},\lambda_{2,0}^*,\ldots,\lambda_{j_0-1,0}^*,\lambda_{j_0,0},\ldots,\lambda_{k,0},\vec{b}_0,\vec{x}_0,\tilde{u}_0)\in\mathcal{C}_1,$$
where
{\begin{align*}
&\lambda_{j_0-1,0}^*=F_{j_0-1}(\vec{b}_0,\vec{x}_0,\tilde{u}_0,\lambda_{1,0},\lambda_{j_0,0},\ldots,\lambda_{k,0})>0,\\
&\lambda_{j_0-2,0}^*=F_{j_0-2}(\vec{b}_0,\vec{x}_0,\tilde{u}_0,\lambda_{1,0},\lambda^*_{j_0-1,0},\lambda_{j_0,0},\ldots,\lambda_{k,0})>0,\\
&\quad\vdots\\
&\lambda_{2,0}^*=F_{2}(\vec{b}_0,\vec{x}_0,\tilde{u}_0,\lambda_{1,0},\lambda_{3,0}^*,\ldots,\lambda^*_{j_0-1,0},\lambda_{j_0,0},\ldots,\lambda_{k,0})>0.
\end{align*}}

Next we define $S^<_{j_0}$ be the set of all $\lambda_{j_0,0}\in S_{j_0}$ such that the solution $u(t)$ with initial data $$u_0=F(\lambda_{1,0},\lambda_{2,0}^*,\dots,\lambda_{j_0-1,0}^*,\lambda_{j_0,0},\ldots,\lambda_{k,0},\vec{b}_0,\vec{x}_0,\tilde{u}_0),$$ 
satisfies $\lambda_{j_0}(t_0)/\lambda_{i_0}(t_0)<1/2^{k+1}$, for some $t_0\in[0,T^*)$ and some $i_0\in\{1,\ldots,j_0-1\}$. 

Similarly, we define $S^>_{j_0}$ be the set of all $\lambda_{j_0,0}\in S_{j_0}$ such that the solution $u(t)$ with initial data $$u_0=F(\lambda_{1,0},\lambda_{2,0}^*,\dots,\lambda_{j_0-1,0}^*,\lambda_{j_0,0},\ldots,\lambda_{k,0},\vec{b}_0,\vec{x}_0,\tilde{u}_0),$$ satisfies $\lambda_{j_0}(t_0)/\lambda_{i_0}(t_0)>2^{k+1}$, for some $t_0\in[0,T^*)$ and some $i_0\in\{1,\ldots,j_0-1\}$.

We have the same observations:
\begin{enumerate}
	\item $S^<_{j_0}$ and $S^>_{j_0}$ are both contained in $S_{j_0}$ and open.
	\item $S^<_{j_0}\cap S^>_{j_0}$ is empty. Otherwise, there exist $\lambda_{j_0,0}\in S_{j_0}$, $i_1,i_2\in\{1,\ldots,j_0-1\}$ $t_0\in[0,T^*)$ such that $\lambda_{j_0}(t_0)/\lambda_{i_1}(t_0)>2^{k+1}$, $\lambda_{j_0}(t_0)/\lambda_{i_2}(t_0)<1/2^{k+1}$. Then we have $\lambda_{i_1}(t_0)/\lambda_{i_2}(t_0)<1/2^{2k+2}$, which is a contradiction due to the choice of $\lambda_{2,0}^*,\ldots,\lambda_{j_0-1,0}^*.$
	\item $S^<_{j_0}\not=S_{j_0}$, $S^>_{j_0}\not=S_{j_0}$. Suppose we have $S^<_{j_0}=S_{j_0}$. From our induction hypothesis, we know for all $i_1,i_2\in\{1,j_0+1,\ldots,k\}$, 
	$$\frac{1}{2^{k+1-j_0}}<\frac{\lambda_{i_1,0}}{\lambda_{i_2,0}}<2^{k+1-j_0}.$$
	
	Choose $\lambda_{j_0,0}>0$, such that
	$$\lambda_{j_0,0}=(2^{k+2-j_0}-\delta)\lambda_{i_0,0},$$
	where $i_0\in\{1,j_0+1,\ldots,k\}$, $\lambda_{i_0,0}=\min_{i\in\{1,j_0+1,\ldots,k\}}\lambda_{i,0}$, and $\delta>0$ is a small enough constant.
	Then for all $i_1,i_2\in\{1,j_0,\ldots,k\}$, we have
	$$\frac{1}{2^{k+1-(j_0-1)}}<\frac{\lambda_{i_1,0}}{\lambda_{i_2,0}}< 2^{k+1-(j_0-1)}.$$
	
	So there exist $\lambda_{2,0},\ldots,\lambda_{j_0-1,0}$ such that
	$$(\lambda_{1,0},\lambda_{2,0},\ldots,\lambda_{j_0-1,0},\lambda_{j_0,0},\ldots,\lambda_{k,0},\vec{b}_0,\vec{x}_0,\tilde{u}_0)\in\mathcal{C}_{j_0-1},$$
	or equivalently $\lambda_{j_0,0}\in S_{j_0}$($=S_{j_0}^<$). From our induction hypothesis, we know that
	$$(\lambda_{1,0},\lambda_{2,0}^*,\ldots,\lambda_{j_0-1,0}^*,\lambda_{j_0,0},\ldots,\lambda_{k,0},\vec{b}_0,\vec{x}_0,\tilde{u}_0)\in\mathcal{C}_1,$$
    where
    {\begin{align*}
    &\lambda_{j_0-1,0}^*=F_{j_0-1}(\vec{b}_0,\vec{x}_0,\tilde{u}_0,\lambda_{1,0},\lambda_{j_0,0},\ldots,\lambda_{k,0})>0,\\
    &\lambda_{j_0-2,0}^*=F_{j_0-2}(\vec{b}_0,\vec{x}_0,\tilde{u}_0,\lambda_{1,0},\lambda^*_{j_0-1,0},\lambda_{j_0,0},\ldots,\lambda_{k,0})>0,\\
    &\quad\vdots\\
    &\lambda_{2,0}^*=F_{2}(\vec{b}_0,\vec{x}_0,\tilde{u}_0,\lambda_{1,0},\lambda_{3,0}^*,\ldots,\lambda^*_{j_0-1,0},\lambda_{j_0,0},\ldots,\lambda_{k,0})>0.
    \end{align*}}
    
    But on the other hand, we have:
    $$\frac{\lambda_{j_0}}{\lambda_{1,0}}=\frac{\lambda_{j_0,0}}{\lambda_{i_0,0}}\times\frac{\lambda_{i_0,0}}{\lambda_{1,0}}>(2^{k+1-(j_0-1)}-\delta)2^{-k-1+j_0}>\frac{9}{5},$$
    if $\delta$ is small enough. From \eqref{22}, we know that 
    $$\frac{\lambda_{j_0}(0)}{\lambda_1(0)}>\frac{2}{3}\times\frac{\lambda_{j_0,0}}{\lambda_{1,0}}>\frac{6}{5}>\frac{10}{9},$$
    where $\{\lambda_{\ell}(t)\}_{\ell=1}^k$ are the scaling parameters of solution $u(t)$ with initial data
    $$u_0=F(\lambda_{1,0},\lambda_{2,0}^*,\dots,\lambda_{j_0-1,0}^*,\lambda_{j_0,0},\ldots,\lambda_{k,0},\vec{b}_0,\vec{x}_0,\tilde{u}_0).$$
    By Lemma \ref{Lemma21}, we reach a contradiction. The proof of $S^>_{j_0}\not=S_{j_0}$ is the same.
\end{enumerate}

Therefore, $S_{j_0}/(S^<_{j_0}\cup S^>_{j_0})$ is not empty. Then  we only need to choose 
$$F_{j_0}(\vec{b}_0,\vec{x}_0,\tilde{u}_0,\lambda_{1,0},\lambda_{j_0+1,0},\ldots,\lambda_{k,0})=\inf S_{j_0}/(S^<_{j_0}\cup S^>_{j_0}),$$
which ends the argument of the induction and concludes the proof of the Lemma.
\end{proof}

{Now we turn back to the proof of Proposition \ref{211}. We call parameters $(\vec{b}_0, \vec{x}_0, \tilde{u}_0, \lambda_{1,0})$ \emph{admissible} if and only if there exist $\lambda_{2,0},\ldots,\lambda_{k,0}>0$ such that
$$(\lambda_{1,0},\lambda_{2,0},\ldots,\lambda_{k,0},\vec{b}_0,\vec{x}_0,\tilde{u}_0)\in\mathcal{C}_{k}.$$

Recall that this means 
$$u_0=\sum_{i=1}^k\frac{1}{\lambda_{i,0}^{\frac{2}{p-1}}}Q_{b_{i,0}}\bigg(\frac{x-x_{i,0}}{\lambda_{i,0}}\bigg)+\tilde{u}_0,$$
satisfies \eqref{27}--\eqref{29} with \eqref{28} replaced by
$$\frac{1}{2}<\frac{\lambda_{i,0}}{\lambda_{j,0}}<2.$$

Then we define $\mathcal{O}_{k,p}\subset H^2$ as following:
\begin{equation*}
\mathcal{O}_{k,p}=\big\{F(\lambda_{1,0},\lambda_{2,0}^*,\ldots,\lambda_{k,0}^*,\vec{b}_0,\vec{x}_0,\tilde{u}_0)|(\vec{b}_0, \vec{x}_0, \tilde{u}_0, \lambda_{1,0})\text{ is admissible}\big\},
\end{equation*}
where 
\begin{align*}
&\lambda_{k,0}^*=F_k(\vec{b}_0,\vec{x}_0,\tilde{u}_0,\lambda_{1,0})>0,\\
&\lambda_{k-1,0}^*=F_{k-1}(\vec{b}_0,\vec{x}_0,\tilde{u}_0,\lambda_{1,0},\lambda_{k,0}^*)>0,\\
&\quad\vdots\\
&\lambda_{2,0}^*=F_{2}(\vec{b}_0,\vec{x}_0,\tilde{u}_0,\lambda_{1,0},\lambda_{3,0}^*,\ldots,\lambda_{k,0}^*)>0.
\end{align*}}
It is easy to see that $\mathcal{O}_{k,p}$ is an infinite set.

On the other hand, the choice of $\{\lambda_{j,0}^*\}_{j=2}^k$ implies that the scaling parameters of solution $u(t)$ with initial data in $\mathcal{O}_{k,p}$ satisfy:
$$\frac{1}{2^{k+1}}\leq \frac{\lambda_i(t)}{\lambda_j(t)}\leq2^{k+1},$$
for all $1\leq i,j\leq k$. This concludes the proof of Proposition \ref{211}.
\end{proof}

{\begin{remark}
From the construction of the subset $\mathcal{O}_{k,p}$, if one can show that the functions $F_j$ ($j=2,\ldots,k$) are actually in $C^1$, then the subset $\mathcal{O}_{k,p}$ has a codimension of $k-1$ in $H^2$. But this seems to be nontrivial.
\end{remark}

\begin{remark}
From the proof of Lemma \ref{212}, the choice of $\lambda^*_{j,0}$ ($j=2,\ldots,k$) may not be unique%
\footnote{The set $S_{j_0}/(S_{j_0}^<\cup S^>_{j_0})$ may contains more than one element.}%
. Here in Lemma \ref{212}, we basically chose the ``infimum" of all possible $\lambda^*_{j,0}$, which ensures that the functions $F_j$ are all continues. This argument is crucial to show that the blow-up points depend continuously on the initial data.
\end{remark}}

For this nonempty subset $\mathcal{O}_{k,p}$, the most important feature is that for $u_0\in\mathcal{Q}_{k,p}$, the estimates \eqref{BA1}--\eqref{BA7} can be improved on $[0,T^*)$. Hence from Remark \ref{rem1}, we have $T^*=T$. More precisely, we have: 
\begin{proposition}\label{prop}
If $u_0\in \mathcal{O}_{k,p}$, then the following estimates hold on $[0,T^*)$:
\begin{gather}
0<\lambda_j(t)<2,\label{BB1}\\
\frac{1}{2^{k+1}}\leq\frac{\lambda_i(t)}{\lambda_j(t)}\leq2^{k+1},\label{BB2}\\
|x_i(t)-x_j(t)|\geq b_c^{-75}, \text{ for all }i\not=j,\label{BB3}\\
|b_{j}(t)-b_c|\leq b_c^{\frac{3}{2}+2\nu},\label{BB4}\\
\|\varepsilon_j(t)\|_{L^{p_0}}\leq b_c^{\frac{13}{28}},\label{BB5}\\
\big\|\big(\varepsilon_j(t)\big)_y\big\|_{L^2}\leq b_c^{\frac{3}{4}},\label{BB6}\\
\mathcal{N}_j(t)\leq b_c^{3+8\nu},\label{BB7}
\end{gather}
\end{proposition}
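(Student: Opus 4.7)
The plan is a bootstrap argument. Assuming the weaker a priori bounds \eqref{BA1}--\eqref{BA7} hold on $[0,T^*)$, I would show the strictly tighter bounds \eqref{BB1}--\eqref{BB7} on the same interval; the standard continuity argument announced in Remark \ref{rem1} then forces $T^*=T$.

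The improvement \eqref{BA2} $\Rightarrow$ \eqref{BB2} is immediate and requires no new input: it is exactly the conclusion of Proposition \ref{211} applied to $u_0\in\mathcal{O}_{k,p}$, which is precisely why the topological construction of $\mathcal{O}_{k,p}$ was carried out above.

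For the remaining improvements I would work bubble by bubble with $\varepsilon_j$. For \eqref{BB7}, I introduce the Lyapunov functional $\mathcal{F}_j$ of \eqref{105} (to be built in Section 3), integrate the dissipation inequality
$$\frac{d}{ds_j}\mathcal{F}_j+\frac{\mu}{B}\|\varepsilon_j\|_{H^1_{\rm loc}}^2\lesssim b_c^{7/2}$$
from $0$ to $s_j$, and exploit $\mathcal{F}_j(0)\lesssim\|\varepsilon_j(0)\|_{H^1}^2\lesssim b_c^{60}$ supplied by \eqref{24}. The coercivity of $\mathcal{F}_j$ on the subspace selected by the orthogonality conditions \eqref{OC} then converts this into the $\mathcal{N}_j$ bound \eqref{BB7}. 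For \eqref{BB6} I combine the interior estimate \eqref{104} with \eqref{BB7}, noting that $\varphi_B'$ is bounded below on $(-B,B)$ and that outside this window $(\varepsilon_j)_y$ can only receive contributions from the other bubbles, which are exponentially small there by \eqref{ASB}--\eqref{ASB1}. For \eqref{BB5}, mass conservation applied to the geometrical decomposition \eqref{GD} gives a global $L^2$ bound on $\tilde u$, and Gagliardo--Nirenberg interpolation with \eqref{BB6} yields the $L^{p_0}$ estimate. For \eqref{BB4} I rewrite \eqref{ME3} as a scalar ODE $(\tilde b_j)_{s_j}+c_pb_c\tilde b_j=O(b_c^{5/2+4\nu})$, integrate with integrating factor $e^{c_pb_cs_j}$, and use $|\tilde b_j(0)|\leq b_c^4$ from \eqref{225}. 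For \eqref{BB1}, \eqref{ME1} combined with \eqref{BB4} gives $(\lambda_j)_{s_j}=-b_j\lambda_j(1+o(1))<0$, so $\lambda_j$ is monotone decreasing in $s_j$ and hence stays below $\lambda_j(0)<2$. For \eqref{BB3}, integrating \eqref{ME2} in physical time yields $|x_j(t)-x_j(0)|\lesssim b_c^{-1}$, and combining with \eqref{23} produces $|x_i(t)-x_j(t)|\geq b_c^{-80}-O(b_c^{-1})\geq b_c^{-75}$.

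The hardest step will be establishing the dissipation inequality \eqref{105} and the coercivity of $\mathcal{F}_j$ in the multi-bubble setting. The equation \eqref{ME} for $\varepsilon_j$ contains three blocks of bubble--bubble cross terms involving $Q_{b_i}$, $P_{b_i}$, $\Phi_{b_i}$ and their translates at sites $i\neq j$; controlling their scalar products against $\psi_B$-weighted test functions requires the very large separation from \eqref{BB3} (or \eqref{BA4}), the restriction $k\leq c|\log(p-5)|$ so that $10^k\lesssim b_c^{-\nu/4}$, and the decay bounds \eqref{ASB}--\eqref{ASB1}. Once these cross terms are shown to contribute at most $O(b_c^{10})$, the single-bubble Lyapunov/monotonicity machinery of \cite{L1} and \cite{MMR1} applies essentially unchanged to yield the required estimates on $\varepsilon_j$.
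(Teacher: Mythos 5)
Your overall skeleton --- bootstrap on $[0,T^*)$, \eqref{BB2} read off from Proposition \ref{211}, monotone decrease of $\lambda_j$ for \eqref{BB1}, integration of \eqref{ME2} for \eqref{BB3}, the damped ODE for $\tilde b_j$ for \eqref{BB4}, and the Lyapunov functional $\mathcal{F}_j$ with its coercivity for \eqref{BB7} --- coincides with the paper's proof. There are, however, two genuine gaps, in the closing of \eqref{BB6} and of \eqref{BB5}.

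For \eqref{BB6} you combine the local estimate \eqref{104} with $\mathcal{N}_j$ and discard the rest of the line on the grounds that ``outside this window $(\varepsilon_j)_y$ can only receive contributions from the other bubbles, which are exponentially small there by \eqref{ASB}--\eqref{ASB1}.'' This confuses the error with the profiles: $\varepsilon_j(t,y)=\lambda_j^{2/(p-1)}\tilde u(t,\lambda_j y+x_j)$ is the rescaling of the \emph{global} error $\tilde u$, from which all $k$ bubbles have already been subtracted. Near the site of bubble $i\neq j$ it is of the same size as $\varepsilon_i$, and on the long intervals between bubbles it is a small but otherwise arbitrary $H^2$ function transported by the flow; the decay bounds \eqref{ASB}--\eqref{ASB1} say nothing about it. Neither \eqref{104} (which covers only $\kappa B<y<b_c^{-20}$) nor $\mathcal{N}_j$ (whose weight $\varphi_B'$ vanishes for $y>B$ and is exponentially small for $y<-B$) sees this far field, so \eqref{BB6} cannot be closed from these local quantities. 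The paper instead derives the \emph{global} bound $\int(\varepsilon_j)_y^2\lesssim b_c^{3/2+\nu/2}$ (estimate \eqref{31}) from energy conservation, using that $E(u_0)$ is small, that the $k$ localized profiles have essentially disjoint supports so their energies decouple, and that $|E(Q_{b_i})|\lesssim b_c^3+|\tilde b_i|$; this conservation-law input is the missing step.

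For \eqref{BB5}, mass conservation plus Gagliardo--Nirenberg is quantitatively insufficient because the problem is $L^2$-supercritical. Each bubble carries $L^2$ mass $\lambda_i^{2\sigma_c}\|Q_{b_i}\|_{L^2}^2$ with $\sigma_c>0$, which is released into $\tilde u$ as $\lambda_i\downarrow0$, so mass conservation yields at best $\|\varepsilon_j(t)\|_{L^2}=O(1)$ (there is no smallness assumption on $\|\varepsilon_j(0)\|_{L^2}$ in \eqref{24} either, only on the $L^{(p-1)/2}$ norm). The one-dimensional interpolation $\|f\|_{L^{5/2}}\lesssim\|f\|_{L^2}^{9/10}\|f_y\|_{L^2}^{1/10}$ together with \eqref{BB6} then gives only $O(b_c^{3/40})$, far short of the required $b_c^{13/28}$. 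The paper closes \eqref{BB5}, as in \cite{L1}, by a refined Strichartz estimate from \cite{Fos} applied to the Duhamel formulation; some genuinely dispersive input of this kind is unavoidable here.
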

{From a standard bootstrap argument, we know that $T^*=T$. i.e. the estimates \eqref{BB1}--\eqref{BB7} hold for all $t\in [0,T)$. }

{ Indeed, if $T^*<T$, then since \eqref{BB1}--\eqref{BB7} are strictly stronger than \eqref{BA1}--\eqref{BA7}, together with the continuity of the geometrical parameters and the the error term, we can see that the geometrical decomposition \eqref{GD} and \eqref{BA1}--\eqref{BA7} actually hold on $[0,T^*+\delta)$ for some $\delta>0$ small enough. This contradicts with the definition of $T^*$, since it is the maximal time that geometrical decomposition and \eqref{BA1}--\eqref{BA7} hold.
We will prove Proposition \ref{prop} in Section 4.1.

We know from Proposition \ref{prop} that \eqref{ME1}--\eqref{ME3} are approximations of \eqref{FODE}. Hence, after integrating (following from similar arguments as in \cite[Section 6]{L1}), they have similar behaviors. More precisely, we have:}
{\begin{proposition}\label{prop1}
For all $u_0\in\mathcal{O}_{k,p}$, we have:
\begin{enumerate}
\item Finite time blow-up with self-similar rate: We have $T<+\infty$, and for all $j=1,\ldots,k$ and $t\in[0,T)$:
\begin{equation}
\label{112}
3(1-\nu)b_c\leq \frac{\lambda_j^3(t)}{T-t}\leq 3(1+\nu)b_c.
\end{equation}
\item The translation parameters converge to pairwise distinct points: For all $j=1,\ldots,k$,
\begin{gather} 
x_j(t)\rightarrow x_j(T),\;\text{as }t\rightarrow T,\label{113}\\
|x_j(0)-x_j(T)|\lesssim \frac{1}{b_c},\label{1131}\\
x_i(T)\not=x_j(T)\;\text{for all }1\leq i\not=j\leq k.\label{1132}
\end{gather}
\item Convergence in subcritical Lebesgue spaces: for all $q\in[2,\frac{p-1}{2})$,
\begin{equation}\label{114}
u(t)\rightarrow u^*\; \text{in } L^q.
\end{equation}
\item For $R$ small enough, we have:
\begin{gather}
(1-\delta(p))\int\mathcal{Q}_p^2\leq \frac{1}{R^{2\sigma_c}}\int_{|x-x_j(T)|<R}|u^*|^2\leq (1+\delta(p))\int\mathcal{Q}_p^2,\label{115}\\
\limsup_{R\rightarrow0}\frac{1}{R^{2\sigma_c}}\int_{|x-z|<R}|u^*|^2\leq \delta(p),\quad\text{for all }z\notin\{x_1(T),\ldots,x_k(T)\},\label{116}
\end{gather}
which implies that the blow-up set of $u(t)$ is exactly $\{x_1(T),\ldots,x_k(T)\}$.
\item The map from $\mathcal{O}_{k,p}$ to $\mathbb{R}^k$:
\begin{equation}
\label{106}
u_0\mapsto (x_1(T),\ldots,x_k(T))
\end{equation}\
is continuous under the topology of $H^1$ and $\mathbb{R}^k$.
\end{enumerate}
\end{proposition}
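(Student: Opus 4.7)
The overall plan is to adapt the strategy of \cite[Section 6]{L1} from the single-bubble case to the multi-bubble setting, using the bootstrap estimates \eqref{BB1}--\eqref{BB7} from Proposition \ref{prop} together with the modulation estimates \eqref{ME1}--\eqref{ME3}. The essential new ingredient is that the $k$ bubbles evolve essentially independently (the interaction being exponentially small by the exponential decay of $\mathcal{Q}_p$ and \eqref{BB3}), yet the ratio bound \eqref{BB2} forces all scaling parameters to blow up simultaneously, producing a common blow-up time $T$ with $k$ distinct singular points.

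For part (1), I would first rewrite \eqref{ME1} in the $t$-variable as $\tfrac{d}{dt}(\lambda_j^3) = -3b_j + O(b_c^2 + \mathcal{N}_j^{1/2})$; then \eqref{BB4} and \eqref{BB7} give $\tfrac{d}{dt}(\lambda_j^3) = -3b_c(1+o(1))$, and integration yields $\lambda_j^3(t) = \lambda_j^3(0) - 3b_c t(1+o(1))$. Define $T_j$ by $\lambda_j(T_j)=0$; since \eqref{BB2} forces $\lambda_j/\lambda_i$ to stay bounded throughout, all the $\lambda_j$ must vanish at the same instant, hence $T_1 = \cdots = T_k =: T < +\infty$, which is also the maximal lifetime (since $\|u_x\|_{L^2}\gtrsim\lambda_j^{\sigma_c-1}\to+\infty$), and one reads off the self-similar bound \eqref{112} directly. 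For part (2), I would use \eqref{ME2} to get $\tfrac{dx_j}{dt} = \lambda_j^{-2}(1+o(1))$ and note that, by \eqref{112}, $\lambda_j^{-2} \sim (3b_c)^{-2/3}(T-t)^{-2/3}$ is integrable near $T$, giving \eqref{113} along with the quantitative bound $|x_j(t)-x_j(T)|\lesssim b_c^{-2/3}(T-t)^{1/3}$. Evaluating at $t=0$ with $T\sim b_c^{-1}$ yields \eqref{1131}. The distinctness \eqref{1132} then follows by combining \eqref{1131} with \eqref{BB3}: $|x_i(T)-x_j(T)| \geq b_c^{-75} - 2Cb_c^{-1} > 0$ for $b_c$ small.

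For parts (3)--(4), I would first show $L^q$ convergence for $q\in[2,(p-1)/2)$: each bubble has $L^q$-norm $\lambda_j^{1/q - 2/(p-1)}$ with positive exponent, hence goes to $0$; the $L^q$ norm of $\tilde u$ is controlled via the change of variables $\tilde u = \lambda_j^{-2/(p-1)}\varepsilon_j((\cdot-x_j)/\lambda_j)$ combined with \eqref{BB5}--\eqref{BB6} and Gagliardo--Nirenberg, yielding smallness of $\|\tilde u\|_{L^q}$. A Cauchy argument in $L^q$ using the duality with the equation, exactly as in \cite[Section 6.1]{L1} bubble by bubble, then gives $u(t)\to u^*$ in $L^q$. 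For the concentration \eqref{115}, I would use that near $x_j(T)$ the solution is driven by the $j$-th bubble $\lambda_j^{-2/(p-1)} V_{b_j}((x-x_j)/\lambda_j)$ plus small errors, and exploit the explicit slow-decay tail $V_b(y)\sim |y|^{\sigma_c-1/2}$ (Koch's profile) to compute that the pointwise profile of $u^*$ near $x_j(T)$ behaves like $|x-x_j(T)|^{\sigma_c-1/2}$, so that $|u^*|^2\sim |x-x_j(T)|^{2\sigma_c-1}$ and the left-hand side of \eqref{115} converges to $\int\mathcal{Q}_p^2$ up to $\delta(p)$; the bounds on the other bubbles and on $\tilde u$ in terms of $b_c$ show they contribute at most $\delta(p)$. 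For \eqref{116}, a point $z\notin\{x_1(T),\ldots,x_k(T)\}$ is at positive distance from every blow-up point; by continuity of the $x_j(t)$ and the exponential localization of the $Q_{b_j}$-part on scale $\lambda_j\to 0$, only the (small) error term contributes, giving the desired smallness.

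For part (5), I would invoke the $H^1$-continuity of the flow together with the continuity statement \eqref{continuity} for the geometrical decomposition, and pass to the limit in the pointwise formula $x_j(T) = x_j(0) + \int_0^T \lambda_j^{-2}(1+o(1))\,d\tau$ using the uniform integrable bound on the integrand together with the continuity of $T = T(u_0)$ (which itself follows from \eqref{112} applied to both $u_0$ and $u_{0,n}$). The bounded ratio \eqref{BB2} and the definition of $\mathcal{O}_{k,p}$ via the continuous functions $F_2,\ldots,F_k$ transfer the continuity from the geometrical parameters to the blow-up points. The delicate step I expect to be the most technical is the fourth one: controlling the error term $\tilde u$ (which, unlike the single bubble case, is a sum of $k$ small pieces) finely enough to isolate each bubble's contribution to the concentration without losing the precise constant $\int\mathcal{Q}_p^2$; this is where the restriction $k\leq c|\log(p-5)|$ and the large-distance assumption \eqref{BB3} both enter crucially.
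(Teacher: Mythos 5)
Your proposal follows essentially the same route as the paper: integrate the modulation law $(1-\nu)b_c\leq -(\lambda_j)_t\lambda_j^2\leq(1+\nu)b_c$ to get $T<+\infty$ and, after using the blow-up criterion plus \eqref{BB2} to force all $\lambda_j\to0$ simultaneously, the self-similar law \eqref{112}; integrate $(x_j)_t\sim\lambda_j^{-2}\sim(b_c(T-t))^{-2/3}$ for \eqref{113}--\eqref{1131} and combine with the separation bound for \eqref{1132}; defer \eqref{114}--\eqref{116} to the single-bubble arguments of \cite{L1}; and reduce \eqref{106} to continuity of the blow-up time together with \eqref{continuity}, exactly as in the paper's lemma. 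The only cosmetic deviation is your heuristic for \eqref{115} via the pointwise tail $|u^*|\sim|x-x_j(T)|^{\sigma_c-1/2}$, whereas the cited argument works with the localized mass of the rescaled bubble at well-chosen times; since both you and the paper leave those details to \cite{L1}, this does not affect the verdict.
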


\begin{remark}
Proposition \ref{prop1} implies that for all $u_0\in\mathcal{O}_{k,p}$, the corresponding solution will blow up in finite time with self-similar rate, and has exactly $k$ blow-up points.
\end{remark}}

\section{Monotonicity tools and estimates on the error term}
{ In this section, we will derive some crucial estimates on the error tern $\varepsilon_j$, which imply the bootstrap bounds \eqref{BB6} and \eqref{BB7} immediately. Such estimates are similar to \cite[Lemma 4.1, Proposition 5.2]{L1}, and are the continuation of the monotonicity formula developed in \cite{MM3} and \cite{MMR1}.}
\subsection{Monotonicity of the energy.}{In this subsection we will give a control of $\|(\varepsilon_j)_y\|_{L^2}$ and $\|(\varepsilon_j)_y\|_{L^2(b_c^{-20}>y>\kappa B)}$, which implies the bootstrap bound \eqref{BB6}. These estimates provide a good control of the $L^{\infty}$ norm of $\varepsilon_j$ on the right.}
\begin{proposition}
	For all $j=1,\ldots,k$ the following estimates hold for all $s_j\in[0,s_j^*)$:
	\begin{gather}
	\int \big(\varepsilon_j(s_j)\big)_y^2 \lesssim b_c^{\frac{3}{2}+\frac{\nu}{2}}\label{31},\\
	\int_{\kappa B<y<b_c^{-20}} \big(\varepsilon_j(s_j)\big)_y^2 \lesssim b_c^{\frac{55}{7}}\label{32}.
	\end{gather}
\end{proposition}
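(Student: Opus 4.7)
The plan is to mirror the strategy of \cite[Lemma 4.1]{L1}, adapting the single-bubble monotonicity arguments of \cite{MM3, MMR1} to the multi-bubble setting. The global bound \eqref{31} should come from energy conservation, while the localized bound \eqref{32} should come from a Kato-type monotonicity functional exploiting the hidden parabolic effect of the Airy equation on the right. The key reason this adaptation is clean is that the inter-bubble separation \eqref{BB3} together with the restriction $k \leq c|\log(p-5)|$ makes all cross-bubble contributions in the equation \eqref{ME} of $\varepsilon_j$ negligible when tested against weights that decay or live near the support of the $j$-th bubble.

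For \eqref{31}, I would proceed as follows. Fix $j$, and rescale energy conservation $E(u(t)) = E(u_0)$ by writing $\lambda_j^{2(1-\sigma_c)} E(u)$ in terms of the profile $Q_{b_j} + \varepsilon_j$ centered at $x_j$, using the decomposition \eqref{GD}. The cross terms involving $Q_{b_i}$ at $x_i$ with $i \neq j$ are supported in regions at distance at least $b_c^{-75}/\lambda_j \gtrsim b_c^{-10}$ from the support of $Q_{b_j}$ (after rescaling, using \eqref{BB2}--\eqref{BB3}), so their contribution to $\lambda_j^{2(1-\sigma_c)} E(u)$ is bounded by $e^{-b_c^{-5}} \ll b_c^{100}$. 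Expanding the cubic/higher-order nonlinear term in $\varepsilon_j$ and using Lemma \ref{SSP}(3) together with orthogonality \eqref{OC}, one extracts
\[
\tfrac12 \|(\varepsilon_j)_y\|_{L^2}^2 = \lambda_j^{2(1-\sigma_c)} E(u_0) - E(Q_{b_j}) + (\text{lower order in } \varepsilon_j) + O(b_c^{100}).
\]
The initial smallness \eqref{29} gives $|E(u_0)| \lesssim b_c^3$, the bound $|E(Q_{b_j})| \lesssim b_c^3 + |\tilde b_j| \lesssim b_c^{3/2+\nu}$ comes from Lemma \ref{SSP}(3) with \eqref{BA3}, and the lower-order terms in $\varepsilon_j$ are absorbed using \eqref{BA5}, \eqref{BA6} and Gagliardo--Nirenberg \eqref{210}. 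Combining, one recovers \eqref{31} with room to spare.

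For \eqref{32}, I would set up a monotonicity functional of the form
\[
\mathcal{G}_j(s_j) = \int \bigl[(\varepsilon_j)_y^2 \, \psi + \varepsilon_j^2 \, \zeta\bigr](y)\, dy,
\]
where $\psi$ is a smooth nondecreasing cut-off localizing on the interval $[\kappa B, b_c^{-20}]$ (translated and regularized as in \cite[Section 4]{L1}), and $\zeta$ a compatible lower-order weight chosen so that the quadratic form generated by the linearized gKdV flow is coercive and dissipative. Differentiating $\mathcal{G}_j$ in $s_j$ and plugging in the equation \eqref{ME} produces three types of contributions: (i) the ``good'' Airy dispersion terms of the form $-\int 3(\varepsilon_j)_{yy}^2 \psi'$ coming from $(L\varepsilon_j)_y$, which have a sign because $\psi' \geq 0$; (ii) modulation contributions, which by \eqref{ME1}--\eqref{ME3} and the bootstrap assumptions \eqref{BA3}, \eqref{BA7} are of size $b_c^{7/2+O(\nu)}$; and (iii) the cross-bubble terms, which because $\psi$ lives well to the right of $-2b_c^{-1}$ and well to the left of the next bubble center (distance $\geq b_c^{-70}/\lambda_j$), are bounded by $e^{-b_c^{-5}}$ uniformly. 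Integration in $s_j$ from $0$ to $s_j$, combined with the bound $\mathcal{G}_j(0) \lesssim b_c^{100}$ from \eqref{24}, the Airy dissipation, and the time-scale estimate $s_j \lesssim b_c^{-1}$ inherited from \eqref{BA1}--\eqref{BA3}, produces the stated bound $\mathcal{G}_j(s_j) \lesssim b_c^{55/7}$, which implies \eqref{32}.

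The main obstacle, as in the single-bubble analysis of \cite{L1}, is showing that the dissipative term really dominates once all the error contributions coming from the localized profile $Q_{b_j}$ itself are inserted; here the scalar products with $\Phi_{b_j}$ must be carefully estimated using \eqref{APP} and the orthogonality conditions \eqref{OC}, so that no term of size worse than $b_c^{7/2}$ survives. The multi-bubble setting adds only one genuinely new difficulty, namely verifying that every cross-term in \eqref{ME} involving $Q_{b_i}, P_{b_i}, \Phi_{b_i}$ for $i \neq j$, when paired against $(\varepsilon_j)_y \psi$ or $\varepsilon_j \zeta$, is bounded by $b_c^{100}$; this is guaranteed by \eqref{BB3} together with the exponential/polynomial decay estimates \eqref{ASB}--\eqref{ASB1} and the constraint $10^k \leq b_c^{-\nu/4}$, exactly as in the modulation argument.
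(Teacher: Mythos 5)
Your treatment of \eqref{31} follows the paper's proof essentially verbatim: rescaled energy conservation, decoupling of the bubbles by disjoint supports (with the factor $10^k\leq b_c^{-\nu/4}$ absorbing the sum over bubbles), and absorption of the lower-order terms in $\varepsilon_j$. That part is fine. For \eqref{32}, however, there are two genuine gaps.

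First, the quantitative bookkeeping does not close. You integrate a differential inequality with source terms of size $b_c^{7/2+O(\nu)}$ over $s_j\in[0,s_j^*)$ and claim this yields $b_c^{55/7}$. But $b_c^{55/7}\ll b_c^{7/2}$, and $s_j^*$ is \emph{not} $\lesssim b_c^{-1}$: since $\lambda_j^3\sim 3b_c(T-t)$, one has $s_j(t)\sim\frac{1}{3b_c}\log\frac{T}{T-t}\to+\infty$ as $t\to T$. An undamped source of size $b_c^{7/2}$ per unit $s_j$ therefore destroys the target bound. In the paper the monotone quantity is the localized energy $\widetilde E(\tau)$ of the full solution $u$, its derivative is bounded by $Cb_c^{62/7}\lambda_j^{-3-2(1-\sigma_c)}$, and integration in \emph{physical} time via $\int_0^{t}\lambda_j^{-3-2(1-\sigma_c)}\,d\tau\lesssim b_c^{-1}\lambda_j(t)^{-2(1-\sigma_c)}$ produces exactly the exponent $62/7-1=55/7$. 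The profile terms are not of size $b_c^{7/2}$ here: on the support of the weight (which lives at $y\gtrsim\kappa B$) they are exponentially small, and the true dominant errors are $\int_{y>\kappa B}|\varepsilon_j|^{p+1}\Psi_B\lesssim b_c^{55/7}$ and the term discussed next.

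Second, and more seriously, you miss the one genuinely new obstruction of the multi-bubble case, which is not the cross-terms in \eqref{ME} (those are indeed $O(e^{-b_c^{-5}})$) but the right-hand cutoff of the weight. A nondecreasing $\psi$ cannot ``localize on $[\kappa B,b_c^{-20}]$''; the weight must be brought back to zero somewhere in the inter-bubble region, and there $\psi'<0$. Differentiating $\int(\varepsilon_j)_y^2\psi$ along the flow, the Airy part produces $-\tfrac32\int(\varepsilon_j)_{yy}^2\psi'$, which on the cutoff region becomes a \emph{positive} term in $(\varepsilon_j)_{yy}^2$ --- this is the term $I$ involving $\int u_{xx}^2g_2$ in \eqref{311}. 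No $H^1$ quantity controls it. The paper handles it by a separate bootstrap on $\int u_{xx}^2$ via the second-order pseudo-conservation law $E_2(\tau)=\int u_{xx}^2-\frac{5p}{3}\int u_x^2|u|^{p-1}$ (see \eqref{312}--\eqref{313}); this is precisely why the initial data must be taken in $H^2$ rather than $H^1$ in the multi-bubble setting. Your proposal never invokes $u_{xx}$ or the $H^2$ hypothesis, so the monotonicity argument as written cannot close.
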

\begin{proof}
	The proof of \eqref{31} is a consequence of the energy conservation law. Indeed, we have:
	\begin{multline}
	2\lambda_j(s_j)^{2(1-\sigma_c)}E(u_0)=2E(\mathfrak{Q}_j)+\sum_{i=1}^k\bigg(\frac{\lambda_j(s_j)}{\lambda_i(s_j)}\bigg)^{2(1-\sigma_c)}\int(\varepsilon_i)_y(Q_{b_i})_y\\+\int(\varepsilon_j)_y^2-\frac{2}{p+1}\int\big{(}(\mathfrak{Q}_j+\varepsilon_j)^{p+1}-\mathfrak{Q}_j^{p+1}\big{)},\\
	\end{multline}
	where 
	$$\mathfrak{Q}_j(s_j,y)=\sum_{i=1}^k\bigg(\frac{\lambda_j(s_j)}{\lambda_i(s_j)}\bigg)^{\frac{2}{p-1}}Q_{b_i}\bigg(\frac{\lambda_j(s_j)y+x_j(s_j)-x_i(s_j)}{\lambda_i(s_j)}\bigg).$$
	
	For the terms appear in the above summation, their supports are pairwise disjoint. So we have:
	\begin{align*}
	\big|E\big(\mathfrak{Q}_j(s_j,\cdot)\big)\big|&=\Bigg|\sum_{i=1}^kE\Bigg(\bigg(\frac{\lambda_j(s_j)}{\lambda_i(s_j)}\bigg)^{\frac{2}{p-1}}Q_{b_i}\bigg(\frac{\lambda_j(s_j)\cdot+x_j(s_j)-x_i(s_j)}{\lambda_i(s_j)}\bigg)\Bigg)\Bigg|\\
	&\leq \sum_{i=1}^k6^{k+1}(|b_i-b_c|+b_c^3)\leq 10^kb_c^{\frac{3}{2}+\nu}\leq b_c^{\frac{3}{2}+\frac{\nu}{2}}.
	\end{align*}
Here we use the fact that {$10^k\leq b_c^{-\nu/4}$}. The rest terms can be estimated similarly like what we do in \cite{L1}%
\footnote{See details in the first part of Section 4 in \cite{L1}.}%
, thus we conclude the proof of \eqref{31}.

The proof of \eqref{32} is quite different from the single blow-up point case. We first choose 2 smooth functions $\theta$ and $\eta$, such that $\theta>0$, $\theta(y)=e^{-|y|}$ for $|y|>1$ and $\eta(y)=1$ for $y<1$, $\eta(y)=e^{-y}$ for $y>2$.

We introduce the following notations:
$$\Theta(y)=\frac{\int_{-\infty}^{y}\theta(y')\,dy'}{\int_{-\infty}^{+\infty}\theta(y')\,dy'},\quad\tilde{y}=\frac{y-\kappa B}{\sqrt{B}},$$
$$\Psi_B(y)=\Theta(\tilde{y})\eta(b_c^{20}y).$$

Then we assume that for all $t\in[0,T^*)$, $j\in\{1,\ldots,k\}$, 
\begin{equation}\label{33}
\int_{y>\kappa B}\big[\varepsilon_j(t,y)\big]^2\Psi_B(y)\,dy\leq b_c^{\frac{15}{2}}.
\end{equation}
Since this estimate is satisfied for $t=0$, so we only need to improve this estimate to:
\begin{equation}\label{34}
\int_{y>\kappa B}\big[\varepsilon_j(t,y)\big]^2\Psi_B(y)\,dy\lesssim b_c^{\frac{55}{7}}.
\end{equation}

To do this, we fix $t\in[0,T^*)$. For $\tau\in[0,t]$, we introduce the localized energy:
$$\widetilde{E}(\tau)=\int\Big{(}\frac{1}{2}|u_x(\tau)|^2-\frac{1}{p+1}|u(\tau)|^{p+1}\Big{)}\Psi_B\bigg(\frac{x-x_j(\tau)}{\lambda_j(\tau)}\bigg)\,dx.$$
A direct computation shows:
\begin{equation}\label{36}\begin{split}
&\quad\lambda_j(t)^{2(1-\sigma_c)}\widetilde{E}(t)\\
&=\frac{1}{2}\int\big{(}(Q_{b_j})_y+(\varepsilon_j)_y\big{)}^2\Psi_B(y)\,dy-\frac{1}{p+1}\int|Q_{b_j}+\varepsilon_j|^{p+1}\Psi_B(y)\,dy+O(b_c^{20})\\
&\gtrsim \int_{y>\kappa B}(\varepsilon_j)_y^2(t)\Psi_B-e^{-\frac{\kappa \sqrt{B}}{2}}\int_{y<\frac{\kappa B}{2}}\Big{(}|(Q_{b_j})_y|^2+|Q_{b_j}|^{p+1}\Big{)}\\
&\quad-\int_{y>\frac{\kappa B}{2}}\Big{(}|(Q_{b_j})_y|^2+|Q_{b_j}|^{p+1}\Big{)}-\int_{y>\frac{\kappa B}{2}}|\varepsilon|^{p+1}\Psi_B-e^{-\frac{\kappa \sqrt{B}}{2}}\int_{y<\frac{\kappa B}{2}}|\varepsilon|^{p+1}-b_c^{20},
\end{split}\end{equation}
where we use the fact that $\Psi_B(y)\leq e^{-\kappa \sqrt{B}/4}$ if $y<\kappa B/2$.

Next, we have:
$$\int_{y>\frac{\kappa B}{2}}|\varepsilon|^{p+1}\Psi_B\leq \bigg\|\varepsilon_j(t,\cdot)^{\frac{p_0}{2}+1}\eta(b_c^{20}\cdot)^{\frac{p_0+2}{2(p+1-p_0)}}\bigg\|^{\frac{2(p+1-p_0)}{p_0+2}}_{L^{\infty}(y>\kappa B)}\int \varepsilon_j^{p_0}.$$
For $y>\kappa B$, we have the following estimate:
\begin{align*}
&\quad\;\bigg||\varepsilon_j(t,y)|^{\frac{p_0}{2}+1}\eta(b_c^{20}y)^{\frac{p_0+2}{2(p+1-p_0)}}\bigg|\leq\bigg||\varepsilon_j(t,y)|^{\frac{p_0}{2}+1}\eta(b_c^{20}y)^{\frac{1}{2}}\bigg|\\
&\leq \bigg|\int_y^{+\infty}(\varepsilon_j)_y|\varepsilon_j|^{\frac{p_0}{2}}(t,y')\eta(b_c^{20}y')^{\frac{1}{2}}\,d y'\bigg|+\bigg|b_c^{20}\int_{y}^{+\infty}|\varepsilon_j(t,y')|^{\frac{p_0}{2}+1}\frac{\eta'(b_c^{20}y')}{\eta(b_c^{20}y')^{\frac{1}{2}}}\,dy'\bigg|\\
&\lesssim \|\varepsilon_j\|_{L^{p_0}}^{\frac{p_0}{2}}\bigg(\int_{y'>\kappa B}(\varepsilon_j)_y^2\eta(b_c^{20}y')\,dy'\bigg)^{\frac{1}{2}}+b_c^{10}\|\varepsilon_j\|_{L^{p_0+2}}^{\frac{p_0+2}{2}}\bigg(\int\frac{\big(\eta'(y')\big)^2}{\eta(y')}\,dy'\bigg)^{\frac{1}{2}}\\
&\lesssim b_c^{\frac{173}{40}},
\end{align*}
where we use \eqref{BA5}, \eqref{210}, \eqref{33} and the basic fact that $\eta(b_c^{20}y)\leq2\Psi_B(y)$ for $y>\kappa B$. Combining the above 2 estimates, we have (recall that $p_0=\frac{5}{2}$ and $p$ is slightly larger than $5$): 
\begin{equation}\label{37}
\int_{y>\kappa B}|\varepsilon_j|^{p+1}\Psi_B\lesssim b_c^{\frac{173p-156}{90}}\leq b_c^{\frac{55}{7}}
\end{equation}
On the other hand, from Sobolev embedding and \eqref{BA7}, we have:
\begin{equation}\label{38}
\int_{\kappa B/2<y<\kappa B}|\varepsilon_j|^{p+1}\Psi_B\leq \|\varepsilon\|^{p+1}_{H^1(\kappa B/2<y<\kappa B)}\leq b_c^{9}.
\end{equation}

Injecting \eqref{37} and \eqref{38} into \eqref{36}, we have:
$$\int_{\kappa B<y<b_c^{-20}}\varepsilon_j(t)^2\lesssim b_c^{\frac{55}{7}}+\lambda_j(t)^{2(1-\sigma_c)}\widetilde{E}(t)$$

Now it remains to estimate $\lambda_j(t)^{2(1-\sigma_c)}\widetilde{E}(t)$. To do this, we first use the Kato's localized identities for the energy to compute the derivative of $\widetilde{E}(\tau)$:
\begin{equation}\label{39}\begin{split}
\frac{d}{d{\tau}}\widetilde{E}(\tau)=&-\frac{1}{2}\int(u_{xx}+u|u|^{p-1})^2g_x-\int u_{xx}^2g_x\\
&+p\int u|u|^{p-2}u_x^2g_x+\frac{1}{2}\int u_x^2g_{xxx}\\
&-\frac{x_t(\tau)}{\lambda(\tau)}\int\Big{(}\frac{1}{2}|u_x(\tau)|^2-\frac{1}{p+1}|u(\tau)|^{p+1}\Big{)}\Psi_B'\bigg{(}\frac{x-x_j(\tau)}{\lambda_j(\tau)}\bigg{)}\,dx\\
&-\frac{\lambda_t(\tau)}{\lambda(\tau)}\int\Big{(}\frac{1}{2}|u_x(\tau)|^2-\frac{1}{p+1}|u(\tau)|^{p+1}\Big{)}\bigg{(}\frac{x-x(\tau)}{\lambda(\tau)}\bigg{)}\Psi_B'\bigg{(}\frac{x-x_j(\tau)}{\lambda_j(\tau)}\bigg{)}\,dx\\
\end{split}\end{equation}
where
$$g(x,\tau)=\Psi_B\bigg(\frac{x-x_j(\tau)}{\lambda_j(\tau)}\bigg).$$

We claim there exists a universal constant $C$ such that for all $\tau\in[0,t]$,
\begin{equation}\label{310}
\frac{d}{d{\tau}}\widetilde{E}(\tau)\leq \frac{Cb_c^{\frac{62}{7}}}{\lambda(\tau)^{3+2(1-\sigma_c)}}
\end{equation}
We denote by
\begin{align*}
&g_1(x,\tau)=\frac{1}{\sqrt{B}\lambda_j(\tau)}\theta\bigg(\frac{x-x_j(\tau)}{\sqrt{B}\lambda_j(\tau)}-\kappa B\bigg)\eta\bigg(b_c^{20}\frac{x-x_j(\tau)}{\lambda_j(\tau)}\bigg),\\
&g_2(x,\tau)=\frac{b_c^{20}}{\lambda_j(\tau)}\Theta\bigg(\frac{x-x_j(\tau)}{\sqrt{B}\lambda_j(\tau)}-\kappa B\bigg)\eta'\bigg(b_c^{20}\frac{x-x_j(\tau)}{\lambda_j(\tau)}\bigg),\\
&g_{3}(x,\tau)=\frac{1}{\big(\sqrt{B}\lambda_j(\tau)\big)^3}\theta''\bigg(\frac{x-x_j(\tau)}{\sqrt{B}\lambda_j(\tau)}-\kappa B\bigg)\eta\bigg(b_c^{20}\frac{x-x_j(\tau)}{\lambda_j(\tau)}\bigg).
\end{align*}
Then we have
$$g=g_1+g_2,\quad g_{xxx}-g_3=O\bigg(\frac{b_c^{20}}{\big(\sqrt{B}\lambda_j(\tau)\big)^3}\bigg).$$
So we can rewrite \eqref{39} as following:
$$\frac{d}{d{\tau}}\widetilde{E}(\tau)=I+II+III+IV+V,$$
where 
\begin{align*}
&I=-\frac{1}{2}\int(u_{xx}+u|u|^{p-1})^2g_2-\int u_{xx}^2g_2,\\
&II=p\int u|u|^{p-2}u_x^2g_2+\frac{1}{2}\int u_x^2(g_{xxx}-g_3),\\
&III=-x_t(\tau)\int\Big{(}\frac{1}{2}|u_x(\tau)|^2-\frac{1}{p+1}|u(\tau)|^{p+1}\Big{)}g_2(x,\tau)\,dx,\\
&IV=-\lambda_t(\tau)\int\Big{(}\frac{1}{2}|u_x(\tau)|^2-\frac{1}{p+1}|u(\tau)|^{p+1}\Big{)}\bigg{(}\frac{x-x(\tau)}{\lambda(\tau)}\bigg{)}g_2(x,\tau)\,dx,\\
&V=-\frac{1}{2}\int(u_{xx}+u|u|^{p-1})^2g_1-\int u_{xx}^2g_1\\
&\qquad+p\int u|u|^{p-2}u_x^2g_1+\frac{1}{2}\int u_x^2g_{3}\\
&\qquad-x_t(\tau)\int\Big{(}\frac{1}{2}|u_x(\tau)|^2-\frac{1}{p+1}|u(\tau)|^{p+1}\Big{)}g_1(x,\tau),dx\\
&\qquad-\lambda_t(\tau)\int\Big{(}\frac{1}{2}|u_x(\tau)|^2-\frac{1}{p+1}|u(\tau)|^{p+1}\Big{)}\bigg{(}\frac{x-x(\tau)}{\lambda(\tau)}\bigg{)}g_1(x,\tau)\,dx.\\
\end{align*}
It is easy to estimate $V$ by following the same argument as in \cite[Section 4]{L1}. From the properties of $\eta$ (i.e. exponential decay on the right), we know that on the support of $g_2$ and $g_{xxx}-g_3$, the following term is negligible:
$$\sum_{i=1}^k\frac{1}{\lambda_i(\tau)^{\frac{2}{p-1}}}Q_{b_i(\tau)}\bigg(\frac{x-x_i(\tau)}{\lambda_i(\tau)}\bigg).$$
Together with \eqref{BA6}, \eqref{BA7}, \eqref{210} and \eqref{ME2} we have:
\begin{align*}
&\big|II\big|\lesssim\frac{b_c^{20}}{\lambda_j(\tau)^{3+2(1-\sigma_c)}}\bigg(b_c^{100}+\int\big(|\varepsilon_j|^{p-1}(\varepsilon_j)_y^2+(\varepsilon_j)_y^2\big)\bigg)\leq \frac{Cb_c^{\frac{62}{7}}}{\lambda_j(\tau)^{3+2(1-\sigma_c)}},\\
&\big|III\big|\lesssim \frac{b_c^{20}x_{s_j}}{\lambda_j(\tau)^{4+2(1-\sigma_c)}}\bigg[b_c^{100}+\int\big(\varepsilon_j^2+|\varepsilon_j|^{p+1}\big)\bigg]\leq \frac{Cb_c^{\frac{62}{7}}}{\lambda_j(\tau)^{3+2(1-\sigma_c)}}.
\end{align*}
While for $IV$ we have $g_2\leq0$, $\lambda_t\leq 0$, and on the support of $g_2$, $(x-x_j(\tau))/\lambda_j(\tau)\geq0$. So we have:
$$-\lambda_t(\tau)\int\Big{(}\frac{1}{2}|u_x(\tau)|^2\Big{)}\bigg{(}\frac{x-x(\tau)}{\lambda(\tau)}\bigg{)}g_2(x,\tau)\,dx\leq 0.$$
Moreover, from \eqref{ME1} and the choice of $\eta$ we have:
\begin{align*}
&\quad\;\bigg|\lambda_t(\tau)\int\frac{1}{p+1}|u(\tau)|^{p+1}\bigg{(}\frac{x-x(\tau)}{\lambda(\tau)}\bigg{)}g_2(x,\tau)\,dx\bigg|\\
&\lesssim\frac{b_c}{\lambda_j(\tau)^{3+2(1-\sigma_c)}}\int|\varepsilon(\tau,y')|^{p+1}(b_c^{20}y')\eta'(b_c^{20}y')\,dy'\\
&\lesssim \frac{b_c}{\lambda_j(\tau)^{3+2(1-\sigma_c)}}\int_{y'>\kappa B}|\varepsilon(\tau,y')|^{p+1}\eta(b_c^{20}y')^{\frac{99}{100}}\,dy'\\
&\leq \frac{Cb_c^{\frac{62}{7}}}{\lambda_j(\tau)^{3+2(1-\sigma_c)}},
\end{align*}
where the last inequality follows from the same argument which is used to estimate \eqref{37}.
Thus we obtain:
$$IV\leq \frac{Cb_c^{\frac{62}{7}}}{\lambda_j(\tau)^{3+2(1-\sigma_c)}}.$$

Finally, we deal with $I$. First of all, we have
\begin{equation}\label{311}
I\lesssim \frac{b_c^{20}}{\lambda_j(\tau)}\bigg(\int (u_{xx}^2+|u|^{2p})\bigg)\lesssim \frac{b_c^{20}}{\lambda_j(\tau)}\int u_{xx}^2+\frac{b_c^{10}}{\lambda_j(\tau)^{3+2(1-\sigma_c)}},
\end{equation}
where we use the fact that
\begin{align*}
\int|u|^{2p}&\lesssim \frac{1}{\lambda_j(\tau)^{2+2(1-\sigma_c)}}\int\bigg(|\varepsilon_j(\tau)|^{2p}+\sum_{i=1}^k\bigg(\frac{\lambda_j(\tau)}{\lambda_i(\tau)}\bigg)^{\frac{3p+1}{p-1}}|Q_{b_i(\tau)}|^{2p}\bigg)\\
&\lesssim \frac{10^k}{\lambda_{j}(\tau)^{2+2(1-\sigma_c)}}\leq \frac{b_c^{-\nu/4}}{\lambda_{j}(\tau)^{2+2(1-\sigma_c)}}.
\end{align*}
While for $\int u_{xx}^2$, we can use pseudo-conservation law to estimate. Precisely, we have the following estimate for all $\tau_0\in[0,\tau]$:
\begin{equation}\label{312}
\frac{d}{d\tau_0}E_2(\tau_0)\lesssim \int u_x^3|u|^{2p-3}(\tau_0)+\int u_x^5|u|^{p-4}(\tau_0),
\end{equation}
where
$$E_2(\tau_0)=\int u_{xx}^2(\tau_0)-\frac{5p}{3}\int u_x^2|u|^{p-1}(\tau_0).$$
It is easy to prove \eqref{312} by integrating by parts.

Now we assume the following a $priori$ estimate for all $\tau_0\in[0,\tau]$:
\begin{equation}\label{313}
\int u_{xx}(\tau_0)^2\leq \frac{b_c^{-8}}{\lambda_j(\tau_0)^{2+2(1-\sigma_c)}}.
\end{equation}
Then Sobolev embedding implies that:
$$\|u_x(\tau_0)\|_{L^{\infty}}\leq \|u_x(\tau_0)\|_{L^2}^{\frac{1}{2}}\|u_{xx}(\tau_0)\|_{L^2}^{\frac{1}{2}}\lesssim \frac{b_c^{-2-\nu}}{\lambda_j(\tau_0)^{\frac{1}{2}+2(1-\sigma_c)}},$$
where we use the fact that
$$\int u_x^2(\tau_0)\lesssim \frac{b_c^{-\nu/4}}{\lambda_j(\tau_0)^{2(1-\sigma_c)}}.$$

From \eqref{312} and \eqref{313} we obtain
\begin{align*}
\frac{d}{d\tau_0}E_2(\tau_0)&\lesssim \|u_x\|_{L^{\infty}}\|u\|_{L^{\infty}}^{2p-3}\int u_x^2+\|u_x\|^3_{L^{\infty}}\|u\|_{L^{\infty}}^{p-4}\int u_x^2\\
&\lesssim \frac{b_c^{-6-10\nu}}{\lambda_j(\tau_0)^{5+2(1-\sigma_c)}}.
\end{align*}
Note that for $\beta>3$,
$$\int_0^{\tau_0}\frac{1}{\lambda_j(\tau')^{\beta}}\,d\tau'\leq \frac{2}{b_c(\beta-3)\lambda_j(\tau_0)^{\beta-3}}.$$
We have:
$$E_2(\tau_0)\lesssim E_2(0)+\frac{b_c^{-7-10\nu}}{\lambda_j(\tau_0)^{2+2(1-\sigma_c)}}.$$
The conditions on the initial data lead to:
\begin{gather*}
E_2(0)\lesssim \frac{b_c^{-10\nu}}{\lambda_j(0)^{2+2(1-\sigma_c)}}\leq \frac{b_c^{-10\nu}}{\lambda_j(\tau_0)^{2+2(1-\sigma_c)}}.
\end{gather*}
Together with
$$\int u_x^2|u|^{p-1}(\tau_0)\lesssim \frac{b_c^{-10\nu}}{\lambda_j(\tau_0)^{2+2(1-\sigma_c)}},$$
we have for all $\tau_0\in[0,\tau]$,
$$\int u_{xx}(\tau_0)^2\leq \frac{b_c^{-7-10\nu}}{\lambda_j(\tau_0)^{2+2(1-\sigma_c)}}.$$
From a standard bootstrap argument (if $\nu$ is small enough), we have shown that
$$\int u_{xx}(\tau)^2\leq \frac{b_c^{-7-10\nu}}{\lambda_j(\tau)^{2+2(1-\sigma_c)}}.$$

Injecting this into \eqref{311} we get
$$I\leq \frac{b_c^{10}}{\lambda_j(\tau)^{3+2(1-\sigma_c)}},$$
which concludes the proof of \eqref{310}, hence the proof of \eqref{34} and \eqref{32}. 
\end{proof}

The main goal of this subsection is to control the $L^{\infty}$ norm of $\varepsilon_j$ on the right, i.e.
\begin{corollary}
The following $L^{\infty}$ control hold for all $t\in[0,T^*)$:	
	\begin{equation}\label{314}
	\|\varepsilon_j(t)\|_{L^{\infty}(b_c^{-10}>y>\kappa B)}\lesssim b_c^{2}.
	\end{equation}
\end{corollary}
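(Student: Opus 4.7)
The plan is to prove this pointwise bound via a one-dimensional Sobolev / Gagliardo--Nirenberg style interpolation, combining the new localized gradient control \eqref{32} with the $L^{p_0}$ bootstrap bound \eqref{BA5}. Concretely, for $y_0 \in (\kappa B, b_c^{-10})$ and an auxiliary point $z > y_0$, I would start from
$$|\varepsilon_j(y_0)|^{9/4} \leq |\varepsilon_j(z)|^{9/4} + \frac{9}{4}\int_{y_0}^{z} |\varepsilon_j|^{5/4}\,|(\varepsilon_j)_y|\,dy',$$
which is the fundamental theorem of calculus applied to $y \mapsto |\varepsilon_j(y)|^{9/4}$. The exponent $9/4 = p_0/2 + 1$ is tuned so that the integrand splits via Cauchy--Schwarz into $L^{5/2}=L^{p_0}$ of $\varepsilon_j$ and $L^2$ of $(\varepsilon_j)_y$, the two quantities I have explicit control of.

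The integration range must lie inside $(\kappa B, b_c^{-20})$ so that the gradient factor is estimated by the sharp bound \eqref{32}, rather than by the much weaker estimate coming from $B\mathcal{N}_j$. Accordingly I take $z$ in, say, $(b_c^{-15}, b_c^{-20})$, whose length is comparable to $b_c^{-20}$; a pigeonhole argument based on $\int|\varepsilon_j|^{5/2} \leq b_c^{23/20}$ from \eqref{BA5} produces such a $z$ with $|\varepsilon_j(z)|$ bounded by a large positive power of $b_c$, so that the boundary term $|\varepsilon_j(z)|^{9/4}$ is negligible.

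For the integral term, Cauchy--Schwarz together with \eqref{BA5} and \eqref{32} yields
$$\int_{y_0}^{z} |\varepsilon_j|^{5/4}\,|(\varepsilon_j)_y|\,dy' \leq \|\varepsilon_j\|_{L^{5/2}}^{5/4}\left(\int_{\kappa B < y' < b_c^{-20}} (\varepsilon_j)_y^2 \,dy'\right)^{1/2} \lesssim b_c^{23/40}\,b_c^{55/14} = b_c^{1261/280},$$
so $|\varepsilon_j(y_0)|^{9/4} \lesssim b_c^{1261/280}$ and hence $|\varepsilon_j(y_0)| \lesssim b_c^{1261/630}$. Since $1261/630 > 2$ and $b_c < 1$, this is stronger than the claimed $b_c^2$.

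The main subtlety, and the only real design choice, is the direction of integration. A symmetric cutoff $\chi$ equal to $1$ on $[\kappa B, b_c^{-10}]$ would necessarily pick up the region $\kappa B/2 < y < \kappa B$ where one only has $\int (\varepsilon_j)_y^2 \lesssim B\mathcal{N}_j \lesssim b_c^{59/20 + 6\nu}$, a bound far weaker than $b_c^{55/7}$, and this would degrade the final exponent to well below $2$. Integrating strictly to the right into the window where \eqref{32} is available, and fixing the right endpoint $z$ by pigeonhole, cleanly avoids this issue; the rest is bookkeeping with the exponents exactly as sketched.
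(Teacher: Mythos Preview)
Your argument is correct and is essentially the paper's own proof: both implement the same localized Gagliardo--Nirenberg interpolation with exponent $p_0/2+1=9/4$, using \eqref{BA5} for the $L^{p_0}$ factor and \eqref{32} for the gradient factor on $(\kappa B,b_c^{-20})$, yielding the identical final exponent $1261/630>2$. The only cosmetic difference is how the right boundary term is disposed of: you integrate to a finite point $z\in(b_c^{-15},b_c^{-20})$ selected by pigeonhole from $\int|\varepsilon_j|^{p_0}\le b_c^{23/20}$, whereas the paper multiplies by a cutoff $\eta_0(b_c^{10}\cdot)$ so that the auxiliary function vanishes at $+\infty$ and then absorbs the cutoff-derivative contribution $b_c^{5}\|\varepsilon_j\|_{L^\infty}$ via the global Gagliardo--Nirenberg bound; both devices are equivalent here.
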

\begin{proof}
	Let $\eta_0$ be a smooth function with $\eta_0(y)=1$ for $y<1$, $\eta_0(y)=0$ for $y>2$. Let $f(y)=\varepsilon_j(y)\eta_0(b_c^{10}y)$.
	Applying the localized Gagliardo-Nirenberg inequality to $f$, we have
	\begin{align*}
	\|f\|_{L^{\infty}(y>\kappa B)}&\leq \|f\|_{L^{p_0}}^{\frac{p_0}{p_0+2}}\|f_y\|_{L^2(y>\kappa B)}^{\frac{2}{p_0+2}}\\
	&\lesssim  \|\varepsilon_j\|_{L^{p_0}}^{\frac{p_0}{p_0+2}}\bigg(\big\|(\varepsilon_j)_y\big\|_{L^2(b_c^{20}>y>\kappa B)}^{\frac{2}{p_0+2}}+\Big(b_c^5\|\varepsilon_j\|_{L^{\infty}}\Big)^{\frac{2}{p_0+2}}\bigg)\\
	&\lesssim b_c^2.
	\end{align*}
\end{proof}

\subsection{Monotonicity formula}
{ In this subsection, we will derive a monotonicity formula for the localized Sobolev norm of $\varepsilon_j$, which will imply the bootstrap bound \eqref{BB7} immediately and is important in the derivation of the asymptotic dynamics of the flow. This formula here is almost the same to the one in \cite{L1}. Such monotonicity tools were introduced originally in \cite{MM3} and \cite{MMR1} for critical gKdV.}

Recall from (2.32), the definition of $\varphi$. We let $\psi$, $\eta_0$ be 2 other smooth functions such that:
\begin{align}
&\psi(y)=\begin{cases}
e^{y} & \text{ for } y<-1,\\
1 & \text{ for } y>-\kappa,
\end{cases}\quad \psi'\geq0,\\
&\eta_0(y)=\begin{cases}
1 & \text{ for } y<1,\\
0 & \text{ for } y>2,
\end{cases}\quad \eta_0'\leq0.
\end{align}
Here, we observe that $\psi(-\kappa)=\varphi(-\kappa)+\kappa$, and $\psi(y)=\varphi(y)$ for all $y<-1$, so we may assume in addition:
\begin{equation}\label{APOW}
\varphi(y)\leq \psi(y)\leq (1+3\kappa)\varphi(y), \text{ for all }y\leq-\kappa.
\end{equation}
\begin{remark}
	It is easy to check that for every $\frac{1}{2}>\kappa>0$, such $\psi$ and $\varphi$ exist.
\end{remark}
Now, recall $B=b_c^{-\frac{1}{20}}$. We let
$$\psi_B(y)=\psi(\frac{y}{B})\eta_0(b_c^{10}y),\quad \zeta_B(y)=\varphi_B\eta_0(\frac{y}{B^2}).$$
and then define the following $Lyapunov$ functional for $\varepsilon_j$:
\begin{equation}
\mathcal{F}_j=\int\bigg{[}(\varepsilon_j)_y^2\psi_B+(\varepsilon_j)^2\zeta_B-\frac{2}{p+1}\big{(}|\varepsilon_j+Q_{b_j}|^{p+1}-Q_{b_j}^{p+1}-(p+1)\varepsilon_j Q_{b_j}^p\big{)}\psi_B\bigg{]}.
\end{equation}
Our main goal here is the following monotonicity formula for $\mathcal{F}_j$:
\begin{proposition}[The second monotonicity formula]\label{MF}
	There exists a universal constant $\mu>0$ and $0<\kappa<\frac{1}{2}$, such that for all $j=1,\ldots,k$, $ s_j\in[0,s^*_j)$, the following holds:
	\begin{enumerate}
		\item Lyapunov control:
		\begin{equation}\label{MF1}
		\frac{d}{ds_j}\mathcal{F}_j+\mu\int\big{(}(\varepsilon_j)_y^2+(\varepsilon_j)^2\big{)}\varphi'_B\lesssim b_c^{\frac{7}{2}};
		\end{equation}
		\item Coercivity of $\mathcal{F}$: there exists a universal constant $\kappa>0$ such that
		\begin{equation}\label{MF2}
		\mathcal{N}_j-b_c^{\frac{7}{2}}\lesssim\mathcal{F}_j\lesssim \mathcal{N}_j+b_c^{\frac{7}{2}}.
		\end{equation}
	\end{enumerate}
\end{proposition}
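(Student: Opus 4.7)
The plan is to follow the Lyapunov computation developed in \cite{L1}, \cite{MM3} and \cite{MMR1} for the single-bubble setting, and then to argue that the extra terms arising in the multi-bubble geometrical decomposition are negligible thanks to the separation \eqref{BA4} between blow-up points and the localization of the weight $\psi_B$ around $y=0$. More precisely, I would start by differentiating $\mathcal{F}_j$ in $s_j$ and substituting the equation \eqref{ME} for $\partial_{s_j}\varepsilon_j$. The resulting expression splits schematically as
\begin{equation*}
\frac{d}{ds_j}\mathcal{F}_j = \mathrm{QF}_j + \mathrm{Mod}_j + \mathrm{Profile}_j + \mathrm{NL}_j + \mathrm{Int}_j,
\end{equation*}
where $\mathrm{QF}_j$ is the quadratic form coming from the linear part $(L\varepsilon_j)_y-b_j\Lambda\varepsilon_j$, $\mathrm{Mod}_j$ collects the modulation terms in $\bigl(\frac{(\lambda_j)_{s_j}}{\lambda_j}+b_j\bigr)$, $\bigl(\frac{(x_j)_{s_j}}{\lambda_j}-1\bigr)$, $\frac{db_j}{ds_j}$, $\mathrm{Profile}_j$ collects the $\Phi_{b_j}$ and $P_{b_j}$ contributions, $\mathrm{NL}_j$ gathers the contributions from $R_{b_j}(\varepsilon_j)$ and $R_{NL}(\varepsilon_j)$, and $\mathrm{Int}_j$ is the new interaction contribution coming from the last three lines of \eqref{ME}.

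For $\mathrm{QF}_j$ I would integrate by parts with the weight $(\psi_B,\zeta_B)$; this is exactly the computation of \cite[Proposition 5.2]{L1}, producing a positive term $\mu\int((\varepsilon_j)_y^2+\varepsilon_j^2)\varphi_B'$ after exploiting $\psi_B'\sim\zeta_B'\sim\varphi_B'$ on the effective support, together with virial-type commutator terms whose signs are controlled via the choice of $B=b_c^{-1/20}$ and the smallness of $\kappa$. The modulation term $\mathrm{Mod}_j$ is handled by the orthogonality conditions \eqref{OC}, together with the estimates \eqref{ME1}--\eqref{ME3}, to gain an extra factor $b_c^{1/2}\mathcal{N}_j^{1/2}+b_c^{3/2}$; the $\mathrm{Profile}_j$ part is bounded by $b_c^{7/2}$ via \eqref{APP}, the smallness of $|b_j-b_c|$ from \eqref{BA3}, and the explicit bound \eqref{NV}. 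For $\mathrm{NL}_j$ the local $L^\infty$ control \eqref{314} (together with \eqref{BA5}, \eqref{BA6} and \eqref{210}) gives $\|\varepsilon_j\|_{L^\infty(\kappa B<y<b_c^{-10})}\lesssim b_c^2$, which reduces the cubic and higher terms to quadratic forms absorbed in $\mu\int((\varepsilon_j)_y^2+\varepsilon_j^2)\varphi_B'$ (up to an $O(b_c^{7/2})$ remainder).

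The genuinely new part is $\mathrm{Int}_j$. I would estimate it by the same argument as in the proof of the modulation estimates: if $y$ lies in the support of $\psi_B$ (so $|y|\lesssim b_c^{-10}$) while $Q_{b_i}(\lambda_j y/\lambda_i+(x_j-x_i)/\lambda_i)$, $P_{b_i}(\cdots)$ or $\Phi_{b_i}(\cdots)$ is not negligible, then by \eqref{BA2} and \eqref{BA4} the argument of these profiles is of size $\gtrsim b_c^{-60}$, which combined with the exponential decay estimates of Lemma \ref{SSP} and the assumption $10^k\leq b_c^{-\nu/4}$ (i.e. $k\leq c|\log(p-5)|$) gives a bound of order $e^{-cb_c^{-60}}\ll b_c^{100}$. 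Hence $|\mathrm{Int}_j|\lesssim b_c^{100}$ and is harmless. Putting the five pieces together yields \eqref{MF1}.

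For \eqref{MF2}, I would expand the nonlinear term around $Q_{b_j}$ to write
\begin{equation*}
\mathcal{F}_j = \int\bigl[(\varepsilon_j)_y^2\psi_B+\varepsilon_j^2\zeta_B - pQ_{b_j}^{p-1}\varepsilon_j^2\psi_B\bigr] + O\bigl(\|\varepsilon_j\|_{L^\infty(\mathrm{loc})}\mathcal{N}_j+b_c^{7/2}\bigr),
\end{equation*}
and then invoke the localized coercivity of the linearized operator $L$ around $\mathcal{Q}_p$ under the three orthogonality conditions \eqref{OC}, exactly as in \cite[Lemma 3.4 and Proposition 4.2]{MMR1}, combined with \eqref{APOW} to transfer between $\psi_B$ and $\varphi_B$. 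The main obstacle, as usual in this line of work, is the bookkeeping of the quadratic-form cancellations in $\mathrm{QF}_j$: one must choose $\kappa$ small enough and $B$ large enough that the negative boundary commutators are strictly dominated by $\mu\int((\varepsilon_j)_y^2+\varepsilon_j^2)\varphi_B'$; this is exactly the point where the single-bubble argument of \cite{L1} is most delicate, and in the multi-bubble setting it is essentially unchanged because the interaction term $\mathrm{Int}_j$ is exponentially small in $b_c^{-1}$.
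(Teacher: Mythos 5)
Your proposal follows essentially the same route as the paper, which simply reduces the statement to the single-bubble Lyapunov computation of \cite[Section 5]{L1} and observes that the interaction terms coming from the other bubbles are negligible because the weights $(\psi_B,\zeta_B)$ localize $\varepsilon_j$ near the support of $Q_{b_j}$ while the other bubbles sit at distance $\gtrsim b_c^{-70}$. The only detail the paper explicitly flags that you do not address is the extra cut-off $\eta_0(b_c^{10}y)$ on the right of $\psi_B$ (absent in \cite{L1}), whose derivative generates additional terms in \eqref{MF1} that are bounded by $-b_c^{10}\int(\varepsilon_j)_y^2(s_j,y')\eta_0'(b_c^{10}y')\,dy'$ and hence by $b_c^{10}$, a harmless remainder.
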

\begin{remark}
The bootstrap bound \eqref{BB7} follows immediately from \eqref{MF1}, \eqref{MF2} and Grownwell's inequality.
\end{remark}
\begin{proof}
	The proof of Proposition \ref{MF} is exactly the same to the one in \cite[Section 5]{L1}. The idea is that the error term $\varepsilon_j$ has been ``localized" to the support of $Q_{b_j}$, due to our choice of the weight functions. Then the estimate is exactly the same to the single blow-up point case.
	
	The only difference here is that we add a cut-off on the right of $\psi_B$. This will lead to some additional terms on the right hand side of \eqref{MF1}. But if we check the proof of \cite[Proposition 5.1]{L1}, we will see these additional terms can always be bounded by
	$$-b_c^{10}\int(\varepsilon_j)^2_y(s_j,y')\eta_0'(b_c^{10}y')\,dy',$$
	and hence bounded by $b_c^{10}$. 
\end{proof}

\section{Existence of blow-up solutions with exactly $k$ blow-up points}
This section is devoted to prove Proposition \ref{prop} and Proposition \ref{prop1}. Hence for all $u_0\in\mathcal{O}_{k,p}$, the corresponding solution has exactly $k$ blow-up points.
\subsection{Closing the bootstrap}
In this subsection we will finish the bootstrap argument and finally prove Proposition \ref{prop}.
	
\begin{proof}[Proof of Proposition \ref{prop}]
The bounds \eqref{BB1}--\eqref{BB4} are consequences of the modulation estimates \eqref{ME1}--\eqref{ME3}. Indeed, \eqref{BB1} follows from the fact that $0<\lambda_j(0)<2$ and that $\lambda_j(t)$ is decreasing. \eqref{BB2} is just the definition of $\mathcal{O}_{k,p}$. For \eqref{BB3}, we have
\begin{align*}
|x_i(t)-x_j(t)|&\geq |x_i(0)-x_j(0)|-\int_0^t|(x_i)_t|+|(x_j)_t|\\
&\geq b_c^{-80}-2\int_0^t\bigg(\frac{1}{\lambda_i^2}+\frac{1}{\lambda_j^2}\bigg)\\
&\geq b_c^{-80}+\frac{1}{b_c}\int_0^t(\lambda_i+\lambda_j)_t\geq b_c^{-75}.
\end{align*}
	
While for \eqref{BB4}, suppose for some $s_{j,0}\in(0,s^*_j)$, we have $b(s_{j,0})> b_c+ b_c^{\frac{3}{2}+2\nu}$. By the choice of the initial data, i.e. \eqref{225}, we can find some $s_{j,1}\in[0,s_{j,0})$ such that $b(s_{j,1})=b_c+b_c^{\frac{3}{2}+\frac{5}{2}\nu}$ and $b(s_j)\geq b_c+b_c^{\frac{3}{2}+\frac{5}{2}\nu}$ for all $s_j\in[s_{j,1},s_{j,0})$. Then $b_{s_j}(s_{j,1})\geq 0$. From \eqref{BA7} and \eqref{ME3}, we have:
\begin{equation}
b_{s_j}(s_{j,1})\leq-c_p\big(b(s_{j,1})-b_c\big)b_c+b_c^{\frac{5}{2}+3\nu}= -c_p b_c^{\frac{5}{2}+\frac{5\nu}{2}}+b_c^{\frac{5}{2}+3\nu}<0,
\end{equation}
if $b_c$ is small enough such that $b_c^{\nu}\ll 1$. We get a contradiction. The opposite bound is similar.
		
The proof of \eqref{BB5}--\eqref{BB7} is parallel to the one for the single blow-up point case in \cite{L1}, using the similar monotonicity tools developed in the previous section.

{Indeed, we can see \eqref{BB6} is a direct consequence of \eqref{31}, and \eqref{BB7} follows from \eqref{MF1}, \eqref{MF2} and Grownwell's inequality. Finally, similar as \cite[Section 6.1]{L1}, we can prove \eqref{BB5} by a refined Strichartz estimates introduced in \cite{Fos}.}
\end{proof}	

\subsection{Proof of Proposition \ref{prop1}} First we prove the finite time blow-up and self-similar result, i.e. \eqref{112}.
From Proposition \ref{prop} and \eqref{ME1}, we know for all $t\in[0,T)$, $j\in\{1,\ldots,k\}$, 
\begin{equation}\label{41}
(1-\nu)b_c\leq -(\lambda_j)_t\lambda_j^2\leq(1+\nu)b_c.
\end{equation}
Integrating \eqref{41} from $0$ to $t$ we know that for all $j\in\{1,\ldots,k\}$,
\begin{equation}
\label{411}
\forall t\in[0,T),\quad(1-\nu)b_ct\leq\frac{1}{3}\lambda_j^3(0)\text{ and hence }T\leq\frac{\lambda_j^3(0)}{3b_c(1-\nu)}<+\infty.
\end{equation}
So the solution blows up in finite time. Then from local Cauchy theory, we know $\lim_{t\rightarrow T}\|u_x(t)\|_{L^2}=+\infty$. But we know from the geometrical decomposition:
$$\|u_x(t)\|_{L^2}\sim \sum_{i=1}^k\frac{1}{\lambda_i(t)^{1-\sigma_c}}.$$
Combining with \eqref{BB2}, we have for all $j\in\{1,\ldots,k\}$,
$$\lim_{t\rightarrow T}\lambda_j(t)=0.$$
We then integrate \eqref{41} from $t$ to $T$  to obtain:
$$\forall t\in[0,T),\quad (1-\nu)b_c(T-t)\leq\frac{\lambda_j^3(t)}{3}\leq(1+\nu)b_c(T-t),$$
which implies \eqref{112}.

{ While for \eqref{1131}, from \eqref{ME2}, \eqref{21} and \eqref{411}, we have for all $j=1,\ldots,k$,
$$|x_j(0)-x_j(T)|\lesssim \int_0^{+\infty}|(x_j)_{s_j}|\,ds_j\lesssim \int_0^{T}\frac{dt}{\lambda_j(t)}\lesssim \int_0^T\frac{dt}{\sqrt[3]{b_c(T-t)}}\lesssim \frac{T^{2/3}}{b^{1/3}_c}\lesssim \frac{1}{b_c}.$$}
The proof of \eqref{1132}--\eqref{116} is exactly the same as the one in \cite[Section 6.2]{L1}. 

Finally, for \eqref{106}, from \eqref{continuity} and Lemma \ref{212}, we only need to show that the blow-up time is continuous with respect to the initial data in $\mathcal{O}_{k,p}.$  More precisely, we have:
\begin{lemma}
	Consider $\{u_{0,n}\}_{n=1}^{+\infty}\subset \mathcal{O}_{k,p}$, $u_0\in\mathcal{O}_{k,p}$, such that $u_{0,n}$ converges to $u_0$ in $H^1$ as $n\rightarrow+\infty$. Let $u_n(t)$, $u(t)$ be the corresponding solutions to \eqref{CP}, and $T_n$, $T$ be the corresponding blow-up times, then we have $$\lim_{n\rightarrow+\infty}T_n=T.$$
\end{lemma}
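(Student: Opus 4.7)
The plan is to read off the blow-up time from the scaling parameters using the self-similar rate \eqref{112}, and then transfer this information to the sequence $u_n$ via continuous dependence of both the $H^1$ flow and the geometrical parameters.

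Fix an arbitrary small $\delta>0$. The first step is to choose $t_0\in[0,T)$ with $T-t_0$ small (a precise smallness determined by $\delta$ and the universal constant $\nu$). Since $u(t)$ is a well-defined $H^1$ solution on the closed interval $[0,t_0]$, the local well-posedness theory for gKdV in $H^1$ from \cite{KPV,Stru} implies that for $n$ large enough, $u_n(t)$ is also well-defined on $[0,t_0]$ and $u_n\to u$ in $C([0,t_0];H^1)$. In particular $T_n>t_0$ for $n$ large. Applying the continuity statement \eqref{continuity} on a slightly larger interval (still contained in $[0,\min(T,T_n))$), I obtain $\lambda_{j,n}(t_0)\to \lambda_j(t_0)$ for each $j=1,\dots,k$.

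Now I apply \eqref{112} to both $u$ and $u_n$ at the fixed time $t_0$. This yields
\begin{equation*}
3(1-\nu)b_c(T-t_0)\leq \lambda_j^3(t_0)\leq 3(1+\nu)b_c(T-t_0),
\end{equation*}
\begin{equation*}
3(1-\nu)b_c(T_n-t_0)\leq \lambda_{j,n}^3(t_0)\leq 3(1+\nu)b_c(T_n-t_0).
\end{equation*}
Since $\lambda_{j,n}^3(t_0)\to \lambda_j^3(t_0)$ as $n\to\infty$, combining the upper bound on $\lambda_j^3(t_0)$ with the lower bound on $\lambda_{j,n}^3(t_0)$ (and vice versa) produces
\begin{equation*}
\limsup_{n\to\infty}|T_n-T|\leq \frac{2\nu}{1-\nu}(T-t_0).
\end{equation*}
Since the right-hand side can be made less than $\delta$ by our initial choice of $t_0$, and $\delta>0$ was arbitrary, I conclude $T_n\to T$.

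The main subtlety is that the self-similar rate \eqref{112} is a two-sided bound rather than an exact identity, so at a fixed $t_0$ the value of $T_n-t_0$ is only determined up to the multiplicative factor $(1+\nu)/(1-\nu)$ by $\lambda_{j,n}^3(t_0)$. This is overcome by first sending $n\to\infty$ (so $\lambda_{j,n}^3(t_0)\to\lambda_j^3(t_0)$) and then taking $t_0\to T$: the \emph{absolute} error bound $\tfrac{2\nu}{1-\nu}(T-t_0)$ then goes to zero even though $\nu$ itself is fixed. The other ingredients, namely the continuous dependence of the $H^1$ flow and the continuity \eqref{continuity} of the geometrical parameters, are classical or already established.
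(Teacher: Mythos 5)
Your proof is correct and follows essentially the same route as the paper: both arguments use $H^1$ continuous dependence to guarantee $u_n$ exists up to a time $t_0$ near $T$, the continuity \eqref{continuity} of the scaling parameters, and the self-similar law \eqref{112} (equivalently, integrating \eqref{41} up to the blow-up time) to read off $T_n-t_0$ from $\lambda_{j,n}^3(t_0)$ up to the factor $(1+\nu)/(1-\nu)$, which becomes harmless as $t_0\to T$. The only cosmetic difference is that the paper obtains $\liminf_n T_n\geq T$ directly from perturbation theory and uses \eqref{112} only for the upper bound, whereas you extract both bounds symmetrically from \eqref{112}; the ingredients are identical.
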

\begin{proof}First of all, from a classical argument of continuity with respect to the initial data (i.e. the perturbation theory%
\footnote{See for example Lemma 2.4 in \cite{MMR2}.}%
), we obtain:
\begin{equation}
\label{510}
\liminf_{n\rightarrow+\infty}T_n\geq T.
\end{equation}

On the other hand, for all $\delta>0$, there exists $n(\delta)>0$, such that if $n>n(\delta)$, $u_n(T-\delta)$ exists. Integrating \eqref{41} from $T-\delta$ to $T_n$, we have
$$T_n<T-\delta+\frac{\lambda^3_{j,n}(T-\delta)}{3(1-\nu)b_c},$$
where $\lambda_{j,n}(t)$ is the $j$-th scaling parameter of the solution $u_n(t)$. Let $n\rightarrow+\infty$, we will obtain:
$$\limsup_{n\rightarrow +\infty}T_n\leq T-\delta+\frac{\lambda^3_j(T-\delta)}{3(1-\nu)b_c}\leq T+2\delta.$$
Then let $\delta\rightarrow0$, we have $\limsup_{n\rightarrow+\infty}T_n\leq T$, which concludes the proof of the lemma.
\end{proof}

Therefore, we finish the proof of \eqref{106} and hence the proof of Proposition \ref{prop1}.

\section{Proof of Theorem \ref{MT} by Brouwer's theorem}
In this section, we will prove Theorem \ref{MT}. Actually, Proposition \ref{prop1} has already given the existence of solutions with exactly $k$ blow-up points. And here we will use another topological argument to show that the blow-up points can be chosen arbitrarily.

Given any $k$ points $\{x_1,\ldots,x_k\}$, we want to find a solution whose blow-up set is exactly $\{x_1,\ldots,x_k\}$. 

{\noindent{\bf Step 1.} First, we show that if
\begin{equation}
\label{LD}
|x_i-x_j|\geq b_c^{-120},\text{ for all }i\not=j,
\end{equation}
then there exists a solution $u(t)$ satisfying \eqref{10001} and \eqref{10002}, whose blow-up set is exactly $\{x_1,\ldots,x_k)$.}

For $j=1,\ldots,k$, we let $I_j=[x_j-b_c^{-3},x_j+b_c^{-3}].$ Then for all $x_{i,0}\in I_i$, we have
$$|x_{i_1,0}-x_{i_2,0}|\geq b_c^{-100},\text{ for all }i_1\not=i_2.$$

{Next, we fix suitable $\lambda_{1,0}$, $b_{1,0},\ldots,b_{k,0}>0$ and $\tilde{u}_0\in C^{\infty}_0$,} such that for all 
$$(x_{1,0},\ldots,x_{k,0})\in I_1\times\cdots\times I_k,$$
there exist $(\lambda_{2,0},\ldots,\lambda_{k,0})$ such that conditions \eqref{27}--\eqref{29} is satisfied for%
\footnote{We mention here that this is possible due to the assumption \eqref{LD}.}
$$v_0(x)=F(\vec{\lambda_0},\vec{b}_0,\vec{x}_0,\tilde{u}_0)=\sum_{i=1}^k\frac{1}{\lambda_{i,0}^{\frac{2}{p-1}}}Q_{b_{i,0}}\bigg(\frac{x-x_{i,0}}{\lambda_{i,0}}\bigg){}+\tilde{u}_0(x).$$

Then, for all $(x_{1,0},\ldots,x_{k,0})\in I_1\times\cdots\times I_k$, we can consider the solution $u(t)$ with initial data
\begin{align*}
u_0=F(\lambda_{1,0},\lambda_{2,0}^*,\ldots,\lambda_{k,0}^*,\vec{b}_0,\vec{x}_0,\tilde{u}_0),
\end{align*}
where%
\footnote{Here, $F_j$, $j=2,\ldots,k$ are the continuous functions defined in Lemma \ref{212}.}
\begin{align*}
&\lambda_{k,0}^*=F_k(\vec{b}_0,\vec{x}_0,\tilde{u}_0,\lambda_{1,0})>0,\\
&\lambda_{i,0}^*=F_i(\vec{b}_0,\vec{x}_0,\tilde{u}_0,\lambda_{1,0},\lambda_{i+1,0}^*,\ldots,\lambda_{k,0}^*)>0,\text{ for }i=2,\ldots,k-1.
\end{align*}
{Obviously, we have $u_0\in C^{\infty}_0.$}

{Then, from Proposition \ref{prop}, we know that the geometrical decomposition \eqref{GD} and the estimates \eqref{BB1}--\eqref{BB7} hold for all $t\in[0,T)$, where $T$ is the maximal life span of $u(t)$. From Proposition \eqref{prop1}, we know that $u(t)$ blows up in finite time (i.e. $T<+\infty$), and has exactly $k$ blow-up points, i.e. $\{x_1(T),\ldots, x_k(T)\}$. It is easy to check that $u(t)$ satisfies \eqref{10001}, \eqref{10002} and \eqref{10003}, and the blow-up set of $u(t)$ is $\{x_1(T),\ldots,x_k(T)\}$ (the limit of the translation parameters $x_j(t)$ as $t\rightarrow T$). Next, we  define a map $M$ from $D=I_1\times\cdots\times I_k$ to $\mathbb{R}^k$ as following:
$$M=M_2\circ M_1,$$
where
\begin{gather*}
M_1:\; D\rightarrow \mathcal{O}_{k,p},\\
(x_{1,0},\ldots,x_{k,0})\mapsto u_0=F(\lambda_{1,0},\lambda_{2,0}^*,\ldots,\lambda_{k,0}^*,\vec{b}_0,\vec{x}_0,\tilde{u}_0),
\end{gather*}
and 
\begin{gather*}
M_2:\; \mathcal{O}_{k,p}\rightarrow \mathbb{R}^k,\\
u_0\mapsto \{x_1(T),\ldots,x_k(T)\}\text{ (the blow-up set of $u(t)$.)}
\end{gather*}
From \eqref{106} and the fact that $F_j$ is continuous for all $j=2,\ldots,k$, it is easy to see that the maps $M_1$ and $M_2$ are continuous. Hence $M$ is continuous. }

Now we claim there exists a $(x_{1,0},\ldots,x_{k,0})\in D$ such that 
\begin{equation}\label{53}
M(x_{1,0},\ldots,x_{k,0})=(x_{1},\ldots,x_{k}).
\end{equation}

{From the construction of geometrical decomposition (i.e. the argument of implicit function theorem), we have
$$|x_{i,0}-x_i(0)|\ll1,$$
for all $i=1,\ldots,k$. Together with \eqref{1131}, we have for all $i\in\{1,\ldots,k\}$,
\begin{equation}\label{52}
|x_{i,0}-x_i(T)|\leq b_c^{-2}.
\end{equation}

We then introduce the following topological lemma, which is a corollary of the \emph{Brouwer's fixed point theorem}, \cite{Bro}.  

\begin{lemma}\label{TL}
Let $f$ be a continuous map from $\mathbb{R}^k$ to $\mathbb{R}^k$, and $T_r=[-r,r]^k\subset \mathbb{R}^k$ be a cube centered at $0$, for some $r>0$. Suppose we have for all $y\in\partial T_r$,
\begin{equation}\label{51}
|f(y)-y|<r,
\end{equation}
then there exists a $y_0\in T_r$ such that $f(y_0)=0$.
\end{lemma}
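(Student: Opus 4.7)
The plan is to reduce the problem to Brouwer's fixed point theorem via the auxiliary map $g(y)=y-f(y)$, so that finding a zero of $f$ in $T_r$ is the same as finding a fixed point of $g$. The difficulty is that $g$ need not send $T_r$ into itself, so Brouwer cannot be applied to $g$ directly; I will therefore compose $g$ with the nearest-point projection $P:\mathbb{R}^k\to T_r$ (well-defined and continuous because $T_r$ is closed and convex) and apply Brouwer to the continuous self-map $\tilde g:=P\circ g:T_r\to T_r$. This yields some $y_0\in T_r$ with $\tilde g(y_0)=y_0$; the actual content of the proof will be to verify that at this $y_0$ the projection is inactive, i.e.\ $g(y_0)=y_0$, which is exactly $f(y_0)=0$.

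The verification will split according to whether $y_0\in\partial T_r$ or $y_0\in\operatorname{int}(T_r)$. If $y_0\in\partial T_r$, hypothesis \eqref{51} gives $|g(y_0)|=|y_0-f(y_0)|<r$, hence $|g(y_0)|_\infty\le |g(y_0)|<r$, so $g(y_0)\in\operatorname{int}(T_r)$ and $P(g(y_0))=g(y_0)$, whence $g(y_0)=\tilde g(y_0)=y_0$. If instead $y_0\in\operatorname{int}(T_r)$, then $\tilde g(y_0)=y_0$ lies in $\operatorname{int}(T_r)$; but the image of $P$ can lie on $\partial T_r$ only when the argument of $P$ lies outside $T_r$, so we must have $g(y_0)\in T_r$ and $P(g(y_0))=g(y_0)=y_0$. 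In both cases $f(y_0)=0$, concluding the proof.

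The main (minor) obstacle will be the case $y_0\in\partial T_r$: one has to be careful that \eqref{51} is imposed in a norm compatible with the cube geometry. This is handled by the trivial inequality $|\cdot|_\infty\le |\cdot|$ (assuming $|\cdot|$ is the Euclidean norm, or any norm dominating $|\cdot|_\infty$), which forces $g(y_0)$ into the open cube and eliminates the possibility that the projection moves $g(y_0)$. If one prefers, an alternative route of equal length is to use degree theory directly: the linear homotopy $H(y,t)=ty+(1-t)f(y)$ from $f$ to the identity avoids $0$ on $\partial T_r$ because $|H(y,t)|\ge|y|-(1-t)|y-f(y)|>r-(1-t)r=tr$ for $t>0$ and $|f(y)|\ge |y|-|y-f(y)|>0$ for $t=0$, giving $\deg(f,T_r,0)=\deg(\mathrm{id},T_r,0)=1\neq 0$, hence $f$ has a zero in $T_r$. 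I would present the projection argument as primary since the statement of the lemma advertises it as a corollary of Brouwer.
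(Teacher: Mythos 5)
Your proof is correct, and it takes a genuinely different route from the paper. You reduce the statement directly to Brouwer's fixed point theorem by applying it to the self-map $\tilde g=P\circ(\mathrm{id}-f)$ of $T_r$, where $P$ is the nearest-point projection onto the cube, and then checking in both cases ($y_0\in\partial T_r$, using \eqref{51} together with $|\cdot|_\infty\le|\cdot|$, and $y_0\in\operatorname{int}(T_r)$, using that $P(z)\in\partial T_r$ whenever $z\notin T_r$) that the projection is inactive at the fixed point; both case analyses are sound, and the norm issue you flag is real but harmless in the paper's application, where the displacement bound \eqref{52} is $b_c^{-2}\ll r=b_c^{-3}$ with enough room to absorb the factor $\sqrt{k}$. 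The paper instead argues by contradiction: assuming $f\neq 0$ on $T_r$, it radially projects $f$ onto $\partial T_r$ to get a map $g:T_r\to\partial T_r$ whose restriction to $\partial T_r$ is homotopic to the identity via the linear homotopy $tg(y)+(1-t)y$ (nonvanishing precisely because of \eqref{51}), and then derives a contradiction from $H_{k-1}(T_r)=0$ versus $H_{k-1}(\partial T_r)=\mathbb{Z}$ --- essentially a no-retraction/degree argument. Your secondary degree-theoretic remark, using the same linear homotopy from $f$ to the identity, is in fact the closest analogue of what the paper does; your primary argument has the advantage of invoking Brouwer as a black box rather than re-running a homology computation, at the mild cost of introducing the projection $P$ and its elementary properties.
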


\begin{proof}[Proof of Lemma \ref{TL}]
Suppose for all $y\in T_r$, $f(y)\not=0$. Then we can define a map $g$ from $T_r$ to $\partial T_r$ as following:
$$g(y)=\partial T_r\cap \{tf(y)|t\geq0\}.$$
It is easy to check that $g$ is well-defined and continuous. The assumption \eqref{51} ensures that for all $y\in \partial T_r$, and $t\in[0,1]$, we have
$$tg(y)+(1-t)y\not=0,$$
which implies that $g|_{\partial T_r}$ is homotopic to ${\rm Id}_{\partial T_r}$. Indeed we can consider the following map:
\begin{gather*}
G:\; [0,1]\times \partial T_r\rightarrow \partial T_r,\\
(t,x)\mapsto \partial T_r\cap \{s[tg(y)+(1-t)y]|s\geq0\}.
\end{gather*} 
It is easy to check that $G$ is well-defined and continuous. Moreover, we have $G(0,y)=y$, $G(1,y)=g(y)$, for all $y\in\partial T_r$. Thus $g|_{\partial T_r}$ is homotopic to ${\rm Id}_{\partial T_r}$.

Then the homeomorphism of the homology groups induced by $g$ (i.e. $g_*$: $H_*(T_r)\rightarrow H_*(\partial T_r)$) is surjective. But this is a contradiction, since $H_{k-1}(T_r)=0$, $H_{k-1}(\partial T_r)=\mathbb{Z}$. Therefore, we conclude the proof of Lemma \ref{TL}.
\end{proof}

Now we apply Lemma \ref{TL} to $f=M$, and $T_r=D$ with $r=b_c^{-3}$. From \eqref{52}, we can see that condition \eqref{51} is satisfied. Then we obtain \eqref{53}, which concludes the proof of Theorem \ref{MT} under the assumption of \eqref{LD}.\\
 
\noindent{\bf Step 2.} Now for arbitrarily $k$ pairwise distinct points $\{x_1,\ldots,x_k\}$, choose $\underline{\lambda}>0$, such that
$$\min_{1\leq i\not=j\leq k}|\underline{\lambda}x_i-\underline{\lambda}x_j|\geq b_c^{-120}.$$

Now from the above arguments, there exists a solution $v(t)$ blowing up in finite time $T_v<+\infty$, whose blow-up set is $\{\underline{\lambda}x_1,\ldots,\underline{\lambda}x_k\}$. Moreover, for $t$ close to $T_v$, there exist $\lambda_{j,v}(t)$ and $\tilde{v}(t,x)$ such that
\begin{gather*}
v(t,x)=\sum_{j=1}^k\frac{1}{\lambda^{\frac{2}{p-1}}_{j,v}(t)}\mathcal{Q}_p\bigg(\frac{x-\underline{\lambda}x_j}{\lambda_{j,v}(t)}\bigg)+\tilde{v}(t,x),\\
\frac{\lambda^3_{j,v}(t)}{T_v-t}\sim 3b_c, \quad \lambda_{j,v}(t)^{1-\sigma_c}\|\tilde{v}_x(t)\|_{L^2}\leq \delta(p).
\end{gather*}

Then we let
$$u(t,x)=\underline{\lambda}^{\frac{2}{p-1}}v(\underline{\lambda}^3t,\underline{\lambda}x).$$
It is easy to see from Remark \ref{symmetry} that $u(t)$ is a solution to \eqref{CP} blowing up in finite time $T_u=\underline{\lambda}^{-3}T_v<+\infty$. And its blow-up set is exactly $\{x_1,\ldots,x_k\}$. Moreover, $u(t)$ satisfies \eqref{10001}, \eqref{10002} and \eqref{10003} with
$$\lambda_j(t)=\frac{\lambda_{j,v}(\underline{\lambda}^3t)}{\underline{\lambda}},\quad\tilde{u}(t,x)=\underline{\lambda}^{\frac{2}{p-1}}\tilde{v}(\underline{\lambda}^3t,\underline{\lambda}x),\quad \text{for all $t$ close to }T_u=\underline{\lambda}^{-3}T_v.$$
Therefore, we conclude the proof of Theorem \ref{MT}.}

\bibliographystyle{amsplain}
\bibliography{ref}

\end{document}